\pgfplotsset{compat=newest}
\newtheorem{theorem}{Theorem}[section]
\newtheorem{lemma}[theorem]{Lemma}
\newtheorem{proposition}[theorem]{Proposition}
\newtheorem{property}[theorem]{Property}
\newtheorem{remark}[theorem]{Remark}
\def\beq{\begin{equation}\displaystyle}
\def\eeq{\end{equation}}
\def\bel{\begin{equation} \displaystyle \begin{array}{l} }
\def\eel{\end{array} \end{equation} }
\def\bell{\begin{equation} \displaystyle \begin{array}{ll}  }
\def\eell{\end{array} \end{equation} }
\def\bea{\begin{eqnarray}}
\def\eea{\end{eqnarray} }
\def\bean{\begin{eqnarray*}}
\def\eean{\end{eqnarray*} }
\def\NN{\mathbb{N}}
\def\RR{\mathbb{R}}
\def\ds{\displaystyle}
\def\bs{\bigskip}
\def\eps{\varepsilon}
\def\bar#1{{\overline #1}}
\def\pa{\partial}
\def\1e{\mathds{1}}
\definecolor{mygreen}{rgb}{0,0.7,0}
\begin{document}

\title{Optimal control strategies for the sterile mosquitoes technique}
\author{Luis Almeida
\footnote{Sorbonne Universit{\'e}, CNRS, Universit\'{e} de Paris, Inria, Laboratoire Jacques-Louis Lions UMR7598, F-75005 Paris, France ({\tt luis.almeida@sorbonne-universite.fr})} \and
Michel Duprez\footnote{Inria,  \'equipe MIMESIS, Universit\'e de Strasbourg, Icube, CNRS UMR 7357, Strasbourg, France ({\tt michel.duprez@inria.fr}).}
\and Yannick Privat\footnote{IRMA, Universit\'e de Strasbourg, CNRS UMR 7501, Inria, 7 rue Ren\'e Descartes, 67084 Strasbourg, France ({\tt yannick.privat@unistra.fr}).} 
\and Nicolas Vauchelet\footnote{Laboratoire Analyse, G\'eom\'etrie et Applications CNRS UMR 7539, Universit\'e Sorbonne Paris Nord, Villetaneuse, France ({\tt vauchelet@math.univ-paris13.fr}).}
}

\maketitle

\begin{abstract}
 Mosquitoes are responsible for the transmission of many diseases such as dengue fever, zika or chigungunya. One way to control the spread of these diseases is to use the sterile insect technique (SIT), which consists in a massive release of sterilized male mosquitoes. This strategy aims at reducing the total population over time, and has the advantage being specific to the targeted species, unlike the use of pesticides.
In this article, we study the optimal release strategies in order to maximize the efficiency of this technique.
We consider simplified models that describe the dynamics of eggs, males, females and sterile males in order to optimize the release protocol. We determine in a precise way optimal strategies, which allows us to tackle numerically the underlying optimization problem in a very simple way. We also present some numerical simulations to illustrate our results.
\end{abstract}



\noindent {\bf Keywords: } Sterile insect technique, population dynamics, optimal control problem, Pontryagin Maximum Principle (PMP).

\noindent {\bf 2010 AMS subject classifications: } 92D25, 49K15, 65K10

\bs


\section{Introduction}

The sterile insect technique (SIT) consists in massively releasing sterilized males in the area where one wishes to reduce the development of certain insects (mosquitoes in this case). Since the released sterile males mate with females, the number of offspring is then reduced, and the size of the insect population diminishes. This strategy was first studied by R. Bushland and E. Knipling and applied successfully in the early 1950's by nearly eradicating screw-worm fly in North America.
Since then, this technique has been considered for different pests and disease vectors \cite{Barclay,SIT}.

Among such vectors, mosquitoes (including Aedes mosquitoes) are responsible for the transmission to humans of many diseases for which there is currently no efficient vaccine nor treatment.
Thus, the sterile insect technique and the closely related incompatible insect technique are very promising tools to control the spread of such diseases by reducing the size of the vector population (there are cases where these techniques have been successfully used to drastically reduce mosquito populations in some isolated regions, e.g. \cite{Bossinetal, Zheng}).

In order to study the efficiency of this technique and to optimize it, mathematical modeling is of great use.
For instance, in \cite{Anguelov,Dumont1,Dumont2,rollingcarpet}, the authors propose mathematical models to study the dynamics of the mosquito population when releasing sterile males. Recently in \cite{bossin}, the authors propose and analyze a differential system modeling the mosquito population dynamics.
Their model is based on experimental observations and  is constructed by assuming that there is a strong Allee effect in the insect population dynamics. A similar model, without strong Allee effect, is investigated in \cite{Anguelov2020}.
Control theory also allows to study the feasibility of controlling the population thanks to the sterile insect technique, and it has been studied in several works, see e.g. \cite{bliman2,bliman3,Aronna}.
Using such mathematical models, authors are able to compare the impact of different strategies in releasing sterile mosquitoes (see e.g. \cite{Cai,LiYuan} and \cite{Dumont2,Huang} where periodic impulsive releases are considered).

In order to find the best possible release protocol, optimal control theory may be used. In \cite{Esteva}, optimal control methods are applied to the rate of introduction of sterile mosquitoes.
An approach developed in \cite{Thome} attempts to control both breeding rates and the rate of introduction of sterile mosquitoes.
In \cite{Fister}, the influence of habitat modification is also considered.
Finally, existence and numerical simulations of the solution to an optimal control problem for the SIT has been proposed in \cite{bliman4}.

In this paper, we study how to optimize the release protocol in order to minimize certain cost functionals, such as the number of mosquitoes.
Starting with the mathematical model presented in \cite{bossin} without Allee effect, we investigate some optimal control problems and focus on obtaining a precise description of the optimal control. To do so, we consider a simplified version of the mathematical model and we perform a complete study of the optimizers.
In particular, in our main result, we describe precisely the optimal release function to minimize the number of sterile males needed to reach a given size of the population of mosquitoes.
Our theoretical results are illustrated with some numerical simulations.
We also provide some extensions to certain related optimization problems in order to illustrate the robustness of our approach.

The outline of this paper is as follow.
In Section \ref{sec:model}, we introduce the mathematical model we will adopt for the sterile insect technique, and describe some useful qualitative properties related to stability issues. For the sake of readability, all the proofs will be postponed to Appendix~\ref{sec:mathpropdynsys}.
Section~\ref{sec:opt} is devoted to the introduction of the problems modeling the search of optimal release protocols and the statement of the main theoretical results providing a precise description of optimal strategies. We then derive a simple algorithm to compute them numerically and provide illustrating simulations.
The proofs of the main theoretical results are postponed to Section~\ref{sec:proof main results}.
Finally, some comments on other possible approaches are gathered in Section~\ref{sec:otherpb}.

\section{Mathematical modelling}\label{sec:model}

\subsection{Mosquito life cycle}

The life cycle of a mosquito (male or female) consists of several stages and takes place successively in two distinct environments:
it includes an aquatic phase (egg, larva, pupa) and an aerial phase (adult).
A few days after mating, a female mosquito may lay a few dozen eggs, possibly spread over several breeding sites.
Once laid, the eggs of some species can withstand hostile environments (including adverse weather conditions) for up to several months before hatching. This characteristic contributes to the adaptability of mosquitoes and has enabled them to colonize temperate regions.
After stimulation (e.g. rainfall), the eggs hatch to give birth to larvae that develop in the water and reach the pupal state. This larval phase can last from a few days to a few weeks. Then, the insect undergoes its metamorphosis. The pupa (also called {\it nymph}) remains in the aquatic state for 1 to 3 days and then becomes an adult mosquito (or {\it imago}): it is the emergence and the beginning of the aerial phase. The lifespan of an adult mosquito is estimated to be of a few weeks.

In many species, egg laying is only possible after a blood meal, i.e. the female must bite a vertebrate before each egg laying. This behavior, called hematophagy, can be exploited by infectious agents (bacteria, viruses or parasites) to spread, alternately from a vertebrate host (humans, for what we are interested in here) to an arthropod host (here, the mosquito).

Based on these observations, a compartmental model has been introduced in \cite{bossin} to model the life cycle of mosquitoes when releasing sterile mosquitoes. 
In what follows, we will both deal with the full and a simplified version of \cite{bossin}. The reason for studying such a simplified model is twofold: on the one hand, the simplified model can be considered relevant from a biological point of view within certain limits. On the other hand, the study of such a ``prototype" model can be considered as a first step towards the development of robust control methodologies with a wider application.

To this aim, we will denote by $u(\cdot)$ a control function standing for a sterile male release function (in other words the rate of sterile male mosquitoes release at each time) and by
\begin{itemize}
\item $M_s(t)$, the sterilized adult males at time $t$;
\item $F(t)$, the adult females that have been fertilized at time $t$.
\end{itemize}
The system we will use for describing the behavior of the mosquito population under the action of the control $u(\cdot)$ reads
\begin{equation}
  \left\{
  \begin{aligned}
&\frac{dF}{dt} = f(F,M_s),  \\
&\frac{dM_s}{dt} = u - \delta_s M_s,
  \end{aligned}
  \right.
  \label{eq:primal2}
  \tag{\mbox{$\mathcal{S}_1$}}
\end{equation}
where $f:\RR^2\to \RR$ denotes the nonlinear function 
\begin{equation}\label{eq:f1}
  f(F,M_s) =  \frac{\nu(1-\nu)\beta_E^2 \nu_E^2 F^2}{\big(\frac{\beta_E F}{K} + \nu_E+\delta_E \big) \big((1-\nu)\nu_E \beta_E F+\delta_M \gamma_s M_s(\frac{\beta_E F}{K} + \nu_E+\delta_E)\big)} - \delta_F F,
\end{equation}
with the following parameter choices:
\begin{itemize}
\item $\beta_E>0$ is the oviposition rate;
\item $\delta_E, \delta_M, \delta_F, \delta_s >0$ are the death rates for eggs, adult males, females, and sterile males respectively;
\item $\nu_E>0$ is the hatching rate for eggs;
\item $\nu\in (0,1)$ the probability that a pupa gives rise to a female, and $(1-\nu)$ is therefore the probability to give rise to a male;
\item 
 $K>0$ is the environmental capacity for eggs. It can be interpreted as the maximum density of eggs that females can lay in breeding sites;
\item $\gamma_s>0$ accounts for the fact that females may have a preference for fertile males. Then, the probability that a female mates with a fertile male is $\frac{M}{M+\gamma_s M_s}$.
\end{itemize}

In the following section, we explain and comment on the choice of this system.

\subsection{Derivation of the simplified model and presentation of the original one}
The choice of \eqref{eq:primal2} as model is inspired by \cite{bossin}. 
To explain how it has been derived, let us present the more involved model we have considered. 
Let us introduce:
\begin{itemize}
\item $E(t)$, the mosquito density in aquatic phase at time $t$;
\item $M(t)$, the adult male density at time $t$;
\item $M_s(t)$, the sterilized adult male density at time $t$;
\item $F(t)$, the density of adult females that has been fertilized at time $t$.
\end{itemize}
Then, the dynamics of the mosquito population is driven by the following dynamical system:
\begin{equation}
  \left\{
    \begin{aligned}
&\frac{dE}{dt}  = \beta_E F \left(1-\frac{E}{K}\right) - \big( \nu_E + \delta_E \big) E,  \\
&\frac{dM}{dt} = (1-\nu)\nu_E E - \delta_M M,  \\
&\frac{dF}{dt} = 
\nu\nu_E E \frac{M}{M+\gamma_s M_s} - \delta_F F, \\
&\frac{dM_s}{dt}  = u - \delta_s M_s.
\end{aligned}
  \right.
  \label{eq:S1}\tag{\mbox{$\mathcal{S}_2$}}
\end{equation}
Regarding this latter model, the main difference with the one in \cite{bossin} is the absence of an exponential term in the equation on $F$ to introduce an Allee effect. This effect reflects the fact that, when the population density is very low, it can be difficult to find a partner to mate. This term is important when considering a small population size. Here, since we are focusing on large populations that we want to reduce in size, we will neglect this term.

Assuming that the time dynamics of the mosquitoes in aquatic phase and the adult males compartments are fast leads to assume that the equations on $E(\cdot)$ and $M(\cdot)$ are at equilibrium. We refer for instance to \cite{MBE} for additional explanations on the justification for these asymptotics.
Hence, we get the following equalities 
\begin{equation*}
E  = \frac{\beta_E F}{\frac{\beta_E F}{K} + \nu_E+\delta_E}\quad \mbox{ and }\quad M  = \frac{(1-\nu)\nu_E}{\delta_M} E \ .
\end{equation*}
Plugging such expressions into \eqref{eq:S1} allows us to obtain \eqref{eq:primal2}.


We conclude this paragraph by numerically comparing the full model \eqref{eq:S1} and the simplified one \eqref{eq:primal2} that we will aim to control.
We consider the numerical values taken from \cite[Table 3]{bossin} and recalled in Table \ref{tab:steril} below.

\begin{table}[H]
\centering
\begin{tabular}{|c|c|c|c|c|}
\hline
\textit{Parameter}&\textit{Name}&\textit{Value interval}&\textit{Chosen value}&\textit{{Unit}}\\\hline
 $\beta_E$&Effective fecundity&7.46--14.85&10  &{Day$^{-1}$}\\\hline
 $\gamma_s$&\begin{tabular}{c}Mating competitiveness\\of sterilizing males\end{tabular}&0--1& 1  &{-}\\ \hline
 $\nu_E$&Hatching parameter&0.005--0.25&
&{Day$^{-1}$}\\\hline
 $\delta_E$&\begin{tabular}{c}Mosquitoes in aquatic phase\\ death rate\end{tabular}&0.023 - 0.046&0.03&{Day$^{-1}$}\\\hline
 $\delta_F$&Female death rate&0.033 - 0.046&0.04&{Day$^{-1}$}\\\hline
  $\delta_M$&Males death rate&0.077 - 0.139&0.1&{Day$^{-1}$}\\\hline
 $\delta_s$&{Sterilized} male  death rate&& 0.12 &{Day$^{-1}$}\\\hline
    $\nu$&Probability of emergence&& 0.49 &{-}\\\hline
\end{tabular}
\caption{Value intervals of the parameters for systems \eqref{eq:S1} and \eqref{eq:primal2} (see \cite{bossin})}
\label{tab:steril}
\end{table}

The numerical results are shown in Fig.~\ref{fig_compar}. In these simulations, the time of the experiment is assumed to be $T=70$ days, and we choose two different release functions $u$: 
$$
u({t})=15\,000\text{ (left),}\quad \text{and}\quad u({t})=20\,000 \sum_{k=0}^6\mathds{1}_{[10k,10k+1]}{(t)}\text{ (right)}.
$$
{In what follows, for a subset $A$ of a given set $E$, the notation $\mathds{1}_{A}$ stands for the characteristic function of $A$. Namely, for each $x\in E$, $\mathds{1}_{A}(x)$ is equal to $1$ if $x\in A$ and $0$ otherwise.}
The dynamics for $F$ (female compartment) is represented for both systems \eqref{eq:S1} (blue, continuous line) and \eqref{eq:primal2} (red, dashed line).
We observe that both problems are very close, which indicates that the dynamics of fertilized females in system \eqref{eq:S1} may be approximated by the one in system \eqref{eq:primal2}.

A mathematical element of this observation lies in the fact that the equilibria {of} Systems \eqref{eq:S1} and \eqref{eq:primal2}  coincide. When dealing with optimal control properties, we will also numerically observe in Section~\ref{sec:num} that this simplification does not affect the optimal strategies in a strong way.

\begin{figure}
~\hfill\begin{tikzpicture}[thick,scale=0.75, every node/.style={scale=1.0}] \begin{axis}[xlabel=t,
,legend pos=north east, legend columns=1]
 \addplot[color=red,dashed,thick]coordinates { 
(0.0,11037.970588235292)
(0.35175879396984927,10909.290434944363)
(0.7035175879396985,10756.875924850963)
(1.0552763819095479,10606.587612598749)
(1.407035175879397,10458.397751343813)
(1.7587939698492463,10312.277633219826)
(2.1105527638190957,10168.198612321403)
(2.462311557788945,10026.132315992802)
(2.814070351758794,9886.050709479076)
(3.1658291457286434,9747.926118958656)
(3.5175879396984926,9611.731239529568)
(3.869346733668342,9477.439136841405)
(4.2211055276381915,9345.02324573944)
(4.572864321608041,9214.45736739309)
(4.92462311557789,9085.715665614813)
(5.276381909547739,8958.77266273354)
(5.628140703517588,8833.603235221815)
(5.9798994974874375,8710.182609190855)
(6.331658291457287,8588.48635582177)
(6.683417085427136,8468.490386775014)
(7.035175879396985,8350.170949604766)
(7.386934673366834,8233.50462319551)
(7.738693467336684,8118.468313232208)
(8.090452261306533,8005.039247711659)
(8.442211055276383,7893.194972500098)
(8.793969849246231,7782.913346940439)
(9.145728643216081,7674.172539511392)
(9.49748743718593,7566.951023539877)
(9.84924623115578,7461.2275729676)
(10.201005025125628,7356.9812581722435)
(10.552763819095478,7254.191441843421)
(10.904522613065327,7152.837774913361)
(11.256281407035177,7052.900192542092)
(11.608040201005027,6954.358910156806)
(11.959798994974875,6857.194419544988)
(12.311557788944725,6761.387485000832)
(12.663316582914574,6666.9191395244225)
(13.015075376884424,6573.7706810731115)
(13.366834170854272,6481.92366886452)
(13.718592964824122,6391.359919730549)
(14.07035175879397,6302.061504521794)
(14.42211055276382,6214.010744561726)
(14.773869346733669,6127.190208150028)
(15.125628140703519,6041.582707114444)
(15.477386934673367,5957.171293410505)
(15.829145728643217,5873.939255768521)
(16.180904522613066,5791.870116387195)
(16.532663316582916,5710.947627673237)
(16.884422110552766,5631.155769026377)
(17.236180904522616,5552.478743669139)
(17.587939698492463,5474.900975520782)
(17.939698492462313,5398.407106114809)
(18.291457286432163,5322.981991559442)
(18.643216080402013,5248.610699540469)
(18.99497487437186,5175.278506365899)
(19.34673366834171,5102.970894051829)
(19.69849246231156,5031.67354744897)
(20.05025125628141,4961.372351409256)
(20.402010050251256,4892.053387992007)
(20.753768844221106,4823.702933709061)
(21.105527638190956,4756.307456808372)
(21.457286432160807,4689.8536145955195)
(21.809045226130653,4624.328250792603)
(22.160804020100503,4559.718392934018)
(22.512562814070353,4496.011249798581)
(22.864321608040203,4433.194208877514)
(23.216080402010054,4371.2548338777815)
(23.5678391959799,4310.18086226029)
(23.91959798994975,4249.9602028124655)
(24.2713567839196,4190.580933254727)
(24.62311557788945,4132.031297880392)
(24.974874371859297,4074.299705228537)
(25.326633165829147,4017.374725789361)
(25.678391959798997,3961.2450897416006)
(26.030150753768847,3905.8996847215412)
(26.381909547738694,3851.327553623194)
(26.733668341708544,3797.517892429196)
(27.085427135678394,3744.460048072008)
(27.437185929648244,3692.14351632499)
(27.78894472361809,3640.5579397229326)
(28.14070351758794,3589.6931055116356)
(28.49246231155779,3539.5389436261294)
(28.84422110552764,3490.0855246971378)
(29.19597989949749,3441.323058085389)
(29.547738693467338,3393.2418899433874)
(29.899497487437188,3345.8325013042563)
(30.251256281407038,3299.085506197285)
(30.603015075376888,3252.9916497897975)
(30.954773869346734,3207.541806554978)
(31.306532663316585,3162.7269784652935)
(31.658291457286435,3118.5382932111556)
(32.01005025125628,3074.967002444463)
(32.36180904522613,3032.004480046688)
(32.71356783919598,2989.642220421153)
(33.06532663316583,2947.8718368091677)
(33.41708542713568,2906.6850596296886)
(33.76884422110553,2866.073734842176)
(34.12060301507538,2826.029822332321)
(34.47236180904523,2786.545394320328)
(34.824120603015075,2747.6126337914284)
(35.175879396984925,2709.223832948328)
(35.527638190954775,2671.3713916852703)
(35.879396984924625,2634.0478160834205)
(36.231155778894475,2597.245716927269)
(36.582914572864325,2560.957808241763)
(36.934673366834176,2525.176905849875)
(37.286432160804026,2489.8959259503245)
(37.63819095477387,2455.107883715168)
(37.98994974874372,2420.805891906986)
(38.34170854271357,2386.983159515384)
(38.69346733668342,2353.6329904125496)
(39.04522613065327,2320.7487820275855)
(39.39698492462312,2288.3240240393716)
(39.74874371859297,2256.3522970876816)
(40.10050251256282,2224.8272715023127)
(40.45226130653267,2193.742706049968)
(40.80402010050251,2163.092446698647)
(41.15577889447236,2132.8704253992996)
(41.50753768844221,2103.070658884501)
(41.85929648241206,2073.6872474839115)
(42.21105527638191,2044.7143739562857)
(42.56281407035176,2016.1463023378003)
(42.91457286432161,1987.9773768064713)
(43.26633165829146,1960.2020205624378)
(43.618090452261306,1932.8147347238896)
(43.969849246231156,1905.81009723842)
(44.321608040201006,1879.1827618095886)
(44.67336683417086,1852.9274568384812)
(45.02512562814071,1827.0389843800588)
(45.37688442211056,1801.5122191140858)
(45.72864321608041,1776.3421073304382)
(46.08040201005026,1751.5236659285865)
(46.43216080402011,1727.051981431059)
(46.78391959798995,1702.9222090106866)
(47.1356783919598,1679.1295715314404)
(47.48743718592965,1655.6693586026686)
(47.8391959798995,1632.5369256465483)
(48.19095477386935,1609.7276929785653)
(48.5427135678392,1587.2371449008422)
(48.89447236180905,1565.060828808133)
(49.2462311557789,1543.1943543063094)
(49.597989949748744,1521.6333923431616)
(49.949748743718594,1500.373674351345)
(50.301507537688444,1479.4109914033002)
(50.653266331658294,1458.7411933779797)
(51.005025125628144,1438.3601881392185)
(51.356783919597994,1418.2639407255815)
(51.708542713567844,1398.4484725515326)
(52.060301507537694,1378.909860619763)
(52.412060301507545,1359.6442367445254)
(52.76381909547739,1340.6477867858187)
(53.11557788944724,1321.916749894273)
(53.46733668341709,1303.4474177665847)
(53.81909547738694,1285.2361339113545)
(54.17085427135679,1267.2792929251814)
(54.52261306532664,1249.5733397788733)
(54.87437185929649,1232.1147691136282)
(55.22613065326634,1214.90012454705)
(55.57788944723618,1197.92599798886)
(55.92964824120603,1181.1890289661692)
(56.28140703517588,1164.6859039581775)
(56.63316582914573,1148.4133557401683)
(56.98492462311558,1132.3681627366686)
(57.33668341708543,1116.5471483836468)
(57.68844221105528,1100.9471804996201)
(58.04020100502513,1085.5651706655515)
(58.39195979899498,1070.3980736134074)
(58.743718592964825,1055.4428866232604)
(59.095477386934675,1040.6966489288143)
(59.447236180904525,1026.1564411312359)
(59.798994974874375,1011.8193846211752)
(60.150753768844226,997.6826410088626)
(60.502512562814076,983.7434115621666)
(60.854271356783926,969.9989366525047)
(61.206030150753776,956.4464952084946)
(61.55778894472362,943.0834041772397)
(61.90954773869347,929.9070179931398)
(62.26130653266332,916.9147280541256)
(62.61306532663317,904.1039622052091)
(62.96482412060302,891.4721842292508)
(63.31658291457287,879.0168933448416)
(63.66834170854272,866.7356237111993)
(64.02010050251256,854.6259439399839)
(64.37185929648241,842.6854566139325)
(64.72361809045226,830.9117978122202)
(65.07537688442211,819.3026366424526)
(65.42713567839196,807.8556747791966)
(65.77889447236181,796.5686460089595)
(66.13065326633166,785.4393157815248)
(66.48241206030151,774.4654807675571)
(66.83417085427136,763.6449684223876)
(67.18592964824121,752.9756365558947)
(67.53768844221106,742.4553729083937)
(67.88944723618091,732.0820947324523)
(68.24120603015076,721.853748380549)
(68.59296482412061,711.7683088984916)
(68.94472361809046,701.8237796245181)
(69.2964824120603,692.0181917939974)
(69.64824120603015,682.3496041496535)
(70.0,672.8161025572356)
 };

  \addplot[color=blue]coordinates { 

(0.0,11037.970588235292)
(0.35175879396984927,10909.290421346795)
(0.7035175879396985,10756.875888319533)
(1.0552763819095479,10606.587552686364)
(1.407035175879397,10458.397668275142)
(1.7587939698492463,10312.27752737035)
(2.1105527638190957,10168.198484095201)
(2.462311557788945,10026.132165782778)
(2.814070351758794,9886.050537652784)
(3.1658291457286434,9747.92592585377)
(3.5175879396984926,9611.731025453366)
(3.869346733668342,9477.438902071968)
(4.2211055276381915,9345.022990527554)
(4.572864321608041,9214.457091964357)
(4.92462311557789,9085.715370171762)
(5.276381909547739,8958.772347457618)
(5.628140703517588,8833.60290027522)
(5.9798994974874375,8710.182254718204)
(6.331658291457287,8588.485981951588)
(6.683417085427136,8468.489993621073)
(7.035175879396985,8350.170537267275)
(7.386934673366834,8233.504191762171)
(7.738693467336684,8118.467862779169)
(8.090452261306533,8005.038778304349)
(8.442211055276383,7893.1944841939785)
(8.793969849246231,7782.912839781678)
(9.145728643216081,7674.172013537462)
(9.49748743718593,7566.950478780096)
(9.84924623115578,7461.227009443619)
(10.201005025125628,7356.9806758984805)
(10.552763819095478,7254.19084082746)
(10.904522613065327,7152.837155156314)
(11.256281407035177,7052.899554038922)
(11.608040201005027,6954.35825289663)
(11.959798994974875,6857.193743511348)
(12.311557788944725,6761.3867901719495)
(12.663316582914574,6666.9184258734285)
(13.015075376884424,6573.769948568262)
(13.366834170854272,6481.922917469397)
(13.718592964824122,6391.359149404247)
(14.07035175879397,6302.060715219089)
(14.42211055276382,6214.009936233244)
(14.773869346733669,6127.1893807423985)
(15.125628140703519,6041.581860570442)
(15.477386934673367,5957.170427669192)
(15.829145728643217,5873.938370765376)
(16.180904522613066,5791.869212054237)
(16.532663316582916,5710.946703939152)
(16.884422110552766,5631.154825816628)
(17.236180904522616,5552.477780906077)
(17.587939698492463,5474.8999931237595)
(17.939698492462313,5398.4061040002825)
(18.291457286432163,5322.980969641072)
(18.643216080402013,5248.609657729224)
(18.99497487437186,5175.277444570151)
(19.34673366834171,5102.96981217745)
(19.69849246231156,5031.672445399426)
(20.05025125628141,4961.371229085704)
(20.402010050251256,4892.052245293383)
(20.753768844221106,4823.701770532175)
(21.105527638190956,4756.306273047998)
(21.457286432160807,4689.852410144483)
(21.809045226130653,4624.327025541875)
(22.160804020100503,4559.717146772801)
(22.512562814070353,4496.009982614403)
(22.864321608040203,4433.192920556315)
(23.216080402010054,4371.253524304003)
(23.5678391959799,4310.179531316967)
(23.91959798994975,4249.958850381315)
(24.2713567839196,4190.579559216239)
(24.62311557788945,4132.029902113921)
(24.974874371859297,4074.298287612391)
(25.326633165829147,4017.373286200897)
(25.678391959798997,3961.243628057314)
(26.030150753768847,3905.89820081716)
(26.381909547738694,3851.3260473737732)
(26.733668341708544,3797.5163637092096)
(27.085427135678394,3744.4584967554465)
(27.437185929648244,3692.141942285455)
(27.78894472361809,3640.5563428337327)
(28.14070351758794,3589.6914856458843)
(28.49246231155779,3539.537300656843)
(28.84422110552764,3490.0838584973335)
(29.19597989949749,3441.3213685281858)
(29.547738693467338,3393.240176902104)
(29.899497487437188,3345.8307646525145)
(30.251256281407038,3299.08374580911)
(30.603015075376888,3252.9898655397205)
(30.954773869346734,3207.539998318139)
(31.306532663316585,3162.725146117547)
(31.658291457286435,3118.536436629172)
(32.01005025125628,3074.965121505837)
(32.36180904522613,3032.0025746300425)
(32.71356783919598,2989.640290406246)
(33.06532663316583,2947.869882076999)
(33.41708542713568,2906.6830800626085)
(33.76884422110553,2866.071730323992)
(34.12060301507538,2826.0277927484076)
(34.47236180904523,2786.543339557734)
(34.824120603015075,2747.610553738987)
(35.175879396984925,2709.221727496767)
(35.527638190954775,2671.369260727321)
(35.879396984924625,2634.045659513928)
(36.231155778894475,2597.2435346433035)
(36.582914572864325,2560.955600142729)
(36.934673366834176,2525.1746718376226)
(37.286432160804026,2489.893665929259)
(37.63819095477387,2455.105597592361)
(37.98994974874372,2420.803579592285)
(38.34170854271357,2386.980820921524)
(38.69346733668342,2353.6306254552605)
(39.04522613065327,2320.7463906257044)
(39.39698492462312,2288.321606114949)
(39.74874371859297,2256.3498525660925)
(40.10050251256282,2224.824800312363)
(40.45226130653267,2193.740208124003)
(40.80402010050251,2163.0899219726575)
(41.15577889447236,2132.867873813028)
(41.50753768844221,2103.068080381546)
(41.85929648241206,2073.6846420118336)
(42.21105527638191,2044.711741466709)
(42.56281407035176,2016.1436427865153)
(42.91457286432161,1987.9746901535354)
(43.26633165829146,1960.1993067722753)
(43.618090452261306,1932.8119937653896)
(43.969849246231156,1905.8073290850332)
(44.321608040201006,1879.1799664394227)
(44.67336683417086,1852.924634234395)
(45.02512562814071,1827.0361345297533)
(45.37688442211056,1801.5093420101955)
(45.72864321608041,1776.3392029706188)
(46.08040201005026,1751.5207343156017)
(46.43216080402011,1727.0490225728643)
(46.78391959798995,1702.9192229205125)
(47.1356783919598,1679.1265582278718)
(47.48743718592965,1655.6663181097229)
(47.8391959798995,1632.5338579937506)
(48.19095477386935,1609.724598201022)
(48.5427135678392,1587.234023039311)
(48.89447236180905,1565.057679909091)
(49.2462311557789,1543.1911784220176)
(49.597989949748744,1521.6301895317288)
(49.949748743718594,1500.370444676786)
(50.301507537688444,1479.4077349355937)
(50.653266331658294,1458.7379101931213)
(51.005025125628144,1438.3568783192713)
(51.356783919597994,1418.2606043587245)
(51.708542713567844,1398.4451097321048)
(52.060301507537694,1378.9064714483043)
(52.412060301507545,1359.6408213278146)
(52.76381909547739,1340.6443452369083)
(53.11557788944724,1321.9132823325203)
(53.46733668341709,1303.4439243176796)
(53.81909547738694,1285.2326147073434)
(54.17085427135679,1267.2757481044882)
(54.52261306532664,1249.569769486316)
(54.87437185929649,1232.1111735004329)
(55.22613065326634,1214.89650377086)
(55.57788944723618,1197.9223522137422)
(55.92964824120603,1181.1853583626164)
(56.28140703517588,1164.6822087031069)
(56.63316582914573,1148.4096360169162)
(56.98492462311558,1132.3644187349814)
(57.33668341708543,1116.5433802996677)
(57.68844221105528,1100.943388535873)
(58.04020100502513,1085.56135503092)
(58.39195979899498,1070.39423452311)
(58.743718592964825,1055.4390242988234)
(59.095477386934675,1040.6927635980385)
(59.447236180904525,1026.152533028161)
(59.798994974874375,1011.8154539860403)
(60.150753768844226,997.6786880880618)
(60.502512562814076,983.7394366082021)
(60.854271356783926,969.9949399239355)
(61.206030150753776,956.4424769698813)
(61.55778894472362,943.0793646990859)
(61.90954773869347,929.9029575518296)
(62.26130653266332,916.9106469318574)
(62.61306532663317,904.099860689926)
(62.96482412060302,891.4680626145673)
(63.31658291457287,879.0127519299665)
(63.66834170854272,866.7314628008558)
(64.02010050251256,854.6217638443253)
(64.37185929648241,842.6812576484555)
(64.72361809045226,830.9075802976743)
(65.07537688442211,819.2984009047461)
(65.42713567839196,807.8514211493003)
(65.77889447236181,796.5643748228064)
(66.13065326633166,785.435027379907)
(66.48241206030151,774.4611754960198)
(66.83417085427136,763.6406466311205)
(67.18592964824121,752.9712985996201)
(67.53768844221106,742.4510191462524)
(67.88944723618091,732.0777255278863)
(68.24120603015076,721.8493641011819)
(68.59296482412061,711.763909916007)
(68.94472361809046,701.8193663145352)
(69.2964824120603,692.0137645359439)
(69.64824120603015,682.3451633266375)
(70.0,672.8116485559144)

  };

 \legend{$F$ for system \eqref{eq:S1},$F$ for system \eqref{eq:primal2}}
\end{axis} 
\end{tikzpicture}
\hfill
\begin{tikzpicture}[thick,scale=0.75, every node/.style={scale=1.0}] \begin{axis}[xlabel=t,
,legend pos=north east, legend columns=1]
 \addplot[color=red,dashed,thick]coordinates { 

(0.0,11037.970588235292)
(0.35175879396984927,10981.42970667781)
(0.7035175879396985,10884.049470197475)
(1.0552763819095479,10784.740777900717)
(1.407035175879397,10688.27649620718)
(1.7587939698492463,10594.629507314316)
(2.1105527638190957,10503.772660321265)
(2.462311557788945,10415.678621650726)
(2.814070351758794,10330.319750667493)
(3.1658291457286434,10247.667996287753)
(3.5175879396984926,10167.694811274383)
(3.869346733668342,10090.37108161432)
(4.2211055276381915,10015.66706891898)
(4.572864321608041,9943.552364212002)
(4.92462311557789,9873.995851796562)
(5.276381909547739,9806.965682147784)
(5.628140703517588,9742.429252970249)
(5.9798994974874375,9680.35319770883)
(6.331658291457287,9620.703380912983)
(6.683417085427136,9563.444899937957)
(7.035175879396985,9508.542092527416)
(7.386934673366834,9455.958549865654)
(7.738693467336684,9405.657134717925)
(8.090452261306533,9357.600004297617)
(8.442211055276383,9311.748637511722)
(8.793969849246231,9268.063866243296)
(9.145728643216081,9226.505910333233)
(9.49748743718593,9187.034415924869)
(9.84924623115578,9139.949878620393)
(10.201005025125628,9063.796754518049)
(10.552763819095478,8972.981426756825)
(10.904522613065327,8876.502569194463)
(11.256281407035177,8781.04031955737)
(11.608040201005027,8687.872978444502)
(11.959798994974875,8596.990618302103)
(12.311557788944725,8508.383577753613)
(12.663316582914574,8422.04240180535)
(13.015075376884424,8337.95778454249)
(13.366834170854272,8256.120513804102)
(13.718592964824122,8176.521417449968)
(14.07035175879397,8099.151310934421)
(14.42211055276382,8024.000945987808)
(14.773869346733669,7951.060960277493)
(15.125628140703519,7880.32182798018)
(15.477386934673367,7811.773811247664)
(15.829145728643217,7745.406912590435)
(16.180904522613066,7681.210828239165)
(16.532663316582916,7619.174902573708)
(16.884422110552766,7559.288083733857)
(17.236180904522616,7501.538880545943)
(17.587939698492463,7445.915320915185)
(17.939698492462313,7392.40491184558)
(18.291457286432163,7340.9946012575165)
(18.643216080402013,7291.67074177826)
(18.99497487437186,7244.419056682337)
(19.34673366834171,7199.224608157657)
(19.69849246231156,7156.071768069197)
(20.05025125628141,7110.384261650449)
(20.402010050251256,7050.106948047539)
(20.753768844221106,6980.834640133081)
(21.105527638190956,6907.710682370374)
(21.457286432160807,6835.474405132497)
(21.809045226130653,6765.077383684092)
(22.160804020100503,6696.514800687825)
(22.512562814070353,6629.782097967354)
(22.864321608040203,6564.8749395054565)
(23.216080402010054,6501.789174406024)
(23.5678391959799,6440.520799705355)
(23.91959798994975,6381.065922951831)
(24.2713567839196,6323.420724503035)
(24.62311557788945,6267.581419516195)
(24.974874371859297,6213.544219632166)
(25.326633165829147,6161.305294375144)
(25.678391959798997,6110.860732310473)
(26.030150753768847,6062.206502021233)
(26.381909547738694,6015.338412981086)
(26.733668341708544,5970.252076416158)
(27.085427135678394,5926.942866262568)
(27.437185929648244,5885.40588033863)
(27.78894472361809,5845.635901861815)
(28.14070351758794,5807.627361450087)
(28.49246231155779,5771.374299755365)
(28.84422110552764,5736.870330883455)
(29.19597989949749,5704.108606759832)
(29.547738693467338,5673.0817826041275)
(29.899497487437188,5640.25874148931)
(30.251256281407038,5595.9238376796075)
(30.603015075376888,5544.060664360769)
(30.954773869346734,5488.81056803166)
(31.306532663316585,5434.2499885569105)
(31.658291457286435,5381.210764829298)
(32.01005025125628,5329.691234330157)
(32.36180904522613,5279.689974076232)
(32.71356783919598,5231.2057728528225)
(33.06532663316583,5184.237602951033)
(33.41708542713568,5138.7845913658075)
(33.76884422110553,5094.84599042711)
(34.12060301507538,5052.421147851313)
(34.47236180904523,5011.509476213672)
(34.824120603015075,4972.110421855931)
(35.175879396984925,4934.223433255602)
(35.527638190954775,4897.847928895367)
(35.879396984924625,4862.98326468246)
(36.231155778894475,4829.628700978692)
(36.582914572864325,4797.783369312088)
(36.934673366834176,4767.4462388507945)
(37.286432160804026,4738.616082729029)
(37.63819095477387,4711.291444323283)
(37.98994974874372,4685.4706035847175)
(38.34170854271357,4661.151543540644)
(38.69346733668342,4638.331917084154)
(39.04522613065327,4617.009014176196)
(39.39698492462312,4597.179729588706)
(39.74874371859297,4578.840531320743)
(40.10050251256282,4558.655667947782)
(40.45226130653267,4527.443611785661)
(40.80402010050251,4488.976230019768)
(41.15577889447236,4447.167006412837)
(41.50753768844221,4405.863217522833)
(41.85929648241206,4365.851428856281)
(42.21105527638191,4327.132147429116)
(42.56281407035176,4289.706095771862)
(42.91457286432161,4253.5741895324445)
(43.26633165829146,4218.7375144962225)
(43.618090452261306,4185.197303001401)
(43.969849246231156,4152.954909737318)
(44.321608040201006,4122.011786922113)
(44.67336683417086,4092.369458865022)
(45.02512562814071,4064.0294959270086)
(45.37688442211056,4036.9934879016987)
(45.72864321608041,4011.263016846626)
(46.08040201005026,3986.839629402609)
(46.43216080402011,3963.7248086466593)
(46.78391959798995,3941.919945531159)
(47.1356783919598,3921.4263099690747)
(47.48743718592965,3902.245021631713)
(47.8391959798995,3884.3770205318942)
(48.19095477386935,3867.823037471383)
(48.5427135678392,3852.583564436962)
(48.89447236180905,3838.6588250345717)
(49.2462311557789,3826.04874505546)
(49.597989949748744,3814.7529232722454)
(49.949748743718594,3801.7590816506213)
(50.301507537688444,3778.703716537648)
(50.653266331658294,3748.9367430753564)
(51.005025125628144,3716.0191816473116)
(51.356783919597994,3683.4965017166587)
(51.708542713567844,3652.096680623242)
(52.060301507537694,3621.8216227360467)
(52.412060301507545,3592.673437973221)
(52.76381909547739,3564.654423973171)
(53.11557788944724,3537.7670476736785)
(53.46733668341709,3512.013926283699)
(53.81909547738694,3487.397807638393)
(54.17085427135679,3463.921549933696)
(54.52261306532664,3441.5881008424612)
(54.87437185929649,3420.4004760197827)
(55.22613065326634,3400.361737010692)
(55.57788944723618,3381.4749685788825)
(55.92964824120603,3363.7432554805337)
(56.28140703517588,3347.1696587126357)
(56.63316582914573,3331.7571912704257)
(56.98492462311558,3317.5087934536637)
(57.33668341708543,3304.4273077664297)
(57.68844221105528,3292.5154534599233)
(58.04020100502513,3281.7758007723437)
(58.39195979899498,3272.2107449243244)
(58.743718592964825,3263.8224799325244)
(59.095477386934675,3256.612972307861)
(59.447236180904525,3250.5839347084393)
(59.798994974874375,3245.7367996204835)
(60.150753768844226,3239.0990433753777)
(60.502512562814076,3222.4941868170654)
(60.854271356783926,3199.302100242278)
(61.206030150753776,3173.002898091624)
(61.55778894472362,3147.003698705442)
(61.90954773869347,3122.002604099809)
(62.26130653266332,3098.0025131355364)
(62.61306532663317,3075.006520304058)
(62.96482412060302,3053.0179010064867)
(63.31658291457287,3032.0400962835643)
(63.66834170854272,3012.076696982773)
(64.02010050251256,2993.13142735293)
(64.37185929648241,2975.2081280605667)
(64.72361809045226,2958.3107386263164)
(65.07537688442211,2942.443279283459)
(65.42713567839196,2927.609832264606)
(65.77889447236181,2913.8145225263543)
(66.13065326633166,2901.0614979255315)
(66.48241206030151,2889.354908864406)
(66.83417085427136,2878.698887425936)
(67.18592964824121,2869.09752602379)
(67.53768844221106,2860.554855595445)
(67.88944723618091,2853.0748233701806)
(68.24120603015076,2846.6612702472007)
(68.59296482412061,2841.3179078224407)
(68.94472361809046,2837.0482951058275)
(69.2964824120603,2833.8558149738587)
(69.64824120603015,2831.7436504053485)
(70.0,2830.714760551018)

 };

  \addplot[color=blue]coordinates { 

(0.0,11037.970588235292)
(0.35175879396984927,10981.4117845632)
(0.7035175879396985,10883.965352346777)
(1.0552763819095479,10784.537132908952)
(1.407035175879397,10687.901122900576)
(1.7587939698492463,10594.034807379625)
(2.1105527638190957,10502.915111802511)
(2.462311557788945,10414.518364986288)
(2.814070351758794,10328.820264438786)
(3.1658291457286434,10245.795844273947)
(3.5175879396984926,10165.419445920665)
(3.869346733668342,10087.664691822283)
(4.2211055276381915,10012.5044623107)
(4.572864321608041,9939.910875823596)
(4.92462311557789,9869.855272616029)
(5.276381909547739,9802.30820209842)
(5.628140703517588,9737.239413912008)
(5.9798994974874375,9674.617852830414)
(6.331658291457287,9614.411657552151)
(6.683417085427136,9556.588163424038)
(7.035175879396985,9501.113909109687)
(7.386934673366834,9447.954647191005)
(7.738693467336684,9397.075358663964)
(8.090452261306533,9348.44027126337)
(8.442211055276383,9302.012881525054)
(8.793969849246231,9257.75598046828)
(9.145728643216081,9215.631682756393)
(9.49748743718593,9175.601459170237)
(9.84924623115578,9127.983689921088)
(10.201005025125628,9051.410669281428)
(10.552763819095478,8960.280729436397)
(10.904522613065327,8863.537830586249)
(11.256281407035177,8767.789517919802)
(11.608040201005027,8674.29890461485)
(11.959798994974875,8583.05693522229)
(12.311557788944725,8494.054761862913)
(12.663316582914574,8407.283710816982)
(13.015075376884424,8322.735248018647)
(13.366834170854272,8240.4009435008)
(13.718592964824122,8160.272434844849)
(14.07035175879397,8082.341389699099)
(14.42211055276382,8006.599467438879)
(14.773869346733669,7933.038280051364)
(15.125628140703519,7861.649352337903)
(15.477386934673367,7792.42408153679)
(15.829145728643217,7725.353696479536)
(16.180904522613066,7660.429216403808)
(16.532663316582916,7597.64140955628)
(16.884422110552766,7536.980751728439)
(17.236180904522616,7478.4373848779705)
(17.587939698492463,7422.001075997516)
(17.939698492462313,7367.661176401184)
(18.291457286432163,7315.406581607235)
(18.643216080402013,7265.225692002526)
(18.99497487437186,7217.1063744805915)
(19.34673366834171,7171.035925250479)
(19.69849246231156,7127.001034017406)
(20.05025125628141,7080.486743293333)
(20.402010050251256,7019.623986427204)
(20.753768844221106,6949.95413114773)
(21.105527638190956,6876.529469435019)
(21.457286432160807,6803.97631634962)
(21.809045226130653,6733.2222657933735)
(22.160804020100503,6664.262961414183)
(22.512562814070353,6597.0943290689875)
(22.864321608040203,6531.712554383496)
(23.216080402010054,6468.114059216706)
(23.5678391959799,6406.295477017047)
(23.91959798994975,6346.25362706079)
(24.2713567839196,6287.985487567548)
(24.62311557788945,6231.488167692454)
(24.974874371859297,6176.75887839975)
(25.326633165829147,6123.794902228354)
(25.678391959798997,6072.593561966283)
(26.030150753768847,6023.152188257806)
(26.381909547738694,5975.468086174803)
(26.733668341708544,5929.538500792102)
(27.085427135678394,5885.360581815535)
(27.437185929648244,5842.931347321079)
(27.78894472361809,5802.247646673823)
(28.14070351758794,5763.306122706439)
(28.49246231155779,5726.103173248478)
(28.84422110552764,5690.634912109926)
(29.19597989949749,5656.897129635137)
(29.547738693467338,5624.885252956221)
(29.899497487437188,5591.148945258702)
(30.251256281407038,5546.21208283114)
(30.603015075376888,5493.986115360289)
(30.954773869346734,5438.497486277135)
(31.306532663316585,5383.680866281804)
(31.658291457286435,5330.337973923166)
(32.01005025125628,5278.467619329036)
(32.36180904522613,5228.068914777674)
(32.71356783919598,5179.14125956983)
(33.06532663316583,5131.6843239277705)
(33.41708542713568,5085.698031877845)
(33.76884422110553,5041.182543073129)
(34.12060301507538,4998.1382335129365)
(34.47236180904523,4956.56567511676)
(34.824120603015075,4916.465614111444)
(35.175879396984925,4877.838948192283)
(35.527638190954775,4840.686702421331)
(35.879396984924625,4805.0100038295695)
(36.231155778894475,4770.810054693933)
(36.582914572864325,4738.08810446544)
(36.934673366834176,4706.845420331139)
(37.286432160804026,4677.083256400173)
(37.63819095477387,4648.802821513147)
(37.98994974874372,4622.005245684263)
(38.34170854271357,4596.691545197299)
(38.69346733668342,4572.862586389616)
(39.04522613065327,4550.519048172884)
(39.39698492462312,4529.661383355146)
(39.74874371859297,4510.28977884611)
(40.10050251256282,4489.168447988005)
(40.45226130653267,4457.404802390711)
(40.80402010050251,4418.674260953565)
(41.15577889447236,4376.749387134356)
(41.50753768844221,4335.308627116551)
(41.85929648241206,4295.102927168018)
(42.21105527638191,4256.133444368902)
(42.56281407035176,4218.40165105972)
(42.91457286432161,4181.909325208857)
(43.26633165829146,4146.658539901697)
(43.618090452261306,4112.651651884226)
(43.969849246231156,4079.891289090762)
(44.321608040201006,4048.3803370826035)
(44.67336683417086,4018.1219243219466)
(45.02512562814071,3989.119406203418)
(45.37688442211056,3961.376347764324)
(45.72864321608041,3934.896504994179)
(46.08040201005026,3909.6838046645385)
(46.43216080402011,3885.742322601705)
(46.78391959798995,3863.0762603277267)
(47.1356783919598,3841.6899199993723)
(47.48743718592965,3821.5876775806873)
(47.8391959798995,3802.7739541924097)
(48.19095477386935,3785.253185591152)
(48.5427135678392,3769.0297897429496)
(48.89447236180905,3754.1081324696506)
(49.2462311557789,3740.492491162767)
(49.597989949748744,3728.187016577861)
(49.949748743718594,3714.294166265621)
(50.301507537688444,3690.7758024690106)
(50.653266331658294,3660.866664104428)
(51.005025125628144,3627.9687717386946)
(51.356783919597994,3595.440109013194)
(51.708542713567844,3563.969196614255)
(52.060301507537694,3533.5586901237157)
(52.412060301507545,3504.211582326154)
(52.76381909547739,3475.9311986191474)
(53.11557788944724,3448.7211916654956)
(53.46733668341709,3422.5855352029384)
(53.81909547738694,3397.528516920983)
(54.17085427135679,3373.5547303086273)
(54.52261306532664,3350.6690653712326)
(54.87437185929649,3328.8766981096164)
(55.22613065326634,3308.1830786498404)
(55.57788944723618,3288.593917908263)
(55.92964824120603,3270.1151726734643)
(56.28140703517588,3252.753028984791)
(56.63316582914573,3236.513883686784)
(56.98492462311558,3221.404324039828)
(57.33668341708543,3207.4311052702938)
(57.68844221105528,3194.6011259484317)
(58.04020100502513,3182.92140108961)
(58.39195979899498,3172.3990328843606)
(58.743718592964825,3163.0411789753707)
(59.095477386934675,3154.8550182151816)
(59.447236180904525,3147.8477138571293)
(59.798994974874375,3142.0263741540466)
(60.150753768844226,3134.54875880784)
(60.502512562814076,3117.5974511810077)
(60.854271356783926,3094.4115414126422)
(61.206030150753776,3068.292266431139)
(61.55778894472362,3042.442499408868)
(61.90954773869347,3017.5176495065216)
(62.26130653266332,2993.5215319425747)
(62.61306532663317,2970.4583201623973)
(62.96482412060302,2948.3325452634517)
(63.31658291457287,2927.149094752598)
(63.66834170854272,2906.9132105365784)
(64.02010050251256,2887.630486038465)
(64.37185929648241,2869.306862324518)
(64.72361809045226,2851.9486231176843)
(65.07537688442211,2835.5623885659807)
(65.42713567839196,2820.155107626486)
(65.77889447236181,2805.734048918741)
(66.13065326633166,2792.3067898953163)
(66.48241206030151,2779.8812041723713)
(66.83417085427136,2768.4654468594777)
(67.18592964824121,2758.067937726122)
(67.53768844221106,2748.6973420424442)
(67.88944723618091,2740.362548934238)
(68.24120603015076,2733.0726470973937)
(68.59296482412061,2726.8368977251034)
(68.94472361809046,2721.6647045126547)
(69.2964824120603,2717.565580619753)
(69.64824120603015,2714.549112489382)
(70.0,2712.6249204453875)

  };
 \legend{$F$ for system \eqref{eq:S1},$F$ for system \eqref{eq:primal2}}
\end{axis} 
\end{tikzpicture}\hfill~
  \caption{Comparisons of the numerical solutions $F$ (in continous blue line for System \eqref{eq:S1} and in dashed red line for System~\eqref{eq:primal2} or equation \ref{init}). The initial conditions correspond to the ``persistence'' equilibrium (see propositions \ref{prop:4 eq steady} and \ref{prop:2 eq steady}) Left: $u(\cdot)=15\,000$. Right : the releases occur every $10$ days with an intensity of $20\,000$ mosquitoes in one day, i.e. $u({t})=20\,000 \sum_{k=0}^6\mathds{1}_{[10k,10k+1]}{(t)}$.}\label{fig_compar}
\end{figure}

%
%

%

%
 
\subsection{Mathematical properties of the dynamical systems}

This section is devoted to establishing stability properties for equilibria of Systems~\eqref{eq:S1} and \eqref{eq:primal2} in the absence of control, in order to qualitatively understand their behavior whenever initial data are chosen close to equilibria. These results can be considered as preliminary tools before investigating optimal control properties for these problems.

Recall that both systems share the same steady states. We will moreover show that they enjoy the same stability properties. For the sake of readability, all proofs are postponed to Appendix~\ref{sec:mathpropdynsys}.

In what follows, we will make the following assumption, in accordance with the numerical values gathered in Table~\ref{tab:steril}:
\begin{equation}\tag{\mbox{$\mathcal{H}$}}\label{eq:cond prop}
 \delta_s>\delta_M\quad\text{and}\quad {\mathcal{R}_0:=\frac{\nu\beta_E \nu_E}{\delta_F(\nu_E+\delta_E)}>1},
\end{equation}
{where $\mathcal{R}_0$ denotes the so-called basic offspring number (number of adult females produced by one adult female during her lifespan).  }

\begin{proposition}[Stability properties for System~\eqref{eq:S1}]
  \label{prop:4 eq steady}
  Let us assume that \eqref{eq:cond prop} holds. 
\begin{enumerate}
\item If $u(\cdot)=0$, then, System~\eqref{eq:S1} has two equilibria:
\begin{itemize}
\item the ``extinction'' equilibrium $(E_1^*,M_1^*,F_1^*,M_{s1}^*)=(0,0,0,0)$, which
  is linearly unstable\footnote{{Meaning that at least one eigenvalue of the Jacobian matrix of the system has a positive real part}};
\item the ``persistence'' equilibrium $(E_2^*,M_2^*,F_2^*,M_{s2}^*)=\left(\bar E,\bar M,\bar F,0\right)$, where
\begin{equation}\label{eq:equi}
 \bar E=K \left(1-{\frac{1}{\mathcal{R}_0}}\right),  \qquad
\bar  M=\frac{(1-\nu)\nu_E}{\delta_M}\bar  E,\qquad 
\bar  F=\frac{\nu\nu_E}{\delta_F}\bar  E,
\end{equation}
%
which is {locally asymptotically stable (LAS)}.
\end{itemize}
\item If the control function $u$ is assumed to be non-negative, then the corresponding solution $(E,M,$ $F,M_s)$ to System~\eqref{eq:S1} enjoys the following stability property:
\begin{equation*}
\left\{\begin{array}{l}
E(0)\in(0,\bar E]\\
M(0)\in(0,\bar M]\\
F(0)\in(0,\bar F]\\
M_{s}(0)\geqslant 0
\end{array}\right.
\Longrightarrow
\left\{\begin{array}{l}
E(t)\in(0,\bar E]\\
M(t)\in(0,\bar M]\\
F(t)\in(0,\bar F]\\
M_{s}(t)\geqslant 0
\end{array}\right.
\mbox{ for all }t\geqslant 0.
\end{equation*}
Finally, let $U^*$ be defined by
  \begin{equation}\label{eq:U*}
    U^* := {\mathcal{R}_0}\frac{K (1-\nu) \nu_E \delta_s}{4\gamma_s\delta_M}\left(1-{\frac{1}{\mathcal{R}_0}}\right)^2
  \end{equation}
  and let $\bar{U}$ denote any positive number such that $\bar{U} > U^*$.
  If $u(\cdot)$ denotes the constant control function almost everywhere equal to $\bar{U}$ for all $t\geqslant 0$, then the corresponding solution $(E(t),M(t),F(t))$  to System~\eqref{eq:S1} converges to the extinction equilibrium as $t\to +\infty$.
\end{enumerate}
\end{proposition}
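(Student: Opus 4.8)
The plan is to prove the three assertions in item (2) separately, using monotonicity/invariance arguments for the box stability claim and a Lyapunov-type comparison argument for the convergence claim.

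\textbf{Step 1: invariance of the box $(0,\bar E]\times(0,\bar M]\times(0,\bar F]\times[0,+\infty)$.} First I would check invariance of each face of the box by inspecting the sign of the corresponding vector field component on that face, for arbitrary nonnegative control $u(\cdot)$. On $\{M_s=0\}$ one has $\dot M_s=u\geqslant 0$, so the slab $\{M_s\geqslant 0\}$ is forward invariant. On $\{E=\bar E\}$ (with the other variables in the box) one computes $\dot E=\beta_E F(1-\bar E/K)-(\nu_E+\delta_E)\bar E$, and using $\bar E=K(1-1/\mathcal R_0)$ together with $F\leqslant\bar F$ and the definition of $\bar F,\mathcal R_0$ one gets $\dot E\leqslant 0$; positivity of $E$ away from $0$ follows because $\dot E\geqslant -(\nu_E+\delta_E)E$ on the box, so $E$ cannot reach $0$ in finite time. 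The analogous sign computations on $\{M=\bar M\}$ (using $\dot M=(1-\nu)\nu_E E-\delta_M M$ and $E\leqslant\bar E=\frac{\delta_M}{(1-\nu)\nu_E}\bar M$) and on $\{F=\bar F\}$ (using $\dot F=\nu\nu_E E\frac{M}{M+\gamma_s M_s}-\delta_F F\leqslant\nu\nu_E E-\delta_F F$ and $E\leqslant\bar E$) give $\dot M\leqslant 0$, $\dot F\leqslant 0$; again the linear lower bounds $\dot M\geqslant-\delta_M M$, $\dot F\geqslant-\delta_F F$ prevent these variables from hitting $0$. Hence the closed box with its lower faces on the coordinate hyperplanes removed is positively invariant, which is exactly the stated implication.

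\textbf{Step 2: convergence to extinction under the constant control $\bar U>U^*$.} Here I would exploit the quasi-steady structure. Since $\dot M_s=\bar U-\delta_s M_s$ with $\bar U$ constant, $M_s(t)\to \bar U/\delta_s$, and moreover $\liminf_{t\to\infty} M_s(t)\geqslant \bar U/\delta_s-\eta$ for any $\eta>0$ after some time $T_\eta$. Plugging this lower bound into the $F$-equation and using $M\leqslant \frac{(1-\nu)\nu_E}{\delta_M}\bar E$ (from Step 1, after shrinking into the invariant box, which one may assume WLOG since otherwise one first waits for the trajectory to enter it — or argues directly with the a priori bounds) and $E\leqslant\bar E$, one obtains, for $t\geqslant T_\eta$,
\[
\dot F\leqslant \nu\nu_E \bar E\,\frac{\frac{(1-\nu)\nu_E}{\delta_M}\bar E}{\frac{(1-\nu)\nu_E}{\delta_M}\bar E+\gamma_s(\bar U/\delta_s-\eta)}-\delta_F F.
\]
Wait — more care is needed: the quadratic-in-$F$ structure after eliminating $E$ is what makes $U^*$ appear, so I would instead substitute the quasi-equilibrium value $E=\frac{\beta_E F}{\beta_E F/K+\nu_E+\delta_E}$ is \emph{not} exact for System~\eqref{eq:S1}; rather I would use the exact inequality $E\leqslant \bar E=K(1-1/\mathcal R_0)$ in the numerator and keep the true $E$ elsewhere, then show $\limsup F\le$ (something) and bootstrap. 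The cleanest route: show that the function $f(F,M_s)$ appearing in \eqref{eq:primal2} (obtained after the quasi-steady reduction) satisfies, for $M_s$ bounded below by $\bar U/\delta_s-\eta$, that $f(F,M_s)<0$ for all $F>0$ precisely when $\bar U/\delta_s-\eta>U^*/\delta_s$, by reducing $f(F,M_s)=0$ to a quadratic in $F$ and checking its discriminant; the threshold value is exactly $U^*$ as in \eqref{eq:U*}. This forces $F\to 0$ monotonically (eventually), and then $E\to 0$, $M\to 0$ by the linear stability of the $E$- and $M$-equations once $F\to 0$.

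\textbf{Step 3: assemble.} Once $F(t)\to 0$, the $E$-equation reads $\dot E=\beta_E F(1-E/K)-(\nu_E+\delta_E)E$ with $F\to 0$, whence $E\to 0$ by a standard comparison/variation-of-constants estimate; then $\dot M=(1-\nu)\nu_E E-\delta_M M$ with $E\to 0$ gives $M\to 0$. Combined with $M_s\to\bar U/\delta_s$, the trajectory converges to the extinction equilibrium $(0,0,0,\bar U/\delta_s)$ (the stated conclusion only mentions $(E,M,F)\to(0,0,0)$).

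\textbf{Main obstacle.} The delicate point is Step 2: the convergence is \emph{not} a small-perturbation statement, so a linearization near the extinction equilibrium is not enough, and one genuinely needs the global sign analysis of $f(\cdot,M_s)$ — i.e. that the threshold $U^*$ is sharp is encoded in the discriminant of a quadratic in $F$ after the quasi-steady elimination. Making this rigorous requires (i) justifying that one may replace the true $E,M$ by their quasi-equilibrium expressions up to controlled errors, or bypassing this by working directly with the inequalities from Step 1, and (ii) handling the coupling carefully so that the ``eventually $\dot F<0$'' conclusion is uniform rather than only pointwise. I expect this bootstrapping between the $F$-dynamics and the $(E,M)$-dynamics to be the technical heart of the argument; everything else is sign-checking and elementary ODE comparison.
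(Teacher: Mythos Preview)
Your Step~1 is correct and essentially equivalent to the paper's argument, though the paper phrases it slightly differently: it observes that the $(E,M,F)$-subsystem is cooperative (Kamke--M\"uller), takes $(0,0,0,0)$ as a subsolution and $(\bar E,\bar M,\bar F,\|u\|_\infty/\delta_s)$ as a supersolution, and concludes invariance by comparison; the strict lower bounds come from the same exponential inequalities $E(t)\geqslant e^{-(\nu_E+\delta_E)t}E(0)$ etc.\ that you wrote down.

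In Step~2 you take a genuinely different---and harder---route than the paper. The paper does \emph{not} pass through the reduced function $f(F,M_s)$ of \eqref{eq:primal2} at all, so the quasi-steady approximation issue you flag as the ``main obstacle'' simply does not arise. Instead, the paper computes the equilibria of the full four-dimensional system~\eqref{eq:S1} with $u\equiv\bar U$ directly: the $M_s$-equation forces $\bar M_s=\bar U/\delta_s$, and substituting $\bar M=\frac{(1-\nu)\nu_E}{\delta_M}\bar E$ and $\bar F=\frac{\nu\nu_E}{\delta_F}\frac{\bar E\,\bar M}{\bar M+\gamma_s\bar M_s}$ into the $E$-equation yields a quadratic in $\bar E$ whose discriminant is negative precisely when $\bar U>U^*$. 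Hence the extinction state is the \emph{only} equilibrium, and the paper then infers convergence of every bounded nonnegative trajectory to it (implicitly using the monotone/cooperative structure of the $(E,M,F)$-subsystem with $M_s(t)\to\bar U/\delta_s$). This is cleaner than your bootstrapping, which would require controlling the gap between the true $E,M$ and their quasi-equilibrium values---doable, but more work. The paper's final inference is itself rather terse; if you want to fill it in, the right tools are asymptotic autonomy plus the fact that bounded orbits of cooperative systems converge to equilibria.

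Note also that your proposal does not touch item~(1) (existence of the two equilibria, linear instability of extinction, LAS of persistence); the paper handles instability by comparing with an auxiliary planar system once $\gamma_s M_s<\epsilon M$, and LAS by a direct Jacobian computation at $(\bar E,\bar M,\bar F,0)$.
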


{
\begin{remark}
We verify from the first point of this proposition that $\mathcal{R}_0>1$ implies the population persistence while $\mathcal{R}_0\leqslant 1$ expresses the population extinction.
\end{remark}
}

In the following result, we will again use the notations introduced in Prop.~\ref{prop:4 eq steady} above.

\begin{proposition}[Stability properties for System~\eqref{eq:primal2}]
  \label{prop:2 eq steady}
  Let us assume that \eqref{eq:cond prop} holds.
 \begin{enumerate} 
 \item If $u(\cdot)=0$, System~\eqref{eq:primal2} has two equilibria:
\begin{itemize}
\item the ``extinction'' equilibrium $(F_1^*,M_{s1}^*)=(0,0)$, which
  is unstable if $\delta_s > \delta_F$.
\item the ``persistence'' equilibrium $(F_2^*,M_{s2}^*)=\left(\bar F,0\right)$,
 is {locally asymptotically stable (LAS)}.
\end{itemize}
\item If the control function $u$ is assumed to be non-negative, then the corresponding solution $(F,M_s)$ to System~\eqref{eq:primal2} enjoys the following stability property:
\begin{equation*}
\left\{\begin{array}{l}
F(0)\in(0,\bar F]\\
M_{s}(0)\geqslant 0
\end{array}\right.
\Longrightarrow
\left\{\begin{array}{l}
F(t)\in(0,\bar F]\\
M_{s}(t)\geqslant 0
\end{array}\right.
\mbox{ for all }t\geqslant 0.
\end{equation*}
 If $u(\cdot)$ denotes the constant control function almost everywhere equal to $\bar{U} > U^*$ (defined by \eqref{eq:U*}), then $F(t)$ goes to $0$ as $t\to +\infty$.
\end{enumerate}
\end{proposition}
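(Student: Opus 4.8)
Since \eqref{eq:S1} and \eqref{eq:primal2} share the same equilibria, the arguments below run parallel to those used for Proposition~\ref{prop:4 eq steady} (and would be placed in the same appendix), with one genuine difference: the right-hand side $f$ of \eqref{eq:primal2} is no longer $C^1$ at the origin (it is only continuous there, with $f(0,M_s)=0$ for every $M_s\geqslant 0$, since both $N(F):=\nu(1-\nu)\beta_E^2\nu_E^2F^2$ and the denominator vanish at $F=0$ when $M_s=0$). Away from $\{F=0\}$ the map $f$ is a smooth rational function with non-vanishing denominators, so local existence and uniqueness hold in $\{F>0\}$; global existence then follows once the invariance of the box is established, because $F$ is bounded and $M_s$ solves a linear equation.

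\emph{Equilibria and local stability.} With $u\equiv 0$ the second equation forces $M_s=0$ at equilibrium, and $f(F,0)=F\big(\tfrac{\nu\beta_E\nu_E}{\beta_E F/K+\nu_E+\delta_E}-\delta_F\big)$ vanishes exactly at $F=0$ and at $F=\bar F$, the latter because $\bar F$ in \eqref{eq:equi} is characterised by $\beta_E\bar F/K+\nu_E+\delta_E=\nu\beta_E\nu_E/\delta_F$ (here one uses $\mathcal{R}_0>1$). Near $(\bar F,0)$ the denominators of $f$ are bounded away from $0$, so $f$ is smooth there and a short computation gives the Jacobian
\[
J(\bar F,0)=\begin{pmatrix} \delta_F\big(\tfrac{1}{\mathcal{R}_0}-1\big) & -\tfrac{\nu\gamma_s\delta_M}{1-\nu}\\[4pt] 0 & -\delta_s\end{pmatrix},
\]
which is upper triangular with strictly negative diagonal, hence $(\bar F,0)$ is LAS. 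For the extinction state I would observe that $\{M_s=0\}$ is invariant under \eqref{eq:primal2} when $u\equiv 0$, and that on this line $\dot F=f(F,0)>0$ for $0<F<\bar F$; thus trajectories starting arbitrarily close to $(0,0)$ on the $F$-axis move away, which already yields instability. The extra assumption $\delta_s>\delta_F$ is what guarantees that even trajectories with $M_s(0)>0$ escape: for $F$ small one has $\dot F\sim AF^2e^{\delta_s t}-\delta_F F$ with $A>0$ (because $M_s(t)=M_s(0)e^{-\delta_s t}$), and the substitution $F=e^{-\delta_F t}G$ turns this into a Riccati inequality $\dot G\gtrsim Ae^{(\delta_s-\delta_F)t}G^2$ whose solutions blow up in finite time when $\delta_s>\delta_F$.

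\emph{Invariance of $(0,\bar F]\times[0,+\infty)$ for $u\geqslant 0$.} Non-negativity of $M_s$ is the variation-of-constants formula for $\dot M_s=u-\delta_s M_s$. For $F$ I would use two one-sided barriers valid for every $M_s\geqslant 0$: on one hand the fraction in $f$ is non-negative, so $\dot F\geqslant -\delta_F F$ and $F(t)\geqslant F(0)e^{-\delta_F t}>0$; on the other hand, increasing $M_s$ increases the denominator $D_2:=(1-\nu)\nu_E\beta_E F+\delta_M\gamma_s M_s(\beta_E F/K+\nu_E+\delta_E)$, hence $f(F,M_s)\leqslant f(F,0)=:\phi(F)$, where $\phi$ is smooth on $[0,\infty)$, $\phi(\bar F)=0$ and $\phi>0$ on $(0,\bar F)$. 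Comparison with $\dot z=\phi(z)$, $z(0)=F(0)\leqslant\bar F$, then keeps $F(t)\leqslant\bar F$ for all $t\geqslant 0$.

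\emph{Extinction under a strong constant release $\bar U>U^*$.} The crux is the equivalence, for $F>0$ and any $M_s\geqslant 0$,
\[
f(F,M_s)<0\iff M_s>\frac{(1-\nu)\nu_E\beta_E^2}{K\gamma_s\delta_M}\cdot\frac{F(\bar F-F)}{(\beta_E F/K+\nu_E+\delta_E)^2},
\]
obtained by clearing denominators and using the identity $\nu\beta_E\nu_E-\delta_F(\beta_E F/K+\nu_E+\delta_E)=\tfrac{\delta_F\beta_E}{K}(\bar F-F)$, itself a rewriting of the definition of $\bar F$. Maximising the right-hand side over $F\in(0,\bar F]$ — the maximiser is $F=\bar F/(\mathcal{R}_0+1)$, giving value $\tfrac{\bar F^2}{4(\nu_E+\delta_E)^2\mathcal{R}_0}$ for the last fraction — produces exactly the threshold $U^*/\delta_s$ with $U^*$ as in \eqref{eq:U*}. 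Now, for $u\equiv\bar U$ one has $M_s(t)\to\bar U/\delta_s>U^*/\delta_s$, so there exist $m\in(U^*/\delta_s,\bar U/\delta_s)$ and $T_0\geqslant 0$ with $M_s(t)\geqslant m$ for $t\geqslant T_0$; by monotonicity of $f$ in $M_s$ and the invariance $F(t)\in(0,\bar F]$, this gives $\dot F(t)\leqslant f(F(t),m)<0$ for $t\geqslant T_0$. To upgrade strict decrease to $F(t)\to 0$, I would note that $F\mapsto f(F,m)/F$ is continuous on $(0,\bar F]$, tends to $-\delta_F<0$ as $F\to 0^+$ and is negative throughout, hence bounded above by some $-c<0$; then $\dot F\leqslant -cF$ on $[T_0,+\infty)$ and $F(t)\to 0$ exponentially. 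The main obstacle is precisely this last part: setting up the algebraic reformulation of $\{f<0\}$ cleanly, carrying out the maximisation over $F$, and verifying that the constant produced is literally $U^*$ from \eqref{eq:U*}; everything else is comparison-of-ODE bookkeeping.
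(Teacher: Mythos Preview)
Your proposal is correct and follows the same overall architecture as the paper's proof: compute the equilibria, linearise at $(\bar F,0)$ to get the triangular Jacobian with negative diagonal, establish the invariance of $(0,\bar F]\times[0,\infty)$ by one-sided comparison, and for the extinction step identify the threshold $U^*/\delta_s$ as the maximum over $F$ of the curve $\{f=0\}$ in the $(F,M_s)$ plane. Your extinction argument is in fact more explicit and more careful than the paper's: the paper simply observes that for $\bar U>U^*$ the equilibrium equation in $\bar E$ has no positive root, whence $f(F,\bar U/\delta_s)<0$ for all $F>0$, and then asserts convergence; you carry out the maximisation by hand (arriving at the same constant) and close the argument with the $f(F,m)/F\leqslant -c$ bound, which the paper leaves implicit.

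The one step where you diverge is the instability of $(0,0)$. The paper does not use your Riccati substitution; instead it exploits $\delta_s>\delta_F$ to show that for any $\eta>0$ there is $t^*$ with $\delta_M\gamma_s(\beta_E F/K+\nu_E+\delta_E)M_s<\eta(1-\nu)\nu_E\beta_E F$ for $t\geqslant t^*$, whence $f(F,M_s)>\tilde f(F):=\dfrac{\nu\beta_E\nu_E F}{(1+\eta)(\beta_E F/K+\nu_E+\delta_E)}-\delta_F F$, and concludes from $\tilde f'(0)>0$ for $\eta$ small. This comparison is cleaner to make rigorous than your Riccati heuristic (where the asymptotic $\dot F\sim AF^2e^{\delta_s t}$ depends on $M_s$ dominating in the denominator, which competes with $F$ when both initial data are small). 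Your first observation, that $\{M_s=0\}$ is invariant with $f(F,0)>0$ on $(0,\bar F)$, already suffices for instability and is the simplest route of all.
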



As a consequence of Propositions~\ref{prop:4 eq steady} and \ref{prop:2 eq steady}, it follows that if the horizon of time of control $T$ is large enough, by releasing a sufficient amount of mosquitoes, it is possible to make the wild population of mosquitoes be as small as desired at time $T$.
In the next section, we will investigate the issue of minimizing the number of released mosquitoes in order to reach a given size of the wild population.
In Section~\ref{sec:otherpb}, we will also comment on other relevant minimization problems related to the sterile insect technique.

\section{Optimal control problems}\label{sec:opt}
In what follows, we will consider an initial state corresponding to the beginning of the experiment, in which there are no sterile mosquitoes. Hence, the mosquito population is at the ``persistence'' equilibrium at the beginning of the experiment, meaning that one chooses
\begin{equation}\label{init} \begin{array}{c}
\quad F(0) = \bar{F}, \quad M_s(0)=0,  \mbox{ for System \eqref{eq:S1}, to which we should add } \\
E(0)=\bar E,\quad M(0)=\bar M, \mbox{ for System~\eqref{eq:S1}}
\end{array}
\end{equation}
where $(\bar{E},\bar{M},\bar{F})$ are defined by \eqref{eq:equi}.
Our aim is to determine the optimal time distribution of the releases, such that the number of mosquitoes used to reach a desired size of the population is as small as possible.

\subsection{Statement of the problems and main results}
Let us first model the optimization of the release procedure. Given the duration of the experiment, we aim at minimizing the amount of mosquitoes required to reduce the size of the wild population of mosquitoes to a given value:
\begin{center}
\textit{What should be the time distribution of optimal releases in order to reach a given value of the wild population at the end of the experiment, by using as few sterilized mosquitoes as possible?}
\end{center}
To answer this question, we first define a cost functional that will stand for the objective we are trying to achieve.
We mention that other formulations of minimization problem are also proposed in Section~\ref{sec:otherpb}.

Let $T>0$ be a given horizon of time, $\bar U>0$ be a maximal amount of sterilized mosquitoes, and $\varepsilon>0$ be a given target amount of female mosquitoes. We will investigate the issue above for both Systems~\eqref{eq:primal2} and \eqref{eq:S1}. We model the optimal control problems for the sterile mosquito release strategy as
\begin{equation}\label{eq:opt2}
\boxed{\inf_{u\in \mathcal{U}_{T,\overline{U},\varepsilon}^{(\mathcal{S}_1)}} J(u)},
\tag{\mbox{$\mathcal{P}_{T,\overline{U},\varepsilon}^{(\mathcal{S}_1)}$}}
\end{equation}
and
\begin{equation}\label{eq:opt1}
\boxed{\inf_{u\in \mathcal{U}_{T,\overline{U},\varepsilon}^{(\mathcal{S}_2)}} J(u)},
\tag{\mbox{$\mathcal{P}_{T,\overline{U},\varepsilon}^{(\mathcal{S}_2)}$}}
\end{equation}
where the functional $J$ stands for the total number of released mosquitoes during the time $T$, namely
\begin{equation*}
J(u):=\int_0^Tu(t)dt.
\end{equation*} 
For a given $\varepsilon>0$, we introduce the sets of admissible controls for Systems~\eqref{eq:primal2} and \eqref{eq:S1}, respectively denoted $\mathcal{U}_{T,\overline{U},\varepsilon}^{(\mathcal{S}_1)}$ and $\mathcal{U}_{T,\overline{U},\varepsilon}^{(\mathcal{S}_2)}$ and defined by
\begin{equation}\label{calU2}\begin{array}{c}
{\mathcal U}_{T,\bar{U},\varepsilon}^{(\mathcal{S}_1)} := \Big\{ u\in L^\infty(0,T):0\leqslant u \leqslant \overline{U} \text{ a.e.},F(T)\leqslant \varepsilon\hspace{3cm}\\\hspace{3cm}\mbox{ with }F\mbox{ solution of System~\eqref{eq:primal2}}\Big\}\end{array}
\end{equation}
and
\begin{equation*}\begin{array}{c}
{\mathcal U}_{T,\bar{U},\varepsilon}^{(\mathcal{S}_2)} := \Big\{ u\in L^\infty(0,T):0\leqslant u \leqslant \overline{U} \text{ a.e.},F(T)\leqslant \varepsilon\hspace{3cm}\\\hspace{3cm}\mbox{ with }F\mbox{ solution of System~\eqref{eq:S1}}\Big\}.\end{array}
\end{equation*}

\begin{remark}[Exact null-controllability]
It is notable that there does not exist any control $u\in L^{\infty}(0,T;\RR_+)$  such that the corresponding solution $F$ to System \eqref{eq:primal2} (resp. System~\eqref{eq:S1}) satisfies $F(T)=0$ since one has $F(t)\geqslant F(0)e^{-\delta_Ft}$ for  every $t\in [0,T]$. 
\end{remark}

Let us now state the main results of this article. In what follows, we will use the notation $U^*$ to denote the positive number defined by \eqref{eq:U*}, and the notations  $J_{T,\overline{U},\varepsilon}^{(\mathcal{S}_i)}$, $i=1,2$ to denote the optimal values for Problems~\eqref{eq:opt2} and \eqref{eq:opt1}, namely
\begin{equation}\label{optval}
J_{T,\overline{U},\varepsilon}^{(\mathcal{S}_i)} := \inf_{u\in {\mathcal U}_{T,\overline{U},\varepsilon}^{(\mathcal{S}_i)}} J(u) , \qquad i=1,2.
\end{equation}
\begin{theorem}\label{theo:opt 1}
Let us assume that Condition \eqref{eq:cond prop} holds true.
Let $\eps\in(0,\bar{F})$ and $\overline{U} > U^*$. There exists a minimal time $\overline{T}>0$ such that for all $T\geqslant \overline{T}$, the set ${\mathcal U}_{T,\overline{U},\varepsilon}^{(\mathcal{S}_2)}$ is nonempty and Problem~\eqref{eq:opt1} has a solution $u^*$.
One has $J_{T,\overline{U},\varepsilon}^{(\mathcal{S}_2)} \leqslant \bar{U}\, \bar{T} $ and the mappings $T\in [\bar T,+\infty)\mapsto J_{T,\overline{U},\varepsilon}^{(\mathcal{S}_2)}$ and $ \bar U\in (U^*,+\infty)\mapsto J_{T,\overline{U},\varepsilon}^{(\mathcal{S}_2)}$ are non-increasing.
Furthermore, there exists $t_1\in(0,T)$  such that
 $$u^*=0\quad \mbox{ on }(t_1,T),$$
one has $F(T)=\varepsilon$ and $F\in[\varepsilon,\bar F]$ on $(0,T)$.
\end{theorem}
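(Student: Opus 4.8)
The strategy is to establish the four assertions in order: (i) existence of a minimal time $\bar T$ and nonemptiness of the admissible set, together with existence of a minimizer; (ii) the bound $J_{T,\overline{U},\varepsilon}^{(\mathcal{S}_2)}\leqslant \bar U\,\bar T$ and monotonicity of $T\mapsto J$ and $\bar U\mapsto J$; (iii) the structure $u^*=0$ on some interval $(t_1,T)$; (iv) the saturation $F(T)=\varepsilon$ together with $F\in[\varepsilon,\bar F]$ on $(0,T)$.

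For (i), the key point is the long-time behavior established in Proposition~\ref{prop:4 eq steady}: since $\bar U>U^*$, the constant control $u\equiv \bar U$ drives $F(t)\to 0$ as $t\to+\infty$. Hence there is a time $\bar T$ at which the corresponding solution reaches the value $\varepsilon$, which shows ${\mathcal U}_{T,\bar U,\varepsilon}^{(\mathcal{S}_2)}\neq\emptyset$ for all $T\geqslant\bar T$ (one uses the constant control on $[0,\bar T]$, extended by anything admissible, e.g. $0$, on $[\bar T,T]$). Formally $\bar T := \inf\{t>0 : F_{\bar U}(t)\leqslant\varepsilon\}$ where $F_{\bar U}$ solves \eqref{eq:S1} with $u\equiv\bar U$ and the initial data \eqref{init}; one should check $\bar T>0$ since $F(0)=\bar F>\varepsilon$ and $F$ is continuous. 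Existence of a minimizer then follows from the direct method: take a minimizing sequence $(u_n)$, which is bounded in $L^\infty(0,T)$ hence weakly-$*$ compact; pass to the limit in the dynamics (the right-hand side of \eqref{eq:S1} depends on $u$ only through the linear term $u-\delta_s M_s$, and $f$ is locally Lipschitz, so by a priori bounds from the invariance region in Proposition~\ref{prop:4 eq steady} the solutions converge uniformly on $[0,T]$); the constraint $F(T)\leqslant\varepsilon$ passes to the limit by this uniform convergence and $J$ is linear and weakly-$*$ continuous in $u$, so the infimum is attained. The bound $J_{T,\bar U,\varepsilon}^{(\mathcal{S}_2)}\leqslant\bar U\bar T$ is then immediate from the admissible competitor just described (constant control $\bar U$ on $[0,\bar T]$, then $0$), since its cost is exactly $\bar U\bar T$.

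For the monotonicity in (ii): if $T_1\leqslant T_2$, any control admissible for horizon $T_1$ extends (by $0$ on $(T_1,T_2)$) to one admissible for horizon $T_2$ with the same cost — here one uses that on $(T_1,T_2)$ with $u=0$ one stays in the invariant region $F\in(0,\bar F]$, and more precisely that $F$ cannot increase back above the target because... — actually this requires the comparison argument that with $u\geqslant 0$ the solution $F$ starting below $\bar F$ stays below, but we also need $F$ at time $T_2$ to still satisfy the constraint; this is not automatic since $F$ with $u=0$ relaxes toward $\bar F$. The correct fix: instead extend by the \emph{same constant control} $\bar U$, or argue that the \emph{optimal} cost is non-increasing because one may simply keep releasing. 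I would argue: $J^{(\mathcal{S}_2)}_{T_2}\leqslant J^{(\mathcal{S}_2)}_{T_1}$ by noting that an optimal $u^*$ for $T_1$ can be used on $[0,T_1]$ and then continued so as to keep $F\leqslant\varepsilon$ on $[T_1,T_2]$ (e.g. by a large enough control, whose added cost can be taken arbitrarily small only if $\varepsilon$-sublevel is invariant, which it is \emph{not} for $u=0$). The cleanest route is via (iv): once we know any optimizer satisfies $F(T)=\varepsilon$ with $F$ decreasing to $\varepsilon$ and $u^*=0$ near $T$, the claimed monotonicities follow by a scaling/translation argument on the controls. I will treat (ii) after (iii)–(iv). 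Monotonicity in $\bar U$ is easier: enlarging $\bar U$ enlarges the admissible set $\mathcal U_{T,\bar U,\varepsilon}^{(\mathcal S_2)}$, hence the infimum can only decrease.

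For (iii) and (iv), the plan is to apply the Pontryagin Maximum Principle. Write the Hamiltonian with adjoint states $(p_E,p_M,p_F,p_{M_s})$; the control $u$ enters linearly through $p_{M_s}u$, so the optimal control is bang-bang or singular according to the sign of the switching function $\varphi:=p_{M_s}$: $u^*=\bar U$ where $\varphi>0$, $u^*=0$ where $\varphi<0$. The adjoint equation for $p_{M_s}$ reads $\dot p_{M_s}=\delta_s p_{M_s} - p_F\,\partial_{M_s}f$; since the constraint is only $F(T)\leqslant\varepsilon$ (an inequality on $F$), transversality gives $p_E(T)=p_M(T)=p_{M_s}(T)=0$ and $p_F(T)=-\lambda_0\mu$ with $\mu\geqslant 0$ the multiplier of the constraint (and $\mu=0$ only if the constraint is inactive, a case one excludes since then $u^*=0$ is trivially optimal and $F(T)<\varepsilon$ contradicts optimality of $J$ unless $J=0$, handled separately). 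From $p_{M_s}(T)=0$ and continuity, $\varphi<0$ on a left-neighborhood of $T$ provided $\dot\varphi(T)\neq 0$; computing $\dot p_{M_s}(T)=-p_F(T)\,\partial_{M_s}f(\bar E,\cdot)$ and checking its sign — $\partial_{M_s}f<0$ since releasing sterile males decreases the female growth, and $p_F(T)=-\lambda_0\mu\leqslant 0$ — gives $\dot\varphi(T)\leqslant 0$, hence $\varphi<0$ just before $T$, hence $u^*=0$ there; this yields $t_1\in(0,T)$. This sign analysis of the switching function near $T$ is the \textbf{main obstacle}: one must rule out degeneracy (the abnormal case $\lambda_0=0$, and $\mu=0$) and control the sign of $\partial_{M_s}f$ and of the adjoint along the trajectory. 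Finally, for (iv): if $F(T)<\varepsilon$, then $\mu=0$, so $p_F(T)=0$, and together with $p_E(T)=p_M(T)=p_{M_s}(T)=0$ the backward uniqueness for the linear adjoint system forces $p\equiv 0$, contradicting the nontriviality $(\lambda_0,p)\neq 0$ of PMP (in the normal case $\lambda_0=1$); hence $F(T)=\varepsilon$. For $F\in[\varepsilon,\bar F]$ on $(0,T)$: the upper bound $F\leqslant\bar F$ is the invariance in Proposition~\ref{prop:4 eq steady}; the lower bound $F\geqslant\varepsilon$ follows because if $F(\tau)<\varepsilon$ for some $\tau<T$, one could truncate the control at $\tau$, obtaining a cheaper admissible control for the same horizon (using $F\leqslant\bar F$ invariance and a monotonicity/comparison argument showing $F$ can be kept $\leqslant\varepsilon$ until $T$), contradicting optimality — this last comparison step, showing the sublevel set can be maintained at no extra cost given an optimal trajectory, is the technical heart and I expect it to require the detailed ODE analysis of \eqref{eq:S1} carried out in the subsequent section.
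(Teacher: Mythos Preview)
Your overall architecture is right, but there is one key idea missing and one genuine error in the PMP analysis, and together they account for every place where your argument stalls.

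\medskip

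\textbf{The missing idea: time translation.} The initial condition \eqref{init} is the \emph{persistence equilibrium}. Hence if $u=0$ on an initial interval $[0,\tau]$, the state stays exactly at $(\bar E,\bar M,\bar F,0)$ on that interval. The paper exploits this repeatedly:
\begin{itemize}
\item Nonemptiness: take $u=\bar U\,\mathds{1}_{[T-\bar T,T]}$. On $[0,T-\bar T]$ the state is frozen at equilibrium; on $[T-\bar T,T]$ it evolves exactly as the constant-$\bar U$ solution does on $[0,\bar T]$, so $F(T)=\varepsilon$. Your extension ``$\bar U$ on $[0,\bar T]$, then $0$'' fails precisely because, as you noticed, $F$ relaxes back toward $\bar F$ with $u=0$.
\item Monotonicity in $T$: given $T_1\leqslant T_2$ and an optimizer $u_1^*$ for horizon $T_1$, set $u_2(t)=u_1^*(t-T_2+T_1)\,\mathds{1}_{(T_2-T_1,T_2)}(t)$. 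The state is frozen on $[0,T_2-T_1]$, then replays the $T_1$-optimal trajectory, so $F(T_2)=\varepsilon$ and $J(u_2)=J(u_1^*)$. This is the clean argument you were looking for; no need to defer to (iii)--(iv).
\item $F>\varepsilon$ on $[0,T)$: if $F(T')=\varepsilon$ for some $T'<T$, take $u_{T'}=u^*(\cdot-T+T')\,\mathds{1}_{[T-T',T]}$; by the same translation this is admissible with strictly smaller cost $\int_0^{T'}u^*$, contradicting optimality. Your truncation ``$u^*$ on $[0,\tau]$, then $0$'' runs into the same relaxation problem and is not the right fix.
\end{itemize}

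\medskip

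\textbf{The switching-function error.} You write the switching function as $\varphi=p_{M_s}$ and conclude $u^*=0$ where $\varphi<0$. But the running cost $\int_0^T u$ contributes to the Hamiltonian: in the normal case the coefficient of $u$ is $1+p_{M_s}$ (equivalently, in the paper's Lagrangian formulation, $1+\lambda S$ with $S$ the $M_s$-adjoint). The optimality condition is $u^*=0$ on $\{1+\lambda S>0\}$. Since $S(T)=0$ by transversality, one has $1+\lambda S(T)=1>0$, and by continuity $u^*=0$ on a left neighbourhood of $T$. No derivative computation is needed; what you flagged as the ``main obstacle'' disappears once the cost term is restored in the switching function. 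The abnormal case $\lambda=0$ is excluded exactly as you suggest (then the switching function is identically $1$, forcing $u^*\equiv 0$, hence $F\equiv\bar F>\varepsilon$, a contradiction).

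\medskip

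For $F(T)=\varepsilon$, your PMP argument (vanishing multiplier forces trivial adjoint, contradicting nontriviality) is correct; the paper uses the more elementary scaling $u_\eta=(1-\eta)u^*$, which is cheaper and, by continuity of the solution map, still satisfies $F_\eta(T)<\varepsilon$ for small $\eta$.
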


It is interesting to note that there is no need to release sterile mosquitoes at the end of the experiment.
For the simplified system~\eqref{eq:primal2}, we can obtain a much more precise characterization of optimal controls, which is the purpose of the next result. As a preliminary remark, recall that the function $f$ is defined by \eqref{eq:f1}.
\begin{theorem}\label{theo:opt 2}
Let us assume that Condition \eqref{eq:cond prop} holds true and that $2\delta_s>\delta_F$. Let $\eps\in(0,\bar{F})$, $\overline{U} > U^*$. 
There exists a minimal time $\overline{T}>0$ such that for all $T\geqslant \overline{T}$,
the set ${\mathcal U}_{T,\overline{U},\varepsilon}^{(\mathcal{S}_1)}$ is nonempty and Problem~\eqref{eq:opt2} has a solution $u^*$, characterized as follows:
\begin{enumerate}
\item \textsf{Optimal value: }one has $J_{T,\overline{U},\varepsilon}^{(\mathcal{S}_1)}\leqslant \bar{U}\, \bar{T}$ and the mappings $ T\in [\bar T,+\infty)\mapsto J_{T,\overline{U},\varepsilon}^{(\mathcal{S}_1)}$ and $\bar U\in (U^*,+\infty)\mapsto J_{T,\overline{U},\varepsilon}^{(\mathcal{S}_1)}$ are non-increasing.
\item \textsf{Optimal control: }let $T > \bar{T}$. If $\overline{U} > U^*$ is large enough, there exists $(t_0,t_1)\in[0,T]^2$ with $t_0 \leqslant t_1$ such that
\begin{enumerate}
\item $u^*=0$ on $(0,t_0)$ or $u^*=\overline{U}$ on $(0,t_0)$;
\item $u^*\in(0,\overline{U})$ on $(t_0,t_1)$ with 
\begin{equation}\label{3eq:edo u}
u^*(t) = \left.\left(\frac{\partial^2 f}{\partial M_s^2}\right)^{-1} \left(\frac{\partial f}{\partial M_s}\frac{\partial f}{\partial F}+ \delta_s M_s\frac{\partial^2 f}{\partial M_s^2}-f\frac{\partial^2 f}{\partial M_s\partial F}  \right)\right|_{(F(t),M_s(t))};
\end{equation}
\item $u^*=0$ on $(t_1,T)$.
\end{enumerate}
Furthermore, there exists $T^*(\overline{U})$ (simply denoted $T^*$ when no confusion is possible) such that $T^*>\bar{T}$ and that if $T>T^*$, then the optimizer $u^*$ is unique, with $0<t_0<t_1<T$, and such that $u^*=0$ on $(0,t_0)$. Finally, the mapping $T\in (T^*,+\infty)\mapsto J_{T,\bar{U},\varepsilon}^{(\mathcal{S}_1)}$ is constant.
\item \textsf{Optimal trajectory: }we extend the domain of definition of $F$ to $\RR_+$ by setting $u(\cdot)=0$ on $(T,+\infty)$. One has $F(T)=\varepsilon$, $F'\leqslant 0$ on $(0,T)$ and $F'\geqslant 0$ on $(T,+\infty)$. In particular, $F$ has a unique local minimum at $T$ which is moreover global, equal to $\eps$ and $F'(T)=0$.
\end{enumerate}
\end{theorem}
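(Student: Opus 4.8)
The plan is to exploit the fact that System~\eqref{eq:primal2} is a two-dimensional control-affine system and apply the Pontryagin Maximum Principle (PMP), combined with the monotonicity information on $F$ coming from Proposition~\ref{prop:2 eq steady}. First I would settle the existence questions: nonemptiness of ${\mathcal U}_{T,\overline{U},\varepsilon}^{(\mathcal{S}_1)}$ for $T$ large follows from Proposition~\ref{prop:2 eq steady}, since the constant control $\bar U > U^*$ drives $F$ to $0$, hence below $\varepsilon$ in finite time $\bar T$; one then extends by $0$. Define $\bar T$ as the infimum of such times. Existence of a minimizer $u^*$ is obtained by the direct method: $J$ is linear (hence weakly-$*$ lower semicontinuous) on the bounded set $\{0\le u\le \bar U\}$, which is weakly-$*$ compact in $L^\infty(0,T)$, and the map $u\mapsto F$ is continuous for the weak-$*$ topology by a standard ODE stability argument, so the constraint $F(T)\le\varepsilon$ is preserved in the limit. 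The bound $J_{T,\overline{U},\varepsilon}^{(\mathcal{S}_1)}\le \bar U\,\bar T$ comes from testing with the control equal to $\bar U$ on $(0,\bar T)$ and $0$ afterwards; monotonicity in $T$ and in $\bar U$ of the optimal value follows because enlarging either parameter enlarges the admissible set.

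For the structure of $u^*$, I would write the Hamiltonian $H = -u + p_F f(F,M_s) + p_{M_s}(u-\delta_s M_s)$ (with the convention that we minimize $J$, i.e.\ maximize $-J$), and the adjoint system $\dot p_F = -p_F \partial_F f$, $\dot p_{M_s} = -p_F \partial_{M_s} f + \delta_s p_{M_s}$, with transversality $p_F(T) = -\lambda \le 0$ coming from the terminal inequality constraint $F(T)\le\varepsilon$ (and $p_{M_s}(T)=0$ since $M_s(T)$ is free), $\lambda\ge 0$. The switching function is $\varphi(t) = -1 + p_{M_s}(t)$, so $u^* = \bar U$ when $\varphi>0$, $u^*=0$ when $\varphi<0$, and on a singular arc $\varphi\equiv 0$. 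Here the sign information is crucial: since $\partial_{M_s} f < 0$ (more sterile males hurt fertilization) and $p_F \le 0$ along the trajectory — which I would establish by showing $p_F$ keeps the sign of $p_F(T)=-\lambda$, using $\dot p_F = -p_F\partial_F f$ and that the case $\lambda=0$ forces $p\equiv 0$, excluded by nontriviality — one gets $\dot p_{M_s} = -p_F\partial_{M_s}f + \delta_s p_{M_s} \le \delta_s p_{M_s}$ wherever $p_{M_s}\ge 0$ is relevant; a Gronwall/sign analysis starting from $p_{M_s}(T)=0$ shows $\varphi$ can vanish at most on one interval and that $u^*=0$ near $T$. This yields the bang/singular/zero concatenation $(0,t_0)$–$(t_0,t_1)$–$(t_1,T)$. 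On the singular arc, differentiating $\varphi\equiv 0$ twice in time and substituting the dynamics and adjoint equations produces the algebraic relation that can be solved for $u^*$, giving formula~\eqref{3eq:edo u}; the hypothesis $2\delta_s>\delta_F$ is what makes $\partial^2 f/\partial M_s^2 \ne 0$ so that the inversion is legitimate (this is essentially the order-two singularity / generalized Legendre–Clebsch condition), and "$\bar U$ large enough" ensures the right-hand side of \eqref{3eq:edo u} indeed stays in $(0,\bar U)$.

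The optimal trajectory statement is then read off: $F$ is monotone nonincreasing on $(0,T)$ because, if $F$ were increasing somewhere, one could truncate the control earlier and still satisfy $F(T)\le\varepsilon$ with strictly smaller cost — more precisely I would argue $F'\le 0$ on $(0,T)$ by a contradiction/needle-variation argument together with the invariance region $F\in(0,\bar F]$ from Proposition~\ref{prop:2 eq steady}; once $u\equiv 0$ on $(T,\infty)$, $F$ relaxes back toward $\bar F>\varepsilon$ so $F'\ge 0$ there, whence $T$ is the unique (global) minimum of $F$ and $F(T)=\varepsilon$, $F'(T)=0$. For the uniqueness and the constancy of $T\mapsto J^{(\mathcal{S}_1)}_{T,\bar U,\varepsilon}$ for $T>T^*$: once $T$ is large enough that the optimal singular arc together with the final free arc "fits" strictly inside $(0,T)$ with $t_0>0$, the initial bang arc disappears (taking $u^*=0$ on $(0,t_0)$), the triple $(t_0,t_1)$ and the control on it are determined by the requirement $F(T)=\varepsilon$ together with the singular feedback \eqref{3eq:edo u} integrated backward from the endpoint condition, hence independent of $T$; this forces both uniqueness and $J^{(\mathcal{S}_1)}_{T,\bar U,\varepsilon}$ constant, and defines $T^*(\bar U)$ as the threshold where $t_0$ first becomes positive. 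The main obstacle I expect is the rigorous sign/monotonicity analysis of the switching function $\varphi$ that rules out chattering and more than one singular interval, and, relatedly, verifying that the singular feedback \eqref{3eq:edo u} genuinely takes values in $(0,\bar U)$ — this is where the hypotheses $2\delta_s>\delta_F$ and "$\bar U$ large" are consumed, and it requires a careful quantitative study of the rational function $f$ and its derivatives along the optimal trajectory.
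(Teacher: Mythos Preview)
Your overall architecture matches the paper's: existence by direct method, PMP/adjoint system with switching function, singular arc formula by double differentiation, and a shift argument for uniqueness and constancy of the value for $T>T^*$. Two points, however, are genuinely off.

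First, you misplace the role of $2\delta_s>\delta_F$. The coefficient $\partial^2 f/\partial M_s^2$ is \emph{always} strictly positive (it equals $2\mu F^2\Lambda^3(\alpha F^2+\beta F+\gamma)^2$ in the paper's notation), so invertibility in~\eqref{3eq:edo u} is free. The hypothesis $2\delta_s>\delta_F$ is consumed elsewhere: in the paper one studies the adjoint component $R$ (your $p_{M_s}$) and shows that on each connected component of $\{R>-1/\lambda\}$ every local extremum of $R$ is a minimum. This is obtained by computing $R''$ at a critical point and checking the sign of
\[
\mathfrak U \;=\; \frac{\partial^2 f}{\partial M_s\partial F}\,f \;-\; \frac{\partial^2 f}{\partial M_s^2}\,\delta_s M_s \;-\; \frac{\partial f}{\partial M_s}\frac{\partial f}{\partial F},
\]
and it is precisely the inequality $2\delta_s>\delta_F$ (together with $f\le 0$, i.e.\ $F'\le 0$) that makes $\mathfrak U<0$. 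Symmetrically, ``$\bar U$ large'' is not primarily about keeping the singular feedback below $\bar U$; it is used to force every local extremum of $R$ on $\{R<-1/\lambda\}$ to be a maximum, again via the sign of $R''$ with $u^*=\bar U$. These two extremum-type facts are what actually exclude more than one switching interval and chattering; a bare ``Gronwall/sign analysis from $p_{M_s}(T)=0$'' will not deliver this, because $R'=-(\partial_{M_s}f)\,Q+\delta_s R$ allows $R$ to oscillate unless you control $R''$ at its critical points.

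Second, the monotonicity $F'\le 0$ on $(0,T)$ is not a one-line truncation argument. The paper's proof first establishes $F'\le 0$ \emph{before} the PMP analysis (it is used to get $f\le 0$, which feeds into the sign of $\mathfrak U$ above). The argument is a careful comparison: one shows that if $F'>0$ on some $(\theta_1,\theta_2)$, then one can find $\tau_1<\tau_2$ with $F(\tau_1)<F(\tau_2)$ and $M_s(\tau_1)\ge M_s(\tau_2)$, and build a shifted control $u=u^*(\cdot-\tau_2+\tau_1)\mathds 1_{(\tau_2-\tau_1,\tau_2)}+u^*\mathds 1_{(\tau_2,T)}$ that reaches a strictly smaller $F(T)$, contradicting $F(T)=\varepsilon$. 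Your ``truncate earlier'' intuition points in the right direction but does not by itself produce the required $(F,M_s)$ ordering needed for the comparison.
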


Let us comment on this result. Under the assumptions of Theorem~\ref{theo:opt 2} (in particular that $T$ and $\bar U$ are large enough), the infinite dimensional control problem \eqref{eq:opt2} can be reduced to a two dimensional one: more precisely, one only needs to determine the two parameters $t_0$ and $t_1$.  
  
 As expected, the horizon of time $T$ fixed for the control to reach a desired number of adult female mosquitoes has a strong influence on the form of the optimal control.
  The longer $T$ is, the smaller is the number of sterilized males needed. However, there is a maximal time $T^*$ above which the number of released mosquitoes needed to reach the desired state is stationary with respect to the horizon of time.

\begin{remark}{{(Stabilization and feedback)}}
Note that our results can be interpreted in terms of stabilization: indeed, it follows from Theorem~\ref{theo:opt 2}, and in particular from the description of the optimal trajectory that, under the assumptions of this theorem, the differential system
$$
\left\{\begin{array}{ll}
\displaystyle\frac{d}{dt}\begin{pmatrix}F\\ M_s\end{pmatrix}  = \begin{pmatrix} f(F,M_s)\\ u - \delta_s M_s \end{pmatrix} & \text{in }[0,+\infty)
\\
\text{with }
u =\frac{\frac{\partial f}{\partial M_s}(F,M_s)\frac{\partial f}{\partial F}(F,M_s)+ \frac{\partial^2 f}{\partial M_s^2}(F,M_s) \delta_s M_s -\frac{\partial^2 f}{\partial M_s\partial F}(F,M_s)f(F,M_s) }{\frac{\partial^2 f}{\partial M_s^2}(F,M_s)} & ,
\end{array}\right.
$$
complemented with the initial conditions $F(0) = \bar{F}$ and $M_s(0)=0$, satisfies
$$
\lim_{t\to +\infty}F(t)=0,
$$
in other words, the control function $u(\cdot)$ above defines a feedback control stabilizing System~\eqref{eq:primal2}.
\end{remark}


\subsection{A dedicated algorithm}\label{sec:algo}
In this section, we introduce an elementary algorithm for computing the (unique) solution to Problem~\eqref{eq:opt2}. The chosen approach mainly rests upon the precise description of the optimizer (whenever $T$ and $\bar U$ are large enough) provided in Theorem~\ref{theo:opt 2}.

Let us describe our approach. With the notations of Theorem~\ref{theo:opt 2}, we assume that $T>T^*$ so that the optimal control is unique. We will take advantage of its particular form, more precisely that there exist $0<t_0<t_1<T$ such that  $u^*=0$ on $(0,t_0)$, $u^*$ is given by \eqref{3eq:edo u} on $(t_0,t_1)$, $u^*=0$ on $(t_1,T)$ and $F(T)=\eps$. 

To this aim, let us introduce for $\tau_1\in [0,T]$ the auxiliary Cauchy system
\begin{equation}
\left\{\begin{array}{l}
\displaystyle\frac{d}{dt}\begin{pmatrix}F\\ M_s\end{pmatrix}  = \begin{pmatrix} f(F,M_s)\\ u_{\tau_1} - \delta_s M_s \end{pmatrix}  \text{\quad in }[0,+\infty)
\\
\text{with }
u_{\tau_1} =\frac{\frac{\partial f}{\partial M_s}(F,M_s)\frac{\partial f}{\partial F}(F,M_s)+ \frac{\partial^2 f}{\partial M_s^2}(F,M_s) \delta_s M_s -\frac{\partial^2 f}{\partial M_s\partial F}(F,M_s)f(F,M_s) }{\frac{\partial^2 f}{\partial M_s^2}(F,M_s)}\mathds{1}_{(0,\tau_1)}  ,
\end{array}\right.
  \label{eq:primal3}
\end{equation}
complemented with the initial conditions $F(0) = \bar{F}$ and $M_s(0)=0$, whose solution, associated to the control function $u_{\tau_1}$, will from now on be denoted $(F^{\tau_1},M_s^{\tau_1})$. Let $T>T^*$.

The uniqueness property for Problem~\eqref{eq:opt2} allows us to get the following relations between the solution to Problem~\eqref{eq:opt2} and the control $u_{\tau_1}$.
\begin{property}\label{prop:num}
Under the assumptions and with the notations of Theorem~\ref{theo:opt 2}, let $\bar U>U^*$ (in particular we consider $T\geqslant T^*(\bar U)$ so that the conclusion (ii) holds true). There exists $\varepsilon_0>0$ such that, if $\varepsilon \in (0,\varepsilon_0)$, then 
\begin{enumerate}
\item for every $\tau_1\in [0,T)$,
there exists a unique $\tau_2(\tau_1)\in (\tau_1,+\infty)$ such that 
$F^{\tau_1}$ is first strictly decreasing on $(0,\tau_2(\tau_1))$, and then
strictly increasing on $(\tau_2(\tau_1),+\infty)$. 
\item the value function
$$
\psi:   \tau_1\in [0,T)\mapsto \min_{t\in [0,\infty]} F^{\tau_1}(t)=F^{\tau_1}(\tau_2(\tau_1))
$$
is decreasing.
\item the optimal control $u^*$ solving Problem~\eqref{eq:opt2} satisfies
\begin{equation}\label{eq:opt2 bis}
\begin{array}{c}
u^*(t)=\left\{\begin{array}{ll}
0&\mbox{ if }t\in(0,T-\tau_2(\tau_1)),\\
{u_{\tau_1}}(t-T+\tau_2(\tau_1))&\mbox{ otherwise}
\end{array}\right.\\
\end{array}
\end{equation}
a.e. on $(0,T)$, where $\tau_1$ denotes the unique solution on $[0,T)$ to the equation $\psi(\tau_1)=\varepsilon$.
\end{enumerate}
\end{property}
%

We now construct an efficient algorithm based on this result to solve Problem~\eqref{eq:opt2}, involving a bisection type method.
{The resulting Algorithm is described in Figure \ref{alg:Figure1}.}

\begin{algorithm}[!htbp]
\captionof{figure}{
{Algorithm by dichotomy to solve Problem~\eqref{eq:opt2}}}
\label{alg:Figure1}
\RestyleAlgo{tworuled}\vspace*{2.5mm}
\SetKwProg{init}{Initialization}{:}{}\vspace*{1.5mm}
\init{}{\vspace{3mm}
Let $n\in\mathbb{N}^*$, $\tau_{1,\mbox{\scriptsize{min}}}=0$ and 
$\tau_{1,\mbox{\scriptsize{max}}}=T$
}
\vspace*{2.5mm}
\SetKwProg{while}{While}{ do}

\while{$i \leq n$}
{
\medskip
\begin{algorithmic}[1]
 \State
 $\tau_{1,\mbox{\scriptsize{test}}}= (\tau_{1,\mbox{\scriptsize{min}}}+\tau_{1,\mbox{\scriptsize{max}}})/2$ 
 \State 
  Solve  \eqref{eq:primal3} on $(0,T)$ for $\tau_1=\tau_{1,\mbox{\scriptsize{test}}}$ and let {$u^{\tau_1}$} be the function  given by \eqref{eq:primal3}.  

  \State \If{$\min_{(0,T)} F^{\tau_1}<\varepsilon$}
  
$\tau_{1,\mbox{\scriptsize{max}}}=\tau_{1,\mbox{\scriptsize{test}}}$

\State   \Else{$\tau_{1,\mbox{\scriptsize{min}}}=\tau_{1,\mbox{\scriptsize{test}}}$}
 
\end{algorithmic}
}
\SetKwProg{init}{End}{:}{}\vspace*{1.5mm}
\init{}{\vspace{3mm}
Let $\tau_1^n=(\tau_{1,\mbox{\scriptsize{min}}}+\tau_{1,\mbox{\scriptsize{max}}})/2
$, $t_1^n=\mbox{argmin}_{(0,T)} F^{\tau_1^n}$ and 
$$u_n(t)=\left\{\begin{array}{ll}
0&\mbox{ if }t\in(0,T-t_1^n),\\
u(t-T+t_1^n)&\mbox{ otherwise},
\end{array}\right.$$
for each $t\in(0,T)$.
}

\label{algo:dicho}
\end{algorithm}

%
%
%

\subsection{Numerical simulations}\label{sec:num}

In order to illustrate our main results (Theorems~\ref{theo:opt 1} and \ref{theo:opt 2}), we provide some numerical simulations of the optimal control problems \eqref{eq:opt2} and \eqref{eq:opt1}.
The codes are available on \texttt{github}\footnote{ 
\url{github.com/michelduprez/Optimal-control-strategies-for-the-sterile-mosquitoes-technique.git}
}.

We use the parameter values provided in Table~\ref{tab:steril}, that come from \cite[Table 1-3]{bossin}.
As in \cite{bossin}, in order to get results relevant for an island
of $74\,ha$ (hectares) with an estimated male population of about $69\,ha^{-1}$, the total number of males at the beginning of the experiment is taken equal to $M^*=69\times 74=5106$.
Regarding System~\eqref{eq:S1}, we assume that $M_s=0$ at the equilibrium, and we then deduce that
\begin{equation*}
\bar E = \frac{\delta_M}{(1-\nu)\nu_E} \bar M,  
\qquad
\bar F=\frac{ 
\nu\nu_E }{\delta_F}\bar E= 
 \frac{\nu\delta_M}{\delta_F(1-\nu)} \bar M 
\end{equation*}
and we thus evaluate the environmental capacity for eggs as
\begin{equation*}
  K=\left(1-\frac{ \delta_F\big( \nu_E + \delta_E \big) }{\beta_E 
  \nu\nu_E}\right)^{-1}\bar E.
\end{equation*}

Regarding System~\eqref{eq:primal2}, we consider the same initial quantity of females $\bar F$
 and get that the environmental capacity $K$ has the same expression as above.


To compute the solutions to Problem~\eqref{eq:opt2} and  \eqref{eq:opt1}, we use the opensource optimization routine \textsc{GEKKO} (see \cite{beal2018gekko}) 
which solves the optimization problem thanks to the \textsc{APOPT} (Advanced Process OPTimizer) library, a software package for solving large-scale optimization problems (see \cite{hedengren2012apopt}).
We will also compare the numerical solution obtained when solving Problem~\eqref{eq:opt2} with the one obtained by Formula~\eqref{eq:opt2 bis}, by applying our algorithm (see Figure~\ref{alg:Figure1}).

Figures \ref{fig:steril1}, \ref{fig:steril2} and \ref{fig:steril3} gather the solutions of the optimal control problems \eqref{eq:opt2}, \eqref{eq:opt1} and the function given by \eqref{eq:opt2 bis} for $T=60, 150, 200$, {$\overline{U}=5000$}, $\nu_E=0.05$ and $\varepsilon=\bar F/4$.

The time interval $(0,T)$ is discretized with $300$ points. 
We do not give the solution to \eqref{eq:opt2 bis}  for $T=60$  in Figure \ref{fig:steril1}, since Algorithm \ref{algo:dicho} cannot be applied in this situation (indeed, $T$ is not large enough and the {form of the corresponding solution is not convenient to apply a bisection procedure}).

 {For these parameter values, the bound $U^*$ given in \eqref{eq:U*} is approximately equal to $9620$. We remark that this bound is not optimal since, as we can see in Figure \ref{fig:steril1}-\ref{fig:steril3}, the optimal control problems admits a solution for $\overline{U}=5000$.}

These simulations allow us to recover the theoretical results of Section~\ref{sec:opt}. {Indeed, the optimal control has the structure described in Theorem~\ref{theo:opt 2}: at the beginning of the time interval, it vanishes (Figure~\ref{fig:steril1}) or it is equal to $\bar U$ (Figures~\ref{fig:steril2}-\ref{fig:steril3}), then we observe a singular part and finally a time interval for which the control vanishes (Figures~\ref{fig:steril1}-\ref{fig:steril3}).}

Moreover, it is interesting to observe that, even if we do not have a proof of this fact, the optimal controls for the different problems look very close. This is confirmed by Table \ref{tab:table1}, where the quantity of sterile insects is computed for different values of $\nu_E$.
{We remark also that $\nu_E$ has not a big influence on the total number of released mosquitoes.}

{We can observe that the length of the ``waiting time'' $T -\tau_2(\tau_1)$ for which $u^* = 0$ is not impacted by the total length of the time interval $T$. Indeed, this ``waiting time'' can be biologically and intuitively interpreted by the fact that the sterilized males continue to act after the last release.}

Regarding Problem~\eqref{eq:opt2}, since for $T$ large enough, the optimal control vanishes on an interval at the beginning of the experiment, the system stays at the equilibrium on this interval. Hence, with the notations of Theorem~\ref{theo:opt 2}, the optimal time  $T_{\mbox{\scriptsize{opt}}}$  to control the system with a singular part is equal to $T-t_0$. 
As illustrated in Table \ref{tab:table2}, this optimal time  $T_{\mbox{\scriptsize{opt}}}$ is not very sensitive to the choice of optimal control problem, namely  \eqref{eq:opt2}, \eqref{eq:opt1} and \eqref{eq:opt2 bis} and also of the value of $\nu_E$.  

In Table \ref{tab:table3}, we {also} provide, for different discretizations of the time interval, the computation time for solving Problems~\eqref{eq:opt2} and  \eqref{eq:opt1} with the open-source optimization routine \textsc{GEKKO} (default parameters) and the one for solving \eqref{eq:opt2 bis} thanks to Algorithm~\ref{algo:dicho} ($n=50$ iterations). 
Here, we have used an \texttt{Intel CORE i5 8th Gen.}
As expected, one can notice that Algorithm~\ref{algo:dicho} yields a much faster resolution than the other approaches.

{We finally give in Figure~\ref{fig:steril long time} the solution of the optimal control problem~\eqref{eq:opt1} for a small $\varepsilon=\bar F/1e4$ and a large final time $T$ thanks to our numerical algorithm (see Figure~\ref{alg:Figure1}) (the numerical resolution thanks to Gekko does not converge). We observe that in this case the control is increasing then decreasing. In other words, we act with large releases at the beginning of the time interval and then with small releases. Even if we do not take into account the Allee effect, we can remark that we do not need to release a lot of sterilized males at the end of the time interval.
}

\begin{figure}[H]
\begin{center}~\hfill

\hfill~

%
\end{center}
\caption{Solution of the optimal control problems \eqref{eq:opt2}, \eqref{eq:opt1}  and \eqref{eq:opt2 bis} 
with  $T=150$, $\overline{U}=5000$, $\nu_E=0.05$,  and $\varepsilon=\bar F/4$.}
\label{fig:steril2}
\end{figure}

\begin{figure}[H]
\begin{center}~\hfill

\hfill~
\end{center}
\caption{{Solution of the optimal control problems  \eqref{eq:opt2 bis} 
with  $T=600$, $\overline{U}=20000$, $\nu_E=0.05$,  and $\varepsilon=\bar F/1e4$.}}
\label{fig:steril long time}
\end{figure}
 }

\color{black}

\section{Proofs of the main results}\label{sec:proof main results}

\subsection{Proof of Theorem \ref{theo:opt 1}}
In the whole proof, we will denote by $(f_E(E,F)$, $f_M(E,M)$, $f_F(E,M,F,M_s))^\top$ the right-hand side of the differential subsystem of System~\eqref{eq:S1} satisfied by $(E,M,F)^\top$, namely
$$
f_E(E,F) = \beta_E F \left(1-\frac{E}{K}\right) - \big( \nu_E + \delta_E \big) E,$$
$$f_M(E,M) = (1-\nu)\nu_E E - \delta_M M$$and$$
 f_F(E,M,F,M_s)=\nu\nu_E E \frac{M}{M+\gamma_s M_s} - \delta_F F.
$$

We first point out that system \eqref{eq:S1} enjoys a monotonicity property with respect to the control $u$.
\begin{lemma}\label{monotonu}
  Let $u_1, u_2\in L^{\infty}(0,T;\RR_+)$ be such that  $u_1\geqslant u_2$. Let us assume that \eqref{init} holds.
  
  Then, the associated solutions $(E_1,M_1,F_1,M_{s1})$, $(E_2,M_2,F_2,M_{s2})$ to System \eqref{eq:S1} respectively associated to $u_1$ and $u_2$ satisfy $(E_1,M_1,F_1)\leqslant (E_2,M_2,F_2)$, the inequality being understood component by component.
\end{lemma}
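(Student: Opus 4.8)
The statement is a monotonicity (comparison) result for the cooperative-type subsystem $(E,M,F)$ of \eqref{eq:S1}, where the control $u$ enters only through $M_s$, which in turn appears in $f_F$ only via the decreasing factor $M/(M+\gamma_s M_s)$. The plan is to reduce the claim to a differential-inequality / invariant-region argument. First I would record the elementary observation that $M_{s}$ is monotone in $u$: since $\frac{d}{dt}(M_{s1}-M_{s2}) = (u_1-u_2) - \delta_s(M_{s1}-M_{s2})$ with $M_{s1}(0)=M_{s2}(0)=0$ and $u_1\geqslant u_2$, Duhamel's formula gives $M_{s1}(t)-M_{s2}(t) = \int_0^t e^{-\delta_s(t-\sigma)}\bigl(u_1(\sigma)-u_2(\sigma)\bigr)\,d\sigma \geqslant 0$, hence $M_{s1}\geqslant M_{s2}$ on $[0,T]$.

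Next I would set $(e,m,g) := (E_2-E_1,\,M_2-M_1,\,F_2-F_1)$, which starts at $(0,0,0)$ because of the common initial data \eqref{init}, and show $(e,m,g)\geqslant 0$ componentwise. The idea is that the $(E,M,F)$-dynamics is quasi-monotone (cooperative): $\partial f_E/\partial F\geqslant 0$, $\partial f_M/\partial E\geqslant 0$, $\partial f_F/\partial E\geqslant 0$, $\partial f_F/\partial M\geqslant 0$, while the only off-sign coupling, $\partial f_F/\partial M_s\leqslant 0$, works in our favor because $M_{s2}\leqslant M_{s1}$. Concretely, write the equation for $g$ as
\begin{equation*}
\frac{dg}{dt} = \nu\nu_E\!\left(\frac{E_2 M_2}{M_2+\gamma_s M_{s2}} - \frac{E_1 M_1}{M_1+\gamma_s M_{s1}}\right) - \delta_F g,
\end{equation*}
and decompose the bracket by successively changing one argument at a time, so that it equals $a(t)\,e + b(t)\,m + c(t)\,(M_{s1}-M_{s2})$ with $a,b,c\geqslant 0$ locally bounded (using $E_i,M_i>0$ and $M_{si}\geqslant 0$, which hold by Proposition~\ref{prop:4 eq steady}); similarly $de/dt = \alpha(t)e + \beta(t)g$ and $dm/dt = \gamma(t)e + \kappa(t)m$ with $\beta,\gamma\geqslant 0$. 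The vector $w=(e,m,g)^\top$ then satisfies a linear ODE $w' = A(t)w + r(t)$ with $w(0)=0$, where $A(t)$ is (essentially) Metzler — its off-diagonal entries are nonnegative — and $r(t) = (0,0,\nu\nu_E\,c(t)(M_{s1}-M_{s2}))^\top \geqslant 0$. A standard comparison lemma for cooperative linear systems (or: apply the variation-of-constants formula after the substitution $w = e^{-\lambda t}\tilde w$ with $\lambda$ large enough that $A(t)+\lambda I$ has nonnegative entries, then a Gronwall/bootstrap or Picard-iteration argument showing the iterates stay nonnegative) yields $w(t)\geqslant 0$ for all $t\in[0,T]$, i.e. $(E_1,M_1,F_1)\leqslant(E_2,M_2,F_2)$.

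The main technical obstacle is purely bookkeeping: one must justify that the coefficient functions $a,b,c,\alpha,\beta,\gamma,\kappa$ are well-defined and (locally) bounded, which requires the a priori positivity and boundedness of the trajectories $(E_i,M_i,F_i,M_{si})$ — this is exactly guaranteed by the invariant-region statement in Proposition~\ref{prop:4 eq steady}(2), since \eqref{init} puts the initial data at the persistence equilibrium, so $E_i\in(0,\bar E]$, $M_i\in(0,\bar M]$, $F_i\in(0,\bar F]$, $M_{si}\geqslant 0$. In particular the denominators $M_i+\gamma_s M_{si}$ stay bounded away from $0$ (as $M_i>0$, though one should note $M_i$ could a priori approach $0$; here it suffices that $M_i>0$ on $[0,T]$ and, by continuity, $\inf_{[0,T]}M_i>0$, which is enough for local boundedness of $b,c$ on the compact $[0,T]$). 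Once these estimates are in place, the cooperative comparison argument is routine. I would present it via the "shift by $e^{-\lambda t}$ and iterate" route, as it avoids invoking an external comparison theorem and keeps the proof self-contained.
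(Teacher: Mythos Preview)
Your proposal is correct and rests on the same structural observation as the paper's proof: both establish $M_{s1}\geqslant M_{s2}$ via Duhamel's formula and then exploit that the $(E,M,F)$-subsystem is cooperative (the off-diagonal partials $\partial f_E/\partial F$, $\partial f_M/\partial E$, $\partial f_F/\partial E$, $\partial f_F/\partial M$ are nonnegative and $\partial f_F/\partial M_s\leqslant 0$). The only difference is one of packaging: the paper simply checks these sign conditions and invokes the Kamke--M\"uller comparison theorem by reference, whereas you unfold that theorem into a self-contained argument (linearize the difference, obtain a Metzler $A(t)$ with nonnegative forcing, shift by $e^{\lambda t}$ so the matrix becomes entrywise nonnegative, then Picard-iterate). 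Your route is longer but has the advantage of being self-contained and of making explicit where the a~priori bounds from Proposition~\ref{prop:4 eq steady} are actually used; the paper's route is shorter but relies on the reader knowing the cited comparison principle.
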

\begin{proof}
According to Proposition~\ref{prop:4 eq steady}, we have $(E,M,F)\in [0,\bar{E}]\times[0,\bar{M}]\times[0,\bar{F}]$.
  Noting that 
  $$
  M_{s_i}(t)=\int_0^t  u_i(s) e^{\delta_s (s-t)} \,ds, \quad i=1,2,
  $$
  it follows that $M_{s1} \geqslant M_{s2}$. 
Hence, the monotonicity property follows since one has  
$$
\frac{\pa f_E}{\pa F}\geqslant 0, \quad \frac{\pa f_M}{\pa E} \geqslant 0, \quad \frac{\pa f_F}{\pa E} \geqslant 0, \quad \frac{\pa f_F}{\pa M}\geqslant 0, \quad \frac{\pa f_F}{\pa M_s} \leqslant 0,
$$
and therefore the so-called Kamke-M\"uller conditions (see e.g. \cite{HirSmi.MDS}) hold true.
\end{proof}

Let us investigate the existence of an optimal control for Problem~\eqref{eq:opt1}.
\begin{lemma}\label{lemma:4 eq exist}
Let $\varepsilon\in(0,\bar{F})$ and $\overline{U}>U^*$. There exists $\bar{T}(\bar U)>0$ such that for all $T\geqslant \bar{T}(\bar U)$, the set ${\mathcal U}_{T,\bar{U},\varepsilon}^{(\mathcal{S}_2)}$ is nonempty and for such a choice of $T$, Problem~\eqref{eq:opt1} has a solution $u^*$.
    
Moreover, one has $J_{T,\overline{U},\varepsilon}^{(\mathcal{S}_2)}\leqslant \bar{U}\, \bar{T}$ (where $J_{T,\overline{U},\varepsilon}^{(\mathcal{S}_2)}$ is defined by \eqref{optval}) and $J_{T,\overline{U},\varepsilon}^{(\mathcal{S}_2)}$ is non-increasing with respect to $T\geqslant \bar{T}$ and $\overline{U}>U^*$.
\end{lemma}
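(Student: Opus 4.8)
\textbf{Proof plan for Lemma~\ref{lemma:4 eq exist}.}

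The plan is to establish the three assertions (nonemptiness of ${\mathcal U}_{T,\bar U,\varepsilon}^{(\mathcal{S}_2)}$ for $T$ large, existence of a minimizer, and the bound and monotonicity of the optimal value) in that order, exploiting the qualitative picture provided by Proposition~\ref{prop:4 eq steady} and the comparison principle of Lemma~\ref{monotonu}. First I would construct an explicit admissible control to prove nonemptiness: take $u\equiv \bar U$ on $(0,T)$. Since $\bar U > U^*$, the second point of Proposition~\ref{prop:4 eq steady} guarantees that the corresponding trajectory converges to the extinction equilibrium as $t\to+\infty$; in particular there exists a time $\bar T = \bar T(\bar U)>0$ such that $F(\bar T)\leqslant \varepsilon$. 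By the stability property in Proposition~\ref{prop:4 eq steady} (invariance of $(0,\bar F]$ for $F$ under non-negative controls) and the fact that $F$ decreases past the level $\varepsilon$ once it has reached it — here I would use Lemma~\ref{monotonu} together with monotonicity of the constant-control flow, or simply note that the constant control $\bar U$ keeps the system attracted to $0$ — one gets $F(T)\leqslant\varepsilon$ for every $T\geqslant\bar T$. Hence $u\equiv\bar U\in{\mathcal U}_{T,\bar U,\varepsilon}^{(\mathcal{S}_2)}$ for all $T\geqslant\bar T$, and this same control yields $J(u)=\bar U\,T$; evaluating instead the control equal to $\bar U$ on $(0,\bar T)$ and $0$ on $(\bar T,T)$ (which is still admissible, because once $F\leqslant\varepsilon$ at time $\bar T$ one checks via Lemma~\ref{monotonu}, comparing with the control that stays at $\bar U$, that $F$ cannot climb back above the constant-control trajectory, and one should in fact argue $F$ stays $\leqslant\varepsilon$ — this requires a short separate argument, see below) gives $J_{T,\bar U,\varepsilon}^{(\mathcal{S}_2)}\leqslant\bar U\,\bar T$.

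Next, for existence of a minimizer I would use the direct method in the calculus of variations. Let $(u_n)$ be a minimizing sequence in ${\mathcal U}_{T,\bar U,\varepsilon}^{(\mathcal{S}_2)}$. Since $0\leqslant u_n\leqslant\bar U$, the sequence is bounded in $L^\infty(0,T)$, hence up to a subsequence $u_n\rightharpoonup u^*$ weakly-$\ast$ in $L^\infty(0,T)$, and $u^*$ still satisfies $0\leqslant u^*\leqslant\bar U$ a.e. The corresponding states: $M_{s,n}(t)=\int_0^t u_n(s)e^{\delta_s(s-t)}\,ds$ converges (uniformly on $[0,T]$, by weak-$\ast$ convergence tested against the fixed kernel $s\mapsto e^{\delta_s(s-t)}$, plus equicontinuity) to $M_s^*(t)=\int_0^t u^*(s)e^{\delta_s(s-t)}\,ds$. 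The remaining components $(E_n,M_n,F_n)$ solve a differential system with right-hand side $(f_E,f_M,f_F)$ that is Lipschitz on the invariant compact box $[0,\bar E]\times[0,\bar M]\times[0,\bar F]$ (with $M_{s,n}$ entering through the smooth bounded term $M/(M+\gamma_s M_{s,n})$, using $M_{s,n}\geqslant0$ and the a priori positivity of $M$ on compact time intervals — here one should be mildly careful near $M=0$, but the initial data \eqref{init} have $M(0)=\bar M>0$ and one gets a uniform lower bound $M_n(t)\geqslant\bar M e^{-\delta_M T}$ on $[0,T]$). By Arzelà–Ascoli and continuous dependence, $(E_n,M_n,F_n)\to(E^*,M^*_{\rm state},F^*)$ uniformly, and the limit solves the same ODE system with control $u^*$. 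Passing to the limit in the constraint $F_n(T)\leqslant\varepsilon$ gives $F^*(T)\leqslant\varepsilon$, so $u^*\in{\mathcal U}_{T,\bar U,\varepsilon}^{(\mathcal{S}_2)}$. Finally $J(u^*)=\int_0^T u^*=\lim\int_0^T u_n=J_{T,\bar U,\varepsilon}^{(\mathcal{S}_2)}$ by weak-$\ast$ convergence tested against the constant function $1$, so $u^*$ is optimal.

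For the monotonicity of the optimal value I would argue as follows. If $T_1\leqslant T_2$ (both $\geqslant\bar T$), any $u\in{\mathcal U}_{T_1,\bar U,\varepsilon}^{(\mathcal{S}_2)}$ extended by $0$ on $(T_1,T_2)$ is admissible for the horizon $T_2$: indeed, once $F(T_1)\leqslant\varepsilon$, Lemma~\ref{monotonu} (compare the extended control, which is $\geqslant0$, with the control that is identically $0$ after $T_1$) together with the invariance of $(0,\bar F]$ shows that one only needs to check $F$ does not exceed $\varepsilon$ afterwards; since $F'=f_F$ and at the persistence equilibrium $F=\bar F$ with $\varepsilon<\bar F$, the relevant fact is that the uncontrolled flow, started below $\bar F$, stays below $\bar F$, but to stay below $\varepsilon$ one should instead keep releasing — so more carefully, I would not extend by zero but observe directly that ${\mathcal U}_{T_1,\bar U,\varepsilon}^{(\mathcal{S}_2)}$ embeds into ${\mathcal U}_{T_2,\bar U,\varepsilon}^{(\mathcal{S}_2)}$ by padding with the constant control $\bar U$ on $(T_1,T_2)$, which by Proposition~\ref{prop:4 eq steady} keeps $F$ decreasing, hence $\leqslant\varepsilon$; this padding changes $J$ by $\bar U(T_2-T_1)$, which is not what we want. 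The clean statement is rather: $J_{T,\bar U,\varepsilon}^{(\mathcal{S}_2)}$ non-increasing in $T$ follows because a control optimal for $T_2$, suitably time-translated so that its ``action'' is concentrated near the end, can be realized within horizon $T_1$ only if... — and this is precisely the subtle point. I expect the main obstacle to be exactly this: showing that enlarging the horizon cannot increase the minimal released quantity, which at this stage (before the refined Theorem~\ref{theo:opt 1}) should be obtained by the monotonicity Lemma~\ref{monotonu} applied to a time-shifted competitor, using that $F$ started at $\bar F$ under zero control stays at $\bar F$, so a horizon-$T_2$ optimal control can be preceded by a plateau at the equilibrium of length $T_2-T_1$ removed, i.e. restricted to its last $T_1$ units of time after discarding an initial zero-phase; the existence of such an initial zero-phase for large $T$ is not yet known here, so the monotonicity in $T$ should be proved by comparing the constraint sets directly: any admissible $u$ for $T_1$ gives, via $\tilde u(t)=u(t-(T_2-T_1))\mathds{1}_{(T_2-T_1,T_2)}(t)$, an admissible control for $T_2$ with the same cost, because the system sits at equilibrium on $(0,T_2-T_1)$. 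Monotonicity in $\bar U$ is immediate since $\bar U_1\leqslant\bar U_2$ implies ${\mathcal U}_{T,\bar U_1,\varepsilon}^{(\mathcal{S}_2)}\subseteq{\mathcal U}_{T,\bar U_2,\varepsilon}^{(\mathcal{S}_2)}$, whence the infimum can only decrease.
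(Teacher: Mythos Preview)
Your existence argument via the direct method is correct and essentially identical to the paper's (minimizing sequence, Banach--Alaoglu for weak-$\ast$ compactness, $W^{1,\infty}$ bound on $M_s$ plus Ascoli to pass to the limit in the state, then in the terminal constraint). Your monotonicity in $\bar U$ is also fine and matches the paper. And for monotonicity in $T$ you do eventually arrive at the paper's argument: given $u$ admissible on horizon $T_1$, define $\tilde u(t)=u(t-(T_2-T_1))\mathds{1}_{(T_2-T_1,T_2)}(t)$; since the initial data sit at the persistence equilibrium and $u=0$ leaves the equilibrium fixed, the system is stationary on $(0,T_2-T_1)$ and then reproduces the $T_1$-trajectory, so $\tilde u$ is admissible on $T_2$ with the same cost.

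There is, however, a genuine gap in your derivation of the bound $J_{T,\bar U,\varepsilon}^{(\mathcal{S}_2)}\leqslant \bar U\,\bar T$. You propose the control equal to $\bar U$ on $(0,\bar T)$ and $0$ on $(\bar T,T)$, but this control is \emph{not} admissible: once you switch off at $\bar T$ with $F(\bar T)\leqslant\varepsilon$, the uncontrolled system is attracted back toward the persistence equilibrium $\bar F>\varepsilon$, so $F(T)$ can (and typically will) climb back above $\varepsilon$. You flag this yourself (``this requires a short separate argument'') but never resolve it, and there is no short argument that rescues that particular control. The fix is to use the \emph{same time-shift trick you discovered for monotonicity}: take instead the control $\bar U\,\mathds{1}_{[T-\bar T,T]}$, which is zero first (system frozen at equilibrium) and $\bar U$ last (drives $F$ down to $\varepsilon$ in time $\bar T$). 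This is admissible with cost exactly $\bar U\,\bar T$, and it is precisely what the paper does. Once you notice this, the nonemptiness, the bound, and the monotonicity in $T$ all flow from the single observation that zero control preserves the initial equilibrium, so you can always prepend a waiting period at no cost.
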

\begin{proof}
 According to Proposition~\ref{prop:4 eq steady}, if $u=\bar{U}$, then $F(t)\to 0$ as $t$ goes to $+\infty$. Hence, for any $\varepsilon\in (0,\bar{F})$ and since $F$ is Lipschitz-continuous, there exists $\bar{T}>0$ such that $F(\bar{T})\leqslant \varepsilon$ meaning that $T\geqslant \bar{T}$, $u=\bar{U} \mathds{1}_{[T-\bar{T},T]} \in \mathcal{U}^{(\mathcal{S}_2)}_{T,\bar{U},\varepsilon}$. The set $\mathcal{U}^{(\mathcal{S}_2)}_{T,\bar{U},\varepsilon}$ is thus nonempty.

Let us now investigate the existence property. For the sake of clarity, we temporarily denote by  $(E^u,M^u,F^u,M_s^u)$ the solution of System~\eqref{eq:S1} associated to $u$.
Let $(u_n)_{n\in \mathbb{N}}$ be a minimizing sequence. 
According to the Banach-Alaoglu Bourbaki theorem {(see e.g. \cite{rudin1991functional})}, the set $\mathcal{U}_{T,\overline{U},\varepsilon}^{(\mathcal{S}_2)}$ is compact for the weak-$*$ topology of $L^{\infty}(0,T)$ and, up to a subsequence, $(u_n)_{n\in \mathbb{N}}$ converges to $u\in L^{\infty}(0,T,[0,\overline{U}])$. The functional $J$ is obviously continuous for the weak-$*$ topology of $L^{\infty}(0,T)$, so that it only remains to prove that $u\in \mathcal{U}_{T,\overline{U},\varepsilon}^{(\mathcal{S}_2)}$, in other words that $F^u(T)\leqslant \varepsilon$. First, one has $$M_s^{u_n}(t)=\int_0^te^{-\delta_s(t-s)}u_n(s)\, ds$$
for all $t\geqslant 0$ and it thus follows that, $(M_s^{u_n})_{n\in \NN}$ is bounded in $W^{1,+\infty}(0,T)$. Thus, it converges up to a subsequence to $M_s^u$ strongly in $C^0([0,T])$ according to the Ascoli theorem {(see e.g. \cite{rudin1991functional})}. By using the same reasoning as above, the sequence $(E^{u_n},M^{u_n},F^{u_n})$ is non-negative, uniformly bounded by above and therefore, the right-hand side of the first three equations of \eqref{eq:S1} is bounded in $W^{1,+\infty}(0,T)$. Using to the Ascoli theorem, we infer that $(F^{u_n})_{n\in\NN}$ converges to $F^u$ in $C^0([0,T])$. The desired conclusion follows by passing to the limit in the inequality $F^{u_n}(T) \leqslant \varepsilon$.

Finally, the monotonicity of $J_{T,\bar{U},\varepsilon}^{(\mathcal{S}_2)}$ with respect to $\bar{U}$ comes from the monotonicity of the admissible control set $\mathcal{U}^{(\mathcal{S}_2)}_{T,\bar{U},\varepsilon}$ with respect to $\bar U$ and $T$ for the inclusion. More precisely, if $\bar{U}_1\leqslant \bar{U}_2$ then $\mathcal{U}^{(\mathcal{S}_2)}_{T,\bar{U}_1,\varepsilon}\subset \mathcal{U}^{(\mathcal{S}_2)}_{T,\bar{U}_2,\varepsilon}$ according to Lemma~\ref{monotonu}.
Moreover, if $T_1\leqslant T_2$, let $u^*_1$ {be} a solution of Problem~\eqref{eq:opt1}. Then, {$u_2=u^*_1 (\cdot -T_2+T_1)  \mathds{1}_{(T_2-T_1,T_2)} \in \mathcal{U}_{T_2,\bar{U},\varepsilon}$} (since $(E,M,F)$ is stationary on $[0,T_2-T_1)$), and is such that $J(u_2) = J_{T_1,\bar{U},\varepsilon}^{(\mathcal{S}_2)}$. Hence, we get by minimality that $J_{T_2,\bar{U},\varepsilon}^{(\mathcal{S}_2)} \leqslant J_{T_1,\bar{U},\varepsilon}^{(\mathcal{S}_2)}$.

Finally, since $\bar u=\bar{U} \mathds{1}_{[T-\bar{T},T]}$ belongs to $\mathcal{U}^{(\mathcal{S}_2)}_{T,\bar{U},\varepsilon}$ for any $\bar{U}\geqslant U^*$ and $T\geqslant \bar{T}(\bar U)$, one has $J_{T,\bar{U},\varepsilon}^{(\mathcal{S}_2)} \leqslant J(\bar u)= \bar{U}\, \bar{T}$.
\end{proof}

In the following result, one shows  that the state constraint is reached by any optimal control.
\begin{lemma}\label{lemma:satur 4 eq}
Let $\bar U> U^*$ and $T\geqslant \bar T(\bar U)$. Let $u^*$ solving Problem~\eqref{eq:opt1} and $(E,M,F,M_s)$ be the corresponding solution to System \eqref{eq:S1}. 
Then, one has necessarily $F(T)=\varepsilon$ and $F(\cdot)\in(\eps,\bar F]$ on $[0,T)$. 
\end{lemma}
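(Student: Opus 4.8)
The plan is to prove the three assertions ``$F\le\bar F$'', ``$F(T)=\varepsilon$'' and ``$F>\varepsilon$ on $[0,T)$'' separately, the first being immediate, the second resting on a scaling argument, and the third on a time‑translation argument combined with the saturation property just obtained. For the easy bound: since \eqref{init} places the initial state at $(\bar E,\bar M,\bar F,0)$ and $u^*\ge0$, point~(2) of Proposition~\ref{prop:4 eq steady} gives $(E,M,F)(t)\in(0,\bar E]\times(0,\bar M]\times(0,\bar F]$ for all $t\ge0$, so $F\le\bar F$ on $[0,T]$; moreover $F'=f_F(E,M,F,M_s)\ge-\delta_F F$ forces $F(t)\ge\bar F e^{-\delta_F t}>0$, so $F(T)\ge\varepsilon$ will follow once we know $F(T)=\varepsilon$ is the only possibility.

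For $F(T)=\varepsilon$, argue by contradiction: suppose $F(T)<\varepsilon$. Then $u^*\not\equiv0$ (otherwise $F\equiv\bar F>\varepsilon$), hence $J(u^*)=\int_0^Tu^*>0$. For $\lambda\in(0,1]$ set $u_\lambda:=\lambda u^*$, which lies in $L^\infty(0,T;[0,\bar U])$, and let $F_\lambda$ be the $F$‑component of the corresponding solution of \eqref{eq:S1}. The map $\lambda\mapsto F_\lambda(T)$ is continuous at $\lambda=1$: this is exactly the compactness argument from the proof of Lemma~\ref{lemma:4 eq exist} (the $M_{s,\lambda}$ are uniformly bounded in $W^{1,\infty}$, hence so are the right‑hand sides of the first three equations of \eqref{eq:S1}, so $F_\lambda\to F$ in $C^0([0,T])$ as $\lambda\to1$). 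Since $F_1(T)=F(T)<\varepsilon$, there is $\lambda_0<1$ with $F_{\lambda_0}(T)<\varepsilon$, so $u_{\lambda_0}\in{\mathcal U}_{T,\bar{U},\varepsilon}^{(\mathcal{S}_2)}$ while $J(u_{\lambda_0})=\lambda_0 J(u^*)<J(u^*)$, contradicting optimality. Hence $F(T)=\varepsilon$.

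For the strict inequality, suppose $F(\tau)\le\varepsilon$ for some $\tau\in[0,T)$; since $F(0)=\bar F$, the infimum $\tau_0$ of such times lies in $(0,T)$, with $F(\tau_0)=\varepsilon$ and $F>\varepsilon$ on $[0,\tau_0)$. Using that $(\bar E,\bar M,\bar F,0)$ is a steady state for $u\equiv0$, the translated control $v:=u^*(\cdot-(T-\tau_0))\mathds{1}_{[T-\tau_0,T]}$ generates the trajectory that stays at $(\bar E,\bar M,\bar F,0)$ on $[0,T-\tau_0]$ and equals $(E,M,F,M_s)(\cdot-(T-\tau_0))$ on $[T-\tau_0,T]$; thus $F^v(T)=F(\tau_0)=\varepsilon$, so $v\in{\mathcal U}_{T,\bar{U},\varepsilon}^{(\mathcal{S}_2)}$, and $J(v)=\int_0^{\tau_0}u^*\le J(u^*)$, which by optimality is an equality. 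Consequently $v$ is also optimal and $u^*=0$ a.e.\ on $(\tau_0,T)$. Moreover, running the same construction from any $\tau\in[0,T)$ with $F(\tau)<\varepsilon$ would produce an optimal control whose $F$‑component at $T$ is $F(\tau)<\varepsilon$, contradicting the saturation proved in the previous paragraph (which applies to every optimizer). Therefore $F\ge\varepsilon$ on all of $[0,T]$, $\tau_0$ is an interior global minimum of $F$, and $F'(\tau_0)=0$, i.e.\ $f_F(E(\tau_0),M(\tau_0),\varepsilon,M_s(\tau_0))=0$.

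It remains to exclude this last configuration: $u^*=0$ on $(\tau_0,T)$, $F(\tau_0)=F(T)=\varepsilon$, $F'(\tau_0)=0$. On $[\tau_0,T]$ one has $M_s(t)=M_s(\tau_0)e^{-\delta_s(t-\tau_0)}$, so the $(E,M,F)$‑dynamics there is control‑free with a known non‑increasing $M_s$; the idea is to compare it, via the Kamke--Müller monotonicity used in Lemma~\ref{monotonu} (here $\partial f_F/\partial M_s\le0$ and $M_s(t)\le M_s(\tau_0)$), with the trajectory obtained by freezing $M_s$ at $M_s(\tau_0)$, and then to analyse that autonomous subsystem: from $f_F=0$ at $\tau_0$ one computes $\dot E(\tau_0)=\beta_E\varepsilon(1-E(\tau_0)/K)-(\nu_E+\delta_E)E(\tau_0)$ and argues that $\varepsilon$ is not the $F$‑value of any steady state of the frozen subsystem (whose equilibria are $F=0$ and, when it exists, the persistence value, which by \eqref{eq:equi} and the reduction of Section~\ref{sec:model} equals a root of $f(\cdot,M_s(\tau_0))$ distinct from $\varepsilon$), so that $F$ cannot remain at the level $\varepsilon$; being confined to $\{F\ge\varepsilon\}$, it must then satisfy $F(T)>\varepsilon$, contradicting $F(T)=\varepsilon$. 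The real work is precisely this last step — turning ``$\varepsilon$ is not an equilibrium level of the control‑free tail dynamics and the state is trapped above $\varepsilon$'' into the quantitative conclusion $F(T)>\varepsilon$, the subcase $M_s(\tau_0)>0$ being the delicate one since the sign of $\dot E(\tau_0)$ is then not automatic; everything before it uses only the monotonicity, continuity and translation‑invariance of the dynamics.
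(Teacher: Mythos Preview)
Your proof of $F\le\bar F$ and of $F(T)=\varepsilon$ matches the paper exactly: the paper also invokes Proposition~\ref{prop:4 eq steady} for the upper bound and uses the scaled control $(1-\eta)u^*$ together with the $C^0$ continuity of the solution map to contradict $F(T)<\varepsilon$.

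For the strict lower bound $F>\varepsilon$ on $[0,T)$, the paper's argument is a single line: if $F(T')=\varepsilon$ for some $T'<T$, then $u_{T'}:=u^*(\cdot-T+T')\mathds{1}_{[T-T',T]}$ lies in the admissible set and $J(u_{T'})<J(u^*)$. The paper does not justify the \emph{strict} inequality (which amounts to $\int_{T'}^{T}u^*>0$); it simply asserts it. You correctly notice that this step needs an argument, derive the non-strict inequality $F\ge\varepsilon$ cleanly, and isolate the residual configuration $F(\tau_0)=F(T)=\varepsilon$, $F'(\tau_0)=0$, $u^*\equiv 0$ on $(\tau_0,T)$. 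However, the dynamical analysis you propose to exclude it is incomplete, as you yourself say. Two concrete problems: (i) the Kamke--M\"uller comparison with $M_s$ frozen at $M_s(\tau_0)$ gives $F\ge F_{\mathrm{frozen}}$, but the frozen trajectory also starts at a tangency $(F=\varepsilon,\,F'=0)$, so this does not yield $F_{\mathrm{frozen}}(T)>\varepsilon$; (ii) ``$\varepsilon$ is not an equilibrium level'' only rules out $F\equiv\varepsilon$, not a trajectory that rises above $\varepsilon$ and returns to $\varepsilon$ at $T$. Your claim that the equilibria of the frozen $(E,M,F)$--subsystem are read off from the reduced function $f(\cdot,M_s(\tau_0))$ of Section~\ref{sec:model} is also not right: that reduction assumed the $E$ and $M$ equations were at quasi-steady state, which is not the case here.

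In short: your first two steps coincide with the paper's, and on the third you are more scrupulous than the paper but do not close the gap. The paper treats the strict inequality as immediate; if you want to complete it rigorously, a shorter route than analysing the tail dynamics is to observe that in the residual case the translated control $v$ is itself optimal with $v\equiv0$ on $[0,T-\tau_0]$, so the \emph{whole} analysis can be rerun on $[T-\tau_0,T]$ with initial data at the persistence equilibrium, and iterate --- but making that terminate still requires an additional ingredient that neither you nor the paper supply.
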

\begin{proof}
Let $u^*$ be a solution to Problem~\eqref{eq:opt1}. Since $\bar{F}$ is a stationary solution and $F(0)=\bar{F}$, one has $F\leqslant \bar{F}$ on $(0,T)$ by using Proposition~\ref{prop:2 eq steady}.
{It remains} to prove that $F$ is bounded below by $\varepsilon$.
 Let us assume by contradiction that the corresponding solution $(E,M,F,M_s)$ to System \eqref{eq:S1} satisfies $F(T)<\varepsilon$.
Let $u_{\eta}:=(1-\eta) u^*$ with $\eta\in(0,1)$ and denote by $(E_{\eta},M_{\eta},F_{\eta},M_{s\eta})$ the corresponding solution to System~\eqref{eq:S1}. Then, by mimicking the arguments of the proof of Lemma~\ref{lemma:4 eq exist}, one easily gets that the mapping $ u\in {\mathcal U}_{T,\bar{U},\varepsilon}^{(\mathcal{S}_2)} \mapsto (E,M,F,M_s)\in C^0([0,T])^4$ is continuous for the weak-$*$ topology of $L^\infty$ and it follows that $F_{\eta}(T)=F(T)+\operatorname{O}(\eta)$ so that $F_{\eta}(T)<\varepsilon$ whenever $\eta$ is small enough. Since  $\int_0^Tu_{\eta}(t)\,dt <\int_0^Tu^*(t)\,dt$, this contradicts the minimality of $u^*$. 

Finally, we claim that $F(\cdot)>\eps$ on $[0,T)$. In the converse case, there exists $T'<T$ such that $F(T')=\eps$. One sees easily that the control $u_{T'}$ defined by $u_{T'}=u^*(\cdot-T+T')\mathds{1}_{[T-T',T]}$ belongs to ${\mathcal U}_{T,\bar{U},\varepsilon}^{(\mathcal{S}_2)}$ and that $J(u_{T'})<J(u^*)$, whence a contradiction.
\end{proof}

Let us now state the necessary optimality conditions for Problem~\eqref{eq:opt1}. To this aim, let us introduce $(P,Q,R,S)$ as the solution to the backward adjoint system 
\begin{equation}\label{eq:dual4}
\left\{\begin{array}{l}
-\ds \frac{d }{d t}\begin{pmatrix}P\\ Q\\R\\S\end{pmatrix} = \left(\begin{array}{cccc}\frac{\partial f_E}{\partial E}(E,F)&\frac{\partial f_M}{\partial E}(E,M)&\frac{\partial f_F}{\partial E}(E,M,F,M_s) & 0 \\
0&\frac{\partial f_M}{\partial M}(E,M)&\frac{\partial f_F}{\partial M}(E,M,F,M_s) & 0 \\
\frac{\partial f_E}{\partial F}(E,F)&0&\frac{\partial f_F}{\partial F}(E,M,F,M_s) & 0 \\
0&0&\frac{\partial f_F}{\partial M_s}(E,M,F,M_s)  & -\delta_s
\end{array}\right)\begin{pmatrix}P\\ Q\\R\\S\end{pmatrix},\\
P(T)=0,Q(T)=0,R(T)=1,~S(T)=0.
\end{array}\right.
\end{equation}
{Let us first determine the differential of $F(T)$ with respect to the control $u$.}
\begin{lemma}\label{lemma:gat 4eq}
Let $u\in \mathcal{U}_{T,\overline{U},\varepsilon}^{(\mathcal{S}_2)}$ and introduce the functional $G$ defined by $G(u)=F(T)$, where $(E,M,F,M_s)$ denotes the unique solution of System \eqref{eq:S1} associated to $u$.
Then, $G$ is differentiable in the sense of Fr\'echet and for every admissible perturbation\footnote{More precisely, we call ``admissible perturbation'' any element of the tangent cone $\mathcal{T}_{u,\mathcal{U}_{T,\overline{U},\varepsilon}^{(\mathcal{S}_2)}}$ to the set $\mathcal{U}_{T,\overline{U},\varepsilon}^{(\mathcal{S}_2)}$ at $u$. Recall that the cone $\mathcal{T}_{u,\mathcal{U}_{T,\overline{U},\varepsilon}^{(\mathcal{S}_2)}}$ is the set of functions $h\in L^\infty(0,T)$ such that, for any sequence of positive real numbers $\varepsilon_n$ decreasing to $0$, there exists a sequence of functions $h_n\in L^\infty(0,T)$ converging to $h$ as $n\rightarrow +\infty$, and $u+\varepsilon_nh_n\in\mathcal{U}_{T,\overline{U},\varepsilon}^{(\mathcal{S}_2)}$ for every $n\in\NN$ (see e.g. \cite{MR1367820}).} $h$, the G\^ateaux-derivative of $G$ at $u$  in the direction $h$ is 
\begin{equation*}
DG(u)\cdot h =\int_0^Th(t)S(t)dt,
\end{equation*}
where $(P,Q,R,S)$ solves System \eqref{eq:dual4}.
\end{lemma}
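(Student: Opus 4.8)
The plan is to compute the Fréchet/Gâteaux derivative of $G(u)=F^u(T)$ by a standard linearization of the state system, then recognize that the adjoint system \eqref{eq:dual4} has been set up precisely so that the linearized state evaluated at $T$ collapses to the announced integral formula. First I would fix $u\in\mathcal{U}_{T,\overline{U},\varepsilon}^{(\mathcal{S}_2)}$ and an admissible perturbation $h\in\mathcal{T}_{u,\mathcal{U}_{T,\overline{U},\varepsilon}^{(\mathcal{S}_2)}}$, and recall that by the Cauchy--Lipschitz theorem together with the a priori bounds from Proposition~\ref{prop:4 eq steady} (which confine $(E,M,F)$ to the compact box $[0,\bar E]\times[0,\bar M]\times[0,\bar F]$ and give a linear-growth bound on $M_s$), the map $u\mapsto (E^u,M^u,F^u,M_s^u)$ is well defined with values in $C^0([0,T])^4$. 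Since $f_E,f_M,f_F$ are smooth on a neighborhood of this box, the classical theory of differentiable dependence of ODE solutions on $L^\infty$ parameters applies: $u\mapsto (E^u,M^u,F^u,M_s^u)$ is Fréchet-differentiable from $L^\infty(0,T)$ to $C^0([0,T])^4$, hence so is $G$, and its derivative in the direction $h$ is $DG(u)\cdot h=(\dot F)(T)$, where $(\dot E,\dot M,\dot F,\dot M_s)$ solves the linearized (variational) system.

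Next I would write out this linearized system explicitly. Differentiating \eqref{eq:S1} with respect to $u$ in the direction $h$ gives
\begin{equation*}
\frac{d}{dt}\begin{pmatrix}\dot E\\ \dot M\\ \dot F\\ \dot M_s\end{pmatrix}
= A(t)\begin{pmatrix}\dot E\\ \dot M\\ \dot F\\ \dot M_s\end{pmatrix} + \begin{pmatrix}0\\0\\0\\ h\end{pmatrix},
\qquad \dot E(0)=\dot M(0)=\dot F(0)=\dot M_s(0)=0,
\end{equation*}
where $A(t)$ is the Jacobian of the right-hand side of \eqref{eq:S1} evaluated at $(E(t),M(t),F(t),M_s(t))$, namely the matrix whose rows are the gradients of $f_E$, $f_M$, $f_F$ and of $u-\delta_s M_s$; note the initial conditions vanish because \eqref{init} does not depend on $u$. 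The adjoint matrix appearing in \eqref{eq:dual4} is exactly $A(t)^\top$ (one reads off that the $(i,j)$ entry of \eqref{eq:dual4} is $\partial_{x_j}(\text{$i$-th component of the vector field})$), and the terminal data $(P,Q,R,S)(T)=(0,0,1,0)$ single out the $F$-component at time $T$.

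Then I would carry out the duality computation. Consider $\frac{d}{dt}\big(P\dot E + Q\dot M + R\dot F + S\dot M_s\big)$. Expanding using the primal linearized equation and the adjoint equation \eqref{eq:dual4}, all the bilinear terms involving $A(t)$ cancel by construction (the $A(t)$ acting on the primal variables cancels against the $-A(t)^\top$ acting on the adjoint variables), leaving only $\frac{d}{dt}\big(P\dot E + Q\dot M + R\dot F + S\dot M_s\big) = S(t)\,h(t)$. Integrating over $[0,T]$ and using $\dot E(0)=\dot M(0)=\dot F(0)=\dot M_s(0)=0$ on the left endpoint and $(P,Q,R,S)(T)=(0,0,1,0)$ on the right endpoint yields $\dot F(T) = \int_0^T h(t)S(t)\,dt$, which is precisely $DG(u)\cdot h$. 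I would also note that $S$ is globally defined and continuous on $[0,T]$ since \eqref{eq:dual4} is a linear system with coefficients that are continuous (indeed smooth composed with the continuous trajectory) and bounded on $[0,T]$.

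The only genuinely delicate point is the differentiable-dependence claim with an $L^\infty$ (rather than continuous) parameter $u$ and on the tangent cone rather than a full neighborhood; I expect this to be the main obstacle to write cleanly, but it is classical — one proves $\|(E^{u+\varepsilon_n h_n},\dots)-(E^u,\dots)-\varepsilon_n(\dot E,\dots)\|_{C^0}=o(\varepsilon_n)$ by Grönwall's lemma, using the uniform Lipschitz bounds on $f_E,f_M,f_F$ on the invariant box and the fact that $M_s$ depends affinely and continuously (in $W^{1,\infty}$) on $u$ via $M_s^u(t)=\int_0^t e^{-\delta_s(t-s)}u(s)\,ds$. One can cite \cite{MR1367820} for the tangent-cone formalism and the resulting first-order conditions. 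Everything else is a routine Grönwall estimate and an integration by parts, so I would keep those at the level of a remark rather than spelling them out in full.
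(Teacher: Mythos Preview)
Your proposal is correct and follows essentially the same approach as the paper: linearize the state system to obtain the variational equations for $(\dot E,\dot M,\dot F,\dot M_s)$, recognize that the matrix in \eqref{eq:dual4} is $A(t)^\top$, and use the duality/integration-by-parts identity to collapse $\dot F(T)$ to $\int_0^T h\,S$. Your justification of the Fr\'echet differentiability is somewhat more explicit than the paper's (which simply invokes the implicit function theorem and variational arguments), but the core argument is identical.
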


\begin{proof}
The Fr\'echet-differentiability of $G$ is standard and follows from the differentiability of the mapping $ u\in \mathcal{U}_{T,\overline{U},\varepsilon}^{(\mathcal{S}_2)} \mapsto F$, where $(E,M,F,M_s)$ denotes the unique solution of System \eqref{eq:S1}, itself deriving from a standard application of the implicit function theorem combined with variational arguments. 

Moreover, the Fr\'echet-derivative $(\dot E,\dot M,\dot F,\dot M_s)$ of $(E,M,F,M_s)$ at $u$ in the direction $h$ solves the linearized problem
\begin{equation*}
\left\{\begin{array}{l}
\ds \frac{d }{d t}\begin{pmatrix} \dot E \\ \dot M \\ \dot F \\ \dot M_s\end{pmatrix} = A\begin{pmatrix} \dot E \\ \dot M \\ \dot F \\ \dot M_s\end{pmatrix}+\begin{pmatrix} 0 \\ 0 \\ 0 \\ h\end{pmatrix},\\
\dot E(0)=0,~\dot M(0)=0,~\dot F(0)=0,~\dot M_s(0)=0,
\end{array}\right.
\end{equation*}
where the Jacobian matrix $A$ reads 
$$
A=\begin{pmatrix} 
\scriptstyle\frac{\partial f_E}{\partial E}(E,F)&\scriptstyle0&\scriptstyle\frac{\partial f_E}{\partial F}(E,F)&\scriptstyle0\\
\scriptstyle\frac{\partial f_M}{\partial E}(E,M)&\scriptstyle\frac{\partial f_M}{\partial M}(E,M)&\scriptstyle0&\scriptstyle0\\
\scriptstyle\frac{\partial f_F}{\partial E}(E,M,F,M_s)&\scriptstyle\frac{\partial f_F}{\partial M}(E,M,F,M_s)&\scriptstyle\frac{\partial f_F}{\partial F}(E,M,F,M_s)&\scriptstyle\frac{\partial f_F}{\partial M_s}(E,M,F,M_s)\\
\scriptstyle0&\scriptstyle0&\scriptstyle0 &\scriptstyle -\delta_s\end{pmatrix}.
$$
By integration by parts, it follows that
\begin{eqnarray*}
\int_0^T \left\langle \begin{pmatrix} 0\\ 0\\ 0\\ h\end{pmatrix},\begin{pmatrix} P\\Q\\R\\S\end{pmatrix}\right\rangle&=&
\int_0^T \left\langle \frac{d}{dt}\begin{pmatrix} \dot E \\ \dot M \\ \dot F \\ \dot M_s\end{pmatrix},\begin{pmatrix} P\\Q\\R\\S\end{pmatrix}\right\rangle 
- \int_0^T \left\langle A\begin{pmatrix} \dot E \\ \dot M \\ \dot F \\ \dot M_s\end{pmatrix},\begin{pmatrix}P\\Q\\R\\S\end{pmatrix}\right\rangle\\
&=& -\int_0^T \left\langle \begin{pmatrix} \dot E \\ \dot M \\ \dot F \\ \dot M_s\end{pmatrix},\frac{d}{dt}\begin{pmatrix} P\\Q\\R\\S\end{pmatrix}\right\rangle + \left[\left\langle \begin{pmatrix} \dot E \\ \dot M \\ \dot F \\ \dot M_s\end{pmatrix},\begin{pmatrix}P\\Q\\R\\S\end{pmatrix}\right\rangle\right]_0^T\\
&&- \int_0^T \left\langle \begin{pmatrix} \dot E \\ \dot M \\ \dot F \\ \dot M_s\end{pmatrix},A^T\begin{pmatrix}P\\Q\\R\\S\end{pmatrix}\right\rangle= F(T),
\end{eqnarray*}
which leads to the desired result.
\end{proof}
{\begin{remark}
This result, as well as the next one, can also be obtained by using the so-called Pontryagin Maximum Principle (see for instance \cite{Lee-Markus}). In particular, the final condition for the adjoint state is called {\it transversality condition}.
\end{remark}}
The following Lemma leads to a characterization of any optimal control. We chose here to provide some quick explanations on the derivation of optimality conditions. It would have also been possible to use a shorter argument through the so-called Pontryagin Maximum Principle (PMP) and we would have obtained the same result.
\begin{lemma}\label{lemma:cond_opt 4eq}
Let $\bar U> U^*$ and $T\geqslant \bar T(\bar U)$.
Let  $u^*$ denote a solution to Problem~\eqref{eq:opt1}.
There exists $\lambda> 0$ such that
\begin{equation*}
\left\{\begin{array}{l}
\text{a.e. on }\{u^*=0\}\text{, one has } 1+\lambda S(t)\geqslant 0,\\
\text{a.e. on }\{0<u^*<\overline{U}\}\text{, one has } 1+\lambda S(t)=0,\\
\text{a.e. on }\{u^*=\overline{U}\}\text{, one has } 1+\lambda S(t)\leqslant 0.
\end{array}\right.
\end{equation*}
\end{lemma}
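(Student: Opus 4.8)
The plan is to obtain these relations as first-order (Karush--Kuhn--Tucker type) optimality conditions by perturbing the optimal control $u^*$ inside the admissible set and using the differentiability of the endpoint map $G:u\mapsto F(T)$ established in Lemma~\ref{lemma:gat 4eq}. I would introduce the box-constraint set $\mathcal{V}:=\{u\in L^\infty(0,T):0\leqslant u\leqslant\overline{U}\text{ a.e.}\}$, so that $\mathcal{U}_{T,\overline{U},\varepsilon}^{(\mathcal{S}_2)}=\{u\in\mathcal{V}:G(u)\leqslant\varepsilon\}$, and recall from Lemma~\ref{lemma:satur 4 eq} that $G(u^*)=\varepsilon$, i.e.\ the terminal constraint is active at $u^*$. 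The tangent cone to $\mathcal{V}$ at $u^*$ is the convex cone $\mathcal{T}$ of functions $h\in L^\infty(0,T)$ with $h\geqslant 0$ a.e.\ on $\{u^*=0\}$ and $h\leqslant 0$ a.e.\ on $\{u^*=\overline{U}\}$; for such $h$ one finds $h_\eta\to h$ in $L^\infty$ with $u^*+\eta h_\eta\in\mathcal{V}$ for all small $\eta>0$.

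First I would prove the feasible-direction inequality: there is no $h\in\mathcal{T}$ with simultaneously $DG(u^*)\cdot h=\int_0^T hS<0$ and $DJ(u^*)\cdot h=\int_0^T h<0$. Indeed, for such an $h$, since $G$ is Fr\'echet-differentiable and $G(u^*)=\varepsilon$, the perturbed control $u^*+\eta h_\eta$ would satisfy $G(u^*+\eta h_\eta)<\varepsilon$ for $\eta>0$ small, hence belong to $\mathcal{U}_{T,\overline{U},\varepsilon}^{(\mathcal{S}_2)}$, while $J(u^*+\eta h_\eta)<J(u^*)$, contradicting optimality. Consequently the convex cone $\{(\int_0^T h,\int_0^T hS):h\in\mathcal{T}\}\subset\RR^2$ avoids the open third quadrant, and a separation argument in $\RR^2$ yields $(\lambda^0,\lambda)\in\RR_+^2\setminus\{(0,0)\}$ such that
\begin{equation*}
\lambda^0\int_0^T h(t)\,dt+\lambda\int_0^T h(t)S(t)\,dt\geqslant 0\qquad\text{for all }h\in\mathcal{T}.
\end{equation*}

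Granting $\lambda^0>0$, I would rescale so that $\lambda^0=1$ and obtain the three pointwise relations by localisation. Testing the displayed inequality with $h=\pm\mathds{1}_A$ for measurable $A\subset\{0<u^*<\overline{U}\}$ (legitimate since $\mathcal{T}$ imposes no sign there) forces $1+\lambda S=0$ a.e.\ on $\{0<u^*<\overline{U}\}$; testing with $h=\mathds{1}_A$, $A\subset\{u^*=0\}$, gives $1+\lambda S\geqslant 0$ a.e.\ there; testing with $h=-\mathds{1}_A$, $A\subset\{u^*=\overline{U}\}$, gives $1+\lambda S\leqslant 0$ a.e.\ there. Finally, $\lambda>0$: if $\lambda=0$ the first relation reads $1=0$ and the third $1\leqslant 0$, forcing $\{0<u^*<\overline{U}\}$ and $\{u^*=\overline{U}\}$ to be Lebesgue-negligible, i.e.\ $u^*=0$ a.e.; but then $F\equiv\bar F$ by Propositions~\ref{prop:4 eq steady}--\ref{prop:2 eq steady}, contradicting $F(T)=\varepsilon<\bar F$.

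The hard part will be excluding the abnormal multiplier, i.e.\ proving $\lambda^0\neq0$ (the case $\lambda^0=0$ would destroy the conclusion, since localisation then only yields $S=0$ on $\{0<u^*<\overline{U}\}$ in place of $1+\lambda S=0$). The idea I would use: if $\lambda^0=0$, then $\lambda\int_0^T hS\geqslant 0$ on $\mathcal{T}$ forces $S\equiv 0$ on $\{0<u^*<\overline{U}\}$; whenever this set has positive measure, $\dot S=0$ there, and reading off \eqref{eq:dual4} together with the strict signs $\partial f_F/\partial M_s<0$, $\partial f_E/\partial F>0$, $\partial f_M/\partial E>0$ (valid along the trajectory by Proposition~\ref{prop:4 eq steady}) one gets successively $R\equiv 0$, $P\equiv 0$, $Q\equiv 0$ on that interval, hence $(P,Q,R,S)\equiv 0$ everywhere by uniqueness for the linear system \eqref{eq:dual4}, contradicting $R(T)=1$. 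The purely bang-bang case $|\{0<u^*<\overline{U}\}|=0$ requires a dedicated argument (exploiting that $S(T)=0$ and $\dot S(T)=-\partial f_F/\partial M_s|_T>0$ force $S<0$ near $T$), and is in any case covered by invoking the Pontryagin Maximum Principle as the paper indicates: the Hamiltonian of \eqref{eq:opt1} is affine in $u$ with switching function proportional to $1+\lambda S$, the transversality condition is precisely the terminal datum $R(T)=1$, and the non-vanishing of the adjoint state delivers normality, yielding the same conclusion.
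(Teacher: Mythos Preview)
Your approach is essentially the same as the paper's: both obtain the three relations as first-order (KKT-type) conditions by perturbing $u^*$ inside the box $\mathcal{V}$, using that $DJ(u)\cdot h=\int h$ and $DG(u)\cdot h=\int hS$ from Lemma~\ref{lemma:gat 4eq}, and both conclude $\lambda>0$ by the same contradiction ($\lambda=0\Rightarrow u^*\equiv0\Rightarrow F(T)=\bar F>\varepsilon$). The paper's localisation via Lebesgue density points and shrinking sets $H_n$ is equivalent to your direct testing with indicators $\pm\mathds{1}_A$.

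The one substantive difference is the treatment of normality. The paper writes the Lagrangian with cost multiplier already set to $1$ and dismisses the issue with ``by standard arguments, we get the existence of a Lagrange multiplier $\lambda\geqslant0$''. You are more careful: you carry the pair $(\lambda^0,\lambda)$ from a separation argument in $\RR^2$ and then work to exclude $\lambda^0=0$. Your cascade argument in the case $|\{0<u^*<\overline{U}\}|>0$ is correct (using that $S,R,P,Q$ are $C^1$, that a $C^1$ function vanishing on a set of positive measure has vanishing derivative a.e.\ there, and the strict signs of $\partial f_F/\partial M_s$, $\partial f_E/\partial F$, $\partial f_M/\partial E$ along the trajectory), and you rightly flag that the purely bang-bang case needs a separate treatment, ultimately appealing to the PMP as the paper's remark does. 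An alternative route you could mention, closer in spirit to ``standard arguments'', is a constraint qualification: since there exist $u\in\mathcal{V}$ with $G(u)<\varepsilon$ (e.g.\ by Lemma~\ref{monotonu} applied to $u=\bar U$ on a slightly longer interval than in Lemma~\ref{lemma:4 eq exist}), a Slater-type condition holds and normality follows directly. Either way, your version is correct and somewhat more explicit than the paper's on this point.
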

\begin{proof}

Let us introduce the Lagrangian function $\mathcal{L}$ associated to problem \eqref{eq:opt1}, defined by 
{\begin{equation*}
\mathcal{L}:(u,\lambda)\in\mathcal{U}_{T,\overline{U}}\times \mathbb{R}\mapsto J(u)-\lambda(F(T)-\varepsilon),
\end{equation*}}
where $\mathcal{U}_{T,\overline{U}}:=\{u\in L^{\infty}(0,T):0\leqslant u(\cdot)\leqslant \overline{U}\}$.

By standard  arguments, we get  the  existence  of  a  Lagrange  multiplier  $\lambda\geqslant 0$ such that $(u^*,\lambda)$ satisfies $D_u\mathcal{L}(u^*,\lambda)\cdot h\geq 0$ for every $h$ belonging to the tangent cone of the set $\mathcal{U}_{T,\overline{U}}^{(\mathcal{S}_2)}$ at $u^*$.
Moreover, according to Lemma \ref{lemma:satur 4 eq}, we have necessarily $F(T)=\varepsilon$. 

Let $t^*$ be a Lebesgue density-one point of $\{u^*=0\}$.  Let $(H_n)_{n\in\mathbb{N}}$ be a sequence of measurable  subsets containing all $t^*$ and such that $H_n$ is included  in  $\{u^*=0\}$.  
 Let  us  consider $h=\mathds{1}_{H_n}$ and  notice  that, by construction, $u^*+\eta h$ belongs  to $\mathcal{U}_{T,\overline{U}}^{(\mathcal{S}_2)}$ whenever $\eta$ is  small  enough.
One has
 $$\mathcal{L}(u^*+\eta h,\lambda)\geqslant \mathcal{L}(u^*,\lambda),$$
 whenever $\eta$ is  small  enough. Let us divide this inequality by $\eta$, 
 and let $\eta$ go to 0. By using Lemma~\ref{lemma:gat 4eq}, we obtain
$$\int_0^Th(t)dt+ \lambda \int_0^Th(t)S(t)dt\geqslant 0,$$
which rewrites $|H_n|+ \lambda \int_{H_n}S(t)dt\geqslant 0.$ Dividing  this  inequality  by $|H_n|$ and  letting
$H_n$ shrink  to $\{t^*\}$ as $n\rightarrow\infty$ shows that $1+\lambda S(t)\geqslant 0$ on $\{u^*=0\}$. This proves the  first  point  of  Lemma~\ref{lemma:cond_opt 4eq},  according  to  the  Lebesgue Density Theorem {(see e.g. \cite{mattila1999geometry})}.  The proof of the third point is similar and consists in considering perturbations of the form $u^*-\eta h$, where $h$ denotes a positive admissible perturbation of $u^*$ supported in $\{u^* = \overline{U}\}$.  
Finally, the proof of the second point follows the same lines by considering bilateral perturbations of the form $u^*\pm\eta h$, where $h$ denotes an admissible perturbation of $u^*$  supported in $\{0< u^*< \overline{U}\}$.

Let us now prove that $\lambda>0$. We argue by contradiction, assuming that $\lambda=0$. Then, the switching function $1+\lambda S$ is necessarily constant, equal to 1, and we have therefore $u^*=0$ in $[0,T]$, which leads to a contradiction since the optimal trajectory has to satisfy $F(T)=\varepsilon$.
\end{proof}


Let us prove the remaining facts stated in Theorem~\ref{theo:opt 1}.
\begin{proof}[Proof of Theorem~\ref{theo:opt 1}]
Let $\varepsilon\in(0,\bar F)$. According to Lemma~\ref{lemma:4 eq exist}, there exists $U^*$ such that for each $\bar{U}>U^*$, there exists $\bar T$ such for all $T>\bar T$, Problem~\eqref{eq:opt1} has a solution $u^*$. Let {$(E,M,F,M_s)$} be the optimal trajectory.
According to Lemma~\ref{lemma:satur 4 eq}, the constraint ``$F(T)=\varepsilon$'' is reached.
Lemma~\ref{lemma:cond_opt 4eq} implies that on $\{S>-1/\lambda\}$, one has necessarily $u=0$. Since $S$ is continuous and $S(T)=0$, it follows that there exists $t_1\in(0,T)$ such that $u^*=0$ on $(t_1,T)$.
\end{proof}

\subsection{Proof of Theorem \ref{theo:opt 2}}
Let us first point out that System~\eqref{eq:primal2} is monotone with respect to the control $u$~:
\begin{lemma}\label{monotonu2}
  Let $u_1, u_2\in L^{\infty}(0,T)$ such that  $u_1\geqslant u_2\geqslant 0$ (resp. $u_1>u_2\geqslant 0$). Let us assume that \eqref{init} holds.
  Then, the corresponding solutions $F_1$, $F_2$ to System~\eqref{eq:primal2} satisfy $F_1\leqslant F_2$ on $(0,T)$ (resp. $F_1< F_2$ on $(0,T)$).
\end{lemma}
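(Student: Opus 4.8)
The plan is to exploit the fact that in System~\eqref{eq:primal2} the equation on $M_s$ does not involve $F$, so that the statement reduces to a comparison principle for a single non-autonomous scalar ODE. First I would solve the $M_s$-equation by Duhamel's formula: for $i=1,2$,
$$
M_{s_i}(t)=\int_0^t u_i(s)\,e^{\delta_s(s-t)}\,ds ,
$$
so that $u_1\geqslant u_2\geqslant 0$ gives $M_{s1}(t)\geqslant M_{s2}(t)$ for every $t\geqslant 0$, while $u_1>u_2\geqslant 0$ a.e.\ gives $M_{s1}(t)>M_{s2}(t)$ for every $t>0$.

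Then I would record two facts valid on the region where the trajectories live. By Proposition~\ref{prop:2 eq steady} and the choice \eqref{init}, both $F_1$ and $F_2$ stay in $(0,\bar F]$ with $M_{s_i}\geqslant 0$, and in fact $F_i(t)\geqslant \bar F e^{-\delta_F t}$, so $F_i$ is bounded away from $0$ on $[0,T]$. On the compact set $\{(F,M_s):\bar F e^{-\delta_F T}\leqslant F\leqslant \bar F,\ 0\leqslant M_s\leqslant \|M_{s1}\|_{L^\infty}\}$ the function $f$ of \eqref{eq:f1} is smooth, hence Lipschitz in $F$; moreover, for $F>0$, increasing $M_s$ strictly increases the positive quantity $\delta_M\gamma_s M_s\big(\tfrac{\beta_E F}{K}+\nu_E+\delta_E\big)$ appearing in the denominator of \eqref{eq:f1}, hence $\partial f/\partial M_s<0$ there.

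Next I would set $g:=F_2-F_1$ and, writing $f(F_2,M_{s2})-f(F_1,M_{s1})=\big[f(F_2,M_{s2})-f(F_1,M_{s2})\big]+\big[f(F_1,M_{s2})-f(F_1,M_{s1})\big]$ and applying the mean value theorem to each bracket, obtain $g'(t)=a(t)g(t)+b(t)$ with
$$
a(t)=\int_0^1 \frac{\partial f}{\partial F}\big((1-\theta)F_1(t)+\theta F_2(t),\,M_{s2}(t)\big)\,d\theta ,\qquad b(t)=\int_{M_{s1}(t)}^{M_{s2}(t)}\frac{\partial f}{\partial M_s}\big(F_1(t),\sigma\big)\,d\sigma .
$$
Since the convex combination $(1-\theta)F_1+\theta F_2$ stays in the compact set above, $a\in L^\infty(0,T)$; and since $M_{s2}\leqslant M_{s1}$ with $\partial f/\partial M_s<0$ there, we get $b\geqslant 0$ on $(0,T)$, with $b(t)>0$ for $t>0$ in the strict case. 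As $g(0)=0$, the variation-of-constants formula gives
$$
g(t)=\int_0^t \exp\!\Big(\int_s^t a(\tau)\,d\tau\Big)\,b(s)\,ds\geqslant 0\quad\text{on }(0,T),
$$
and the right-hand side is $>0$ on $(0,T)$ whenever $b>0$ a.e.\ there; this is precisely $F_1\leqslant F_2$ (resp.\ $F_1<F_2$).

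I do not expect a genuine obstacle. The only delicate points are the well-posedness and strict positivity of $F$, which are supplied by Proposition~\ref{prop:2 eq steady}, and, for the strict inequality, the fact that a bare maximum-principle argument at a first contact point is inconclusive (one has $g'(0)=0$), which is why the explicit Duhamel representation of $g$ above is used instead. Alternatively, one could argue exactly as in the proof of Lemma~\ref{monotonu} by verifying the Kamke--M\"uller quasimonotonicity conditions for the planar system $(F,M_s)$ --- here reducing to $\partial f/\partial M_s\leqslant 0$ --- but the scalar reduction seems shorter.
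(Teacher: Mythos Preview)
Your proof is correct and rests on the same key observation as the paper's, namely that $\partial f/\partial M_s\leqslant 0$ on $(0,\bar F]\times\RR_+$; the paper's proof is a one-line reference to this sign condition (in the spirit of Lemma~\ref{monotonu}), whereas you unpack it explicitly via the scalar reduction and variation of constants. Your treatment has the advantage of making the strict-inequality case transparent, which the paper's terse argument leaves implicit.
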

\begin{proof}
This is an immediate consequence of the fact that $\frac{\pa f}{\pa M_s} \leqslant 0$ when $F\in (0,\bar{F}]$.
\end{proof}

Let us investigate the existence of an optimal control for Problem~\eqref{eq:opt2}.
\begin{lemma}\label{lemma:2 eq exist}
    Let $\varepsilon\in (0,\bar{F})$. For every $\overline{U}>U^*$, there exists $\bar{T}(\bar U)>0$ such that for all $T\geqslant \bar T(\bar U)$, the set ${\mathcal U}_{T,\bar{U},\varepsilon}^{(\mathcal{S}_1)}$ is {nonempty} and Problem~\eqref{eq:opt2} has a solution $u^*$.

Moreover, one has $J_{T,\bar{U},\varepsilon}^{(\mathcal{S}_1)} \leqslant \bar{U}\,\bar{T}$ (where $J_{T,\overline{U},\varepsilon}^{(\mathcal{S}_1)}$ is defined by \eqref{optval}) and $J_{T,\bar{U},\varepsilon}^{(\mathcal{S}_1)}$ is non-increasing with respect to $T\geqslant \bar{T}$ and to $\bar{U}>U^*$.
\end{lemma}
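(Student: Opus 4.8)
The plan is to mirror the proof of Lemma~\ref{lemma:4 eq exist} almost verbatim, since System~\eqref{eq:primal2} is just the reduced version of System~\eqref{eq:S1} and enjoys the same qualitative features. First I would establish that ${\mathcal U}_{T,\bar U,\varepsilon}^{(\mathcal{S}_1)}$ is nonempty for $T$ large: by the last assertion of Proposition~\ref{prop:2 eq steady}, the constant control $u\equiv\bar U>U^*$ drives $F(t)\to 0$ as $t\to+\infty$, so by continuity of $F$ there is $\bar T(\bar U)>0$ with $F(\bar T)\leqslant\varepsilon$; then for any $T\geqslant\bar T$ the control $\bar u=\bar U\,\mathds{1}_{[T-\bar T,T]}$ is admissible, because on $[0,T-\bar T]$ the system stays at the stationary state $\bar F$ (recall $F(0)=\bar F$), and after time $T-\bar T$ it reproduces the trajectory above. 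This simultaneously gives the bound $J_{T,\bar U,\varepsilon}^{(\mathcal{S}_1)}\leqslant J(\bar u)=\bar U\,\bar T$.

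Next I would prove existence of a minimizer by the direct method. Take a minimizing sequence $(u_n)$ in ${\mathcal U}_{T,\bar U,\varepsilon}^{(\mathcal{S}_1)}$; by Banach--Alaoglu it has a subsequence converging weakly-$*$ in $L^\infty(0,T)$ to some $u\in L^\infty(0,T,[0,\bar U])$, and $J$ is weak-$*$ continuous (it is linear and continuous). It remains to pass to the limit in the constraint $F^{u_n}(T)\leqslant\varepsilon$. Here $M_s^{u_n}(t)=\int_0^t e^{-\delta_s(t-s)}u_n(s)\,ds$ converges pointwise (indeed uniformly, being bounded in $W^{1,\infty}(0,T)$, so by Ascoli up to a subsequence in $C^0([0,T])$) to $M_s^u$; and, using Proposition~\ref{prop:2 eq steady}, $F^{u_n}$ stays in $(0,\bar F]$, hence the right-hand side $f(F^{u_n},M_s^{u_n})$ of the $F$-equation is bounded, so $(F^{u_n})$ is bounded in $W^{1,\infty}(0,T)$ and converges, up to a subsequence and again by Ascoli, in $C^0([0,T])$ to $F^u$. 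Passing to the limit in $F^{u_n}(T)\leqslant\varepsilon$ gives $F^u(T)\leqslant\varepsilon$, so $u\in{\mathcal U}_{T,\bar U,\varepsilon}^{(\mathcal{S}_1)}$ and $u$ is optimal.

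Finally I would establish monotonicity of $T\mapsto J_{T,\bar U,\varepsilon}^{(\mathcal{S}_1)}$ and $\bar U\mapsto J_{T,\bar U,\varepsilon}^{(\mathcal{S}_1)}$ from the monotonicity of the admissible sets for inclusion. For the $\bar U$-dependence: if $\bar U_1\leqslant\bar U_2$ then trivially any control bounded by $\bar U_1$ is bounded by $\bar U_2$, and by Lemma~\ref{monotonu2} (more releases decrease $F$) the constraint $F(T)\leqslant\varepsilon$ is preserved, so ${\mathcal U}_{T,\bar U_1,\varepsilon}^{(\mathcal{S}_1)}\subset{\mathcal U}_{T,\bar U_2,\varepsilon}^{(\mathcal{S}_1)}$, whence $J_{T,\bar U_2,\varepsilon}^{(\mathcal{S}_1)}\leqslant J_{T,\bar U_1,\varepsilon}^{(\mathcal{S}_1)}$. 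For the $T$-dependence: if $T_1\leqslant T_2$ and $u_1^*$ is optimal for $T_1$, then $u_2=u_1^*(\cdot-T_2+T_1)\mathds{1}_{(T_2-T_1,T_2)}$ lies in ${\mathcal U}_{T_2,\bar U,\varepsilon}^{(\mathcal{S}_1)}$ (again because the trajectory is stationary at $\bar F$ on $[0,T_2-T_1)$, since $F(0)=\bar F$), and $J(u_2)=J(u_1^*)=J_{T_1,\bar U,\varepsilon}^{(\mathcal{S}_1)}$, so $J_{T_2,\bar U,\varepsilon}^{(\mathcal{S}_1)}\leqslant J_{T_1,\bar U,\varepsilon}^{(\mathcal{S}_1)}$. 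The only mild subtlety, and the point to state carefully, is that translating a control and padding it with zeros does keep the trajectory at the persistence equilibrium on the added interval; this is exactly where we use that $u\equiv0$ fixes $\bar F$ together with the initial condition~\eqref{init}. No step is genuinely hard here — it is a direct transcription of the $\mathcal{S}_2$ argument — so I expect no real obstacle beyond keeping the compactness bookkeeping clean.
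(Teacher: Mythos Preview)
Your proposal is correct and follows exactly the same approach as the paper: the paper's proof explicitly says to reproduce the argument of Lemma~\ref{lemma:4 eq exist}, using Proposition~\ref{prop:2 eq steady} for nonemptiness via the shifted control $\bar U\,\mathds{1}_{[T-\bar T,T]}$, the direct method with Banach--Alaoglu and Ascoli for existence, and the inclusion of admissible sets (invoking the monotonicity Lemma~\ref{monotonu2} and the time-shift trick) for the monotonicity claims. Your write-up is in fact more detailed than the paper's own proof, which largely defers to Lemma~\ref{lemma:4 eq exist}.
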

\begin{proof}
  We first prove that ${\mathcal U}_{T,\bar{U},\varepsilon}^{(\mathcal{S}_1)}$ is nonempty when $\bar{U}>U^*$ and $T$ is large enough. If $u(\cdot)=\bar{U}>U^*$, we have seen in Proposition~\ref{prop:2 eq steady} that $F(t)\to 0$ as $t\to +\infty$. Thus, for any $\varepsilon\in(0,\bar{F})$, there exists $\bar{T}>0$, such that $F(\bar{T})=\varepsilon$.
  Hence, for $T\geqslant \bar{T}$, $u=\bar{U} \mathds{1}_{[T-\bar{T},T]}$ belongs to $\mathcal{U}^{(\mathcal{S}_1)}_{T,\bar{U},\varepsilon}$.

Proceeding as in the proof of Lemma~\ref{lemma:4 eq exist}, we obtain existence of a solution to Problem~\eqref{eq:opt2} by considering a minimizing sequence and showing that it is in fact compact.
  Finally, the monotonicity and the bound on $J_{T,\bar{U},\varepsilon}^{(\mathcal{S}_1)}$ are obtained exactly as in the end of the proof of Lemma~\ref{lemma:4 eq exist}.
\end{proof}

By mimicking the proof of Lemma~\ref{lemma:satur 4 eq} for Problem~\eqref{eq:opt1}, we can prove that the constraint $F(T)\leqslant \varepsilon$ is saturated.

\begin{lemma}\label{lemma:satur}
Let $\bar U> U^*$, $T\geqslant \bar T(\bar U)$ and $u^*$ be a solution to Problem~\eqref{eq:opt2}. Let $(F,M_s)$ be the associated optimal trajectory, solution to System~\eqref{eq:primal2}. 
Then, one has $F(T)=\varepsilon$  and $F(\cdot)\in(\eps,\bar F]$ on $[0,T)$. 
\end{lemma}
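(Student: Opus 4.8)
The statement for Problem~\eqref{eq:opt2} (System~\eqref{eq:primal2}) is the exact analogue of Lemma~\ref{lemma:satur 4 eq}, so the plan is to reproduce that argument line by line, replacing the four-dimensional trajectory $(E,M,F,M_s)$ by the two-dimensional one $(F,M_s)$, System~\eqref{eq:S1} by System~\eqref{eq:primal2}, and the monotonicity Lemma~\ref{monotonu} by Lemma~\ref{monotonu2}. The proof splits into three assertions: the upper bound $F\leqslant \bar F$ on $(0,T)$, the lower bound $F(T)\geqslant\varepsilon$, and the strict inequality $F(\cdot)>\varepsilon$ on $[0,T)$.

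\textbf{Step 1: the upper bound.} Since $\bar F$ is a stationary solution of \eqref{eq:primal2} with $M_s=0$, and since $F(0)=\bar F$, $M_s(0)=0$ with $u^*\geqslant 0$, the stability property in the second point of Proposition~\ref{prop:2 eq steady} gives directly $F(t)\in(0,\bar F]$ for all $t\in[0,T]$; in particular $F\leqslant\bar F$ on $(0,T)$.

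\textbf{Step 2: saturation of the terminal constraint.} I would argue by contradiction: suppose $F(T)<\varepsilon$. Set $u_\eta:=(1-\eta)u^*$ for $\eta\in(0,1)$ and let $F_\eta$ be the corresponding solution of \eqref{eq:primal2}. As in the proof of Lemma~\ref{lemma:4 eq exist}, the map $u\in\mathcal{U}_{T,\bar U,\varepsilon}^{(\mathcal{S}_1)}\mapsto (F,M_s)\in C^0([0,T])^2$ is continuous for the weak-$*$ topology of $L^\infty(0,T)$ (this is where one uses that $M_s^{u}(t)=\int_0^te^{-\delta_s(t-s)}u(s)\,ds$ depends continuously on $u$ and then feeds this into the Cauchy problem for $F$ via the Ascoli theorem), so $F_\eta(T)=F(T)+\operatorname{O}(\eta)$ and hence $F_\eta(T)<\varepsilon$ for $\eta$ small. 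But $\int_0^Tu_\eta<\int_0^Tu^*$, contradicting the minimality of $u^*$. Therefore $F(T)\geqslant\varepsilon$, and combined with the upper bound $F(T)\leqslant\bar F$ (Step~1) we get $F(T)\in[\varepsilon,\bar F]$; in fact, the converse inequality $F(T)\leqslant\varepsilon$ comes for free because $u^*$ is admissible, so $F(T)=\varepsilon$.

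\textbf{Step 3: strict positivity of $F-\varepsilon$ on $[0,T)$.} Again by contradiction: if $F(T')=\varepsilon$ for some $T'<T$, define $u_{T'}:=u^*(\cdot-T+T')\mathds{1}_{[T-T',T]}$. Since $(F,M_s)$ is stationary on $[0,T-T')$ (the state starts at the equilibrium and $u_{T'}=0$ there), the trajectory driven by $u_{T'}$ reaches the value $\varepsilon$ at time $T$, so $u_{T'}\in\mathcal{U}_{T,\bar U,\varepsilon}^{(\mathcal{S}_1)}$, while $J(u_{T'})=\int_0^{T'}u^*<\int_0^{T}u^*=J(u^*)$, contradicting optimality. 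Hence $F>\varepsilon$ on $[0,T)$, and with Step~1 this yields $F(\cdot)\in(\varepsilon,\bar F]$ on $[0,T)$. The only mildly delicate point is the weak-$*$ continuity of the control-to-state map invoked in Step~2, but this is entirely routine here since $M_s$ depends on $u$ through an explicit convolution and the $F$-equation is a scalar ODE with right-hand side locally Lipschitz in $(F,M_s)$ on the invariant region $(0,\bar F]\times\RR_+$; everything else is a verbatim transcription of Lemma~\ref{lemma:satur 4 eq}.
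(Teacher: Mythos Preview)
Your proposal is correct and follows exactly the approach the paper intends: the paper's own proof of this lemma consists of the single sentence ``By mimicking the proof of Lemma~\ref{lemma:satur 4 eq} for Problem~\eqref{eq:opt1}, we can prove that the constraint $F(T)\leqslant \varepsilon$ is saturated,'' and you have carried out precisely that transcription, with the correct substitutions (Proposition~\ref{prop:2 eq steady} for Proposition~\ref{prop:4 eq steady}, the two-dimensional state for the four-dimensional one). The three steps you isolate---upper bound via invariance, saturation via the perturbation $u_\eta=(1-\eta)u^*$ and weak-$*$ continuity of the control-to-state map, and strict positivity via the time-shifted control $u_{T'}$---match the structure of Lemma~\ref{lemma:satur 4 eq} exactly.
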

We can also prove that $F$ is non-increasing on $(0,T)$. 

%


\begin{lemma}\label{lem:f1neg}
Let $\bar U> U^*$, $T\geqslant \bar T(\bar U)$ and $u^*$ be a solution to Problem~\eqref{eq:opt2}. Let $(F,M_s)$ be the associated optimal trajectory, solution to System~\eqref{eq:primal2}.
Then, for every $t\in (0,T)$, one has $F'(t) \leqslant 0$.
\end{lemma}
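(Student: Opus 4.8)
The plan is to argue by contradiction, exploiting that an optimal release strategy cannot ``waste'' control. Assume that $F$ is not non-increasing on $[0,T]$, i.e.\ that there exist $0\leqslant a<b\leqslant T$ with $F(a)<F(b)$. Since $F\leqslant\bar F$ on $[0,T]$ and, by Lemma~\ref{lemma:satur}, $F>\varepsilon$ on $[0,T)$ and $F(T)=\varepsilon$, any level $v\in(F(a),F(b))$ satisfies $\varepsilon<v<\bar F$. Using $F(0)=\bar F>v$, $F(T)=\varepsilon<v$ and the intermediate value theorem, I would pick two crossing times $\tau_1:=\sup\{t\leqslant a:F(t)=v\}$ and $\tau_2:=\inf\{t\geqslant a:F(t)=v\}$; then $0<\tau_1<\tau_2<T$, $F(\tau_1)=F(\tau_2)=v$ and $F<v$ on $(\tau_1,\tau_2)$. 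As $F\in C^1$ (because $\dot F=f(F,M_s)$ with $F,M_s$ continuous), this forces $F'(\tau_1)\leqslant0\leqslant F'(\tau_2)$, i.e.\ $f(v,M_s(\tau_1))\leqslant0\leqslant f(v,M_s(\tau_2))$; since $M_s\mapsto f(v,M_s)$ is strictly decreasing (one has $\frac{\partial f}{\partial M_s}<0$ for $v>0$, by \eqref{eq:f1}), we deduce $M_s(\tau_2)\leqslant M_s(\tau_1)$.

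Next I would build the ``time-cut'' competitor. Set $\sigma:=\tau_2-\tau_1\in(0,T)$ and define $\tilde u\in L^\infty(0,T-\sigma;[0,\bar U])$ by $\tilde u=u^*$ on $[0,\tau_1]$ and $\tilde u(t)=u^*(t+\sigma)$ on $[\tau_1,T-\sigma]$, with associated trajectory $(\tilde F,\tilde M_s)$ of System~\eqref{eq:primal2}. On $[0,\tau_1]$ one has $\tilde F=F$, hence $\tilde F(\tau_1)=v=F(\tau_1+\sigma)$. On $[\tau_1,T-\sigma]$, since $\tilde M_s$ and $t\mapsto M_s(t+\sigma)$ solve the same linear equation $\dot y=\tilde u-\delta_s y$ with $\tilde M_s(\tau_1)=M_s(\tau_1)\geqslant M_s(\tau_2)=M_s(\tau_1+\sigma)$, we get $\tilde M_s(t)\geqslant M_s(t+\sigma)$; combined with $\frac{\partial f}{\partial M_s}<0$ and the equality of initial values, a standard ODE comparison for $\dot F=f(F,\cdot)$ gives $\tilde F(t)\leqslant F(t+\sigma)$ for all $t\in[\tau_1,T-\sigma]$, the trajectories staying in $\{F\in(0,\bar F]\}$ where $f$ is smooth (indeed $\dot{\tilde F}\leqslant f(\tilde F,0)$ keeps $\tilde F$ below $\bar F$ and $\dot{\tilde F}\geqslant-\delta_F\tilde F$ keeps it positive). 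In particular $\tilde F(T-\sigma)\leqslant F(T)=\varepsilon$, so $\tilde u\in\mathcal{U}^{(\mathcal{S}_1)}_{T-\sigma,\bar U,\varepsilon}$. Using the optimality of $u^*$, the fact that extending any admissible control of horizon $T-\sigma$ by $0$ on an initial time-interval produces an admissible control of horizon $T$ with the same cost (hence $J^{(\mathcal{S}_1)}_{T,\bar U,\varepsilon}\leqslant J^{(\mathcal{S}_1)}_{T-\sigma,\bar U,\varepsilon}$, as in Lemma~\ref{lemma:2 eq exist}), and $J(\tilde u)=J(u^*)-\int_{\tau_1}^{\tau_2}u^*(t)\,dt$, I obtain
\[
J^{(\mathcal{S}_1)}_{T,\bar U,\varepsilon}\ \leqslant\ J^{(\mathcal{S}_1)}_{T-\sigma,\bar U,\varepsilon}\ \leqslant\ J(\tilde u)\ =\ J^{(\mathcal{S}_1)}_{T,\bar U,\varepsilon}-\int_{\tau_1}^{\tau_2}u^*(t)\,dt ,
\]
so that $\int_{\tau_1}^{\tau_2}u^*(t)\,dt=0$, i.e.\ $u^*=0$ a.e.\ on $[\tau_1,\tau_2]$, and moreover all the above inequalities are equalities; in particular $\tilde u$ is optimal for the horizon $T-\sigma$.

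To close the contradiction I would first re-run the saturation argument of Lemma~\ref{lemma:satur} for the control $\tilde u$ (which is optimal for horizon $T-\sigma$): if $\tilde F(T-\sigma)<\varepsilon$, then $(1-\eta)\tilde u$ would be admissible for small $\eta>0$ with strictly smaller cost, a contradiction; hence $\tilde F(T-\sigma)=\varepsilon$. On the other hand, if the inequality $M_s(\tau_2)\leqslant M_s(\tau_1)$ were strict, then $\tilde M_s(t)-M_s(t+\sigma)=(M_s(\tau_1)-M_s(\tau_2))e^{-\delta_s(t-\tau_1)}>0$ on $[\tau_1,T-\sigma]$, and since $\frac{\partial f}{\partial M_s}<0$ the comparison of the previous paragraph becomes strict, yielding $\tilde F(T-\sigma)<F(T)=\varepsilon$ — contradiction. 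Therefore $M_s(\tau_1)=M_s(\tau_2)$. But $u^*=0$ on $[\tau_1,\tau_2]$ forces $M_s(\tau_2)=M_s(\tau_1)e^{-\delta_s\sigma}$, whence $M_s(\tau_1)=0$ and $M_s\equiv0$ on $[\tau_1,\tau_2]$. Consequently $\dot F=f(F,0)$ on $[\tau_1,\tau_2]$, and since $f(\cdot,0)>0$ on $(0,\bar F)$ (this is the uncontrolled logistic-type dynamics of Proposition~\ref{prop:2 eq steady}, whose only equilibria on $[0,\bar F]$ are $0$, unstable, and $\bar F$) while $F\in(0,v]\subset(0,\bar F)$ there, $F$ is strictly increasing on $[\tau_1,\tau_2]$, contradicting $F(\tau_1)=F(\tau_2)$. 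This proves $F'\leqslant0$ on $(0,T)$.

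The main obstacle is the chain of ODE comparison estimates of the last two paragraphs: one has to make sure the cut trajectory $(\tilde F,\tilde M_s)$ stays in the region $\{F\in(0,\bar F],\ M_s\geqslant0\}$ where $f$ is smooth with $\frac{\partial f}{\partial M_s}<0$, and — the delicate point — one must upgrade the inequality $\tilde F(T-\sigma)\leqslant\varepsilon$ to a \emph{strict} inequality from a strict ordering of the $M_s$-components, which is exactly what converts the heuristic ``delaying or removing an excursion of $F$ wastes sterile males'' into a rigorous contradiction. Extracting the crossing times $\tau_1<\tau_2$ with the correct one-sided sign of $F'$ requires some care but is routine.
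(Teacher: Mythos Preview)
Your argument is correct and follows the same overall strategy as the paper (contradiction via a time-cut competitor and ODE comparison), but with a genuinely different execution. You locate $\tau_1<\tau_2$ by a clean level-crossing argument, obtaining $F(\tau_1)=F(\tau_2)=v$ and then $M_s(\tau_2)\leqslant M_s(\tau_1)$ from the strict monotonicity of $M_s\mapsto f(v,M_s)$; the paper instead runs a case analysis on $M_s$ over a maximal rising interval $(\theta_1,\theta_2)$ of $F$ (using the explicit threshold $\delta_M\gamma_sM_s\gtrless\phi(F)$) to secure the \emph{strict} inequality $F(\tau_1)<F(\tau_2)$ together with $M_s(\tau_1)\geqslant M_s(\tau_2)$. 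That strictness lets the paper's competitor --- which is exactly your $\tilde u$ prepended by zero, viewed on the full horizon $T$ --- yield $F^u(T)<\varepsilon$ in one stroke, contradicting Lemma~\ref{lemma:satur} immediately. You only obtain $\tilde F(T-\sigma)\leqslant\varepsilon$ and must therefore close with the extra chain ``equalities throughout $\Rightarrow u^*=0$ on $[\tau_1,\tau_2]\Rightarrow M_s(\tau_1)=M_s(\tau_2)\Rightarrow M_s(\tau_1)=0\Rightarrow \dot F=f(F,0)>0$ on $[\tau_1,\tau_2]$'', which is elegant but adds a step. In short: your choice of $\tau_1,\tau_2$ is more natural to set up, the paper's buys a one-line endgame. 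One formal remark: you invoke the monotonicity of the optimal value and the saturation Lemma~\ref{lemma:satur} at horizon $T-\sigma$, possibly below $\bar T(\bar U)$; both arguments go through verbatim here since $\mathcal{U}^{(\mathcal{S}_1)}_{T-\sigma,\bar U,\varepsilon}$ is nonempty (it contains $\tilde u$) and the minimizer is nontrivial ($J(\tilde u)=J(u^*)>0$), so this is only a matter of stating the hypotheses carefully.
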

\begin{proof}
Let us argue by contradiction, assuming that the conclusion is not true. Then, since $F$ is $C^1$, there exist $0<\theta_1<\theta_2<T$ such that $F'>0$ on $(\theta_1,\theta_2)$.

According to Lemma~\ref{monotonu2}, $F$ is non-increasing in a neighborhood of 0. Moreover, according to Lemma~\ref{lemma:satur}, $F$ decreases to $\eps$ in a neighborhood of $T$. Consequently, it is not restrictive to also assume that $F'(\theta_1)=F'(\theta_2)=0$, and $F'\leqslant 0$ on $(0,\theta_1)$ 
 
 Notice from the expression of $f$ given in \eqref{eq:f1} that $F'=f(F,M_s)\leqslant 0$ if, and only if,
  \begin{equation*}
\delta _M \gamma_s M_s\geqslant\phi(F)=(1-\nu)\beta_E \nu_E\frac{ \nu \beta_E \nu_E
- \delta_F \big(\frac{\beta_E F}{K} + \nu_E+\delta_E \big) }
{ \delta_F \big(\frac{\beta_E F}{K} + \nu_E+\delta_E \big)^2 }F.
\end{equation*}
 
  Let us show that there exist $0<\tau_1<\tau_2<T$ such that $F(\tau_1) < F(\tau_2)$ and $M_s(\tau_1)\geqslant M_s(\tau_2)$. Indeed, there are two possibilities. Either there exists $\tau_2\in (\theta_1,\theta_2)$ such that $M_s(\theta_1) = M_s(\tau_2)$, and then we take $\tau_1=\theta_1$, or for any $t\in(\theta_1,\theta_2)$, $M_s(\theta_1)<M_s(t)$. 
  In this latter case, we take $\tau_2\in(\theta_1,\theta_2)$ such that $M_s(\theta_1)<M_s(\tau_2)<\phi(F(\tau_2))$ (which is always possible since $F' > 0$ on $(\theta_1,\theta_2)$). 
  Then, since $F(0)=\bar{F}>F(\tau_2)>F(\theta_1)$ and $F$ is continuous, there exists $\tilde{\tau}\in (0,\theta_1)$ such that $F(\tilde{\tau}) = F(\tau_2)$.
  Moreover, since $M_s\geqslant \phi(F)$ on $(0,\theta_1)$, we have $M_s(\tilde{\tau})\geqslant \phi(F(\tilde{\tau}))=\phi(F(\tau_2)) > M_s(\tau_2)>M_s(\theta_1)$.
  By continuity of $M_s$, there exists $\tau_1 \in (\tilde{\tau},\theta_1)$ such that $M_s(\tau_1)=M_s(\tau_2)$.
  Since we have $F'\leqslant 0$ on $(0,\theta_1)$, we deduce that $F(\tau_1) \leqslant F(\tilde{\tau}) = F(\tau_2)$ and $F(\tau_2)\neq F(\tau_1)$ since $\phi(F(\tau_1))\leqslant M_s(\tau_1)=M_s(\tau_2)<\phi(F(\tau_2))$.
  
  Then, we take 
\begin{equation*}
u(t)=\left\{\begin{array}{ll}
0,&\mbox{for }t\in(0,\tau_2-\tau_1),\\
u^*(t-\tau_2+\tau_1),&\mbox{for }t\in(\tau_2-\tau_1,\tau_2),\\
u^*(t),&\mbox{for }t\in(\tau_2,T).
\end{array}\right.
\end{equation*}  
Using Lemma \ref{lemma:Q,R}, we have $J(u)\leqslant J(u^*)$.
  Moreover, if we denote by $(F^u,M_s^u)$ the solution with this control $u$, we have $F^u(\tau_2) = F(\tau_1) < F(\tau_2)$ and $M_s^u(\tau_2)=M_s(\tau_1)\geqslant M_s(\tau_2)$. Then, on $(\tau_2,T)$, we have $M_s^u\geqslant M_s$ and since $M_s\mapsto f(F,M_s)$ is non-increasing $(F^u)' = f(F^u,M_s^u) \leqslant f(F^u,M_s)$. By comparison with the solution to $F'=f(F,M_s)$ on $(\tau_2,T)$ and since $F^u(\tau_2)<F(\tau_2)$, we deduce that $F^u(T) < F(T) = \varepsilon$ which contradicts Lemma~\ref{lemma:satur}.
\end{proof}

Let us introduce $(Q,R)$ as the solution of the forward adjoint system 
\begin{equation}\label{eq:dual3}
\left\{\begin{array}{l}
-\ds \frac{d }{d t}\begin{pmatrix} Q\\R\end{pmatrix} = \left(\begin{array}{ccc}\frac{\partial f}{\partial F}(F,M_s) & 0 \\
\frac{\partial f}{\partial M_s}(F,M_s) & -\delta_s
\end{array}\right)\begin{pmatrix} Q \\ R \end{pmatrix},\\
Q(T)=1,~R(T)=0.
\end{array}\right.
\end{equation}

Similarly to Problem~\eqref{eq:opt1}, any optimal control can be characterized by using the first order necessary optimality conditions, in terms of a switching function of the form $t\mapsto 1+\lambda R(t)$ with $\lambda\geq 0$.
\begin{lemma}\label{lemma:cond_opt 2eq}
Let $\bar U> U^*$ and $T\geqslant \bar T(\bar U)$.
Consider  $u^*$ a solution to Problem~\eqref{eq:opt2}.
Then there exists $\lambda> 0$ such that
\begin{equation*}
\left\{\begin{array}{l}
\text{a.e. on }\{u^*=0\}\text{, one has } 1+\lambda R(t)\geqslant 0,\\
\text{a.e. on }\{0<u^*<\overline{U}\}\text{, one has } 1+\lambda R(t)=0,\\
\text{a.e. on }\{u^*=\overline{U}\}\text{, one has } 1+\lambda R(t)\leqslant 0.
\end{array}\right.
\end{equation*}
\end{lemma}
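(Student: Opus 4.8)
The plan is to mirror, almost verbatim, the argument used for Problem~\eqref{eq:opt1} in Lemma~\ref{lemma:cond_opt 4eq}. First I would establish the analogue of Lemma~\ref{lemma:gat 4eq} for System~\eqref{eq:primal2}: the functional $G(u)=F(T)$, where $(F,M_s)$ is the solution of \eqref{eq:primal2} associated to $u$, is Fr\'echet-differentiable (via the implicit function theorem, since $f$ is smooth on the invariant region $(0,\bar F]\times\RR_+$ provided by Proposition~\ref{prop:2 eq steady}), and its linearization $(\dot F,\dot M_s)$ solves
\begin{equation*}
\frac{d}{dt}\begin{pmatrix}\dot F\\ \dot M_s\end{pmatrix}=\begin{pmatrix}\frac{\partial f}{\partial F}(F,M_s) & \frac{\partial f}{\partial M_s}(F,M_s)\\ 0 & -\delta_s\end{pmatrix}\begin{pmatrix}\dot F\\ \dot M_s\end{pmatrix}+\begin{pmatrix}0\\ h\end{pmatrix}
\end{equation*}
with zero initial data. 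Pairing this system with $(Q,R)$ solving \eqref{eq:dual3} and integrating by parts, the boundary terms at $t=0$ vanish and the terminal conditions $Q(T)=1$, $R(T)=0$ leave only $\dot F(T)$, so that $DG(u)\cdot h=\int_0^Th(t)R(t)\,dt$.

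Second, I would introduce the Lagrangian $\mathcal{L}(u,\lambda)=J(u)-\lambda(F(T)-\varepsilon)$ on $\mathcal{U}_{T,\overline{U}}\times\RR$ with $\mathcal{U}_{T,\overline{U}}:=\{u\in L^\infty(0,T):0\leqslant u\leqslant\overline{U}\}$. Since by Lemma~\ref{lemma:satur} any optimizer saturates the state constraint, i.e. $F(T)=\varepsilon$, standard first-order optimality conditions (the KKT rule, or equivalently the PMP with a transversality condition) yield a multiplier $\lambda\geqslant 0$ such that $D_u\mathcal{L}(u^*,\lambda)\cdot h\geqslant 0$ for every $h$ in the tangent cone to $\mathcal{U}_{T,\overline{U}}$ at $u^*$. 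I would then localize this inequality exactly as in the proof of Lemma~\ref{lemma:cond_opt 4eq}: taking $h=\mathds{1}_{H_n}$ with $H_n$ shrinking to a Lebesgue density-one point of $\{u^*=0\}$, dividing by $|H_n|$ and invoking the derivative formula yields $1+\lambda R(t)\geqslant 0$ a.e. on $\{u^*=0\}$; the symmetric argument with one-sided perturbations $u^*-\eta h$ supported in $\{u^*=\overline{U}\}$ gives $1+\lambda R\leqslant 0$ there, and bilateral perturbations $u^*\pm\eta h$ supported in $\{0<u^*<\overline{U}\}$ give $1+\lambda R=0$ there.

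Finally, I would rule out $\lambda=0$: in that case the switching function $1+\lambda R$ is constant equal to $1$, forcing $u^*=0$ a.e. on $(0,T)$; but with $u^*(\cdot)=0$ and $M_s(0)=0$ one has $M_s\equiv 0$, hence $F\equiv\bar F$ by uniqueness since $\bar F$ is a stationary point of $F'=f(F,0)$, contradicting $F(T)=\varepsilon<\bar F$. The only genuinely technical point is the Fr\'echet-differentiability and the integration-by-parts identity of the first step; everything else is a routine transcription, and in fact strictly simpler than the corresponding steps in Lemmas~\ref{lemma:gat 4eq}--\ref{lemma:cond_opt 4eq} because System~\eqref{eq:primal2} is two-dimensional.
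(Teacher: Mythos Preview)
Your proposal is correct and follows exactly the approach the paper intends: the paper simply states that the proof is similar to the one of Lemma~\ref{lemma:cond_opt 4eq}, and you have spelled out precisely that transcription (the G\^ateaux derivative computation via the adjoint pair $(Q,R)$, the Lagrangian with Lemma~\ref{lemma:satur} guaranteeing saturation, the Lebesgue density argument, and the exclusion of $\lambda=0$).
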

The proof is similar to the one of Lemma~\ref{lemma:cond_opt 4eq}.

\begin{lemma}\label{lemma:0 3 eq}
Let $\bar U> U^*$ and $T\geqslant \bar T(\bar U)$. There exists $t_1\in(0,T)$ such that
$u^*=0\mbox{ on }(t_1,T)$.
\end{lemma}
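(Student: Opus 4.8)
The plan is to mimic the argument used for Theorem~\ref{theo:opt 1} and Lemma~\ref{lemma:satur}, exploiting the switching function characterization of Lemma~\ref{lemma:cond_opt 2eq}. Recall that by Lemma~\ref{lemma:cond_opt 2eq} there exists $\lambda>0$ such that $u^*=0$ a.e.\ on the set $\{t\in(0,T): 1+\lambda R(t)>0\}$, where $(Q,R)$ solves the adjoint system \eqref{eq:dual3} with terminal data $Q(T)=1$, $R(T)=0$. Since $R$ is continuous (it solves a linear ODE with continuous coefficients, the coefficients being continuous because $(F,M_s)$ is $C^1$ and $F\in[\varepsilon,\bar F]$ on $(0,T)$ by Lemma~\ref{lemma:satur}) and $R(T)=0$, the key point is to show that $1+\lambda R(t)>0$ in a left-neighborhood of $T$.

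First I would analyze the behavior of $R$ near $T$. From the second line of \eqref{eq:dual3}, $-R'(t)=\frac{\partial f}{\partial M_s}(F(t),M_s(t))Q(t)-\delta_s R(t)$. At $t=T$ this gives $R'(T)=-\frac{\partial f}{\partial M_s}(F(T),M_s(T))Q(T)$. Now $Q(T)=1>0$, and from \eqref{eq:f1} one checks that $\frac{\partial f}{\partial M_s}<0$ whenever $F\in(0,\bar F]$ (this is exactly the sign used in Lemma~\ref{monotonu2}); since $F(T)=\varepsilon\in(0,\bar F)$ by Lemma~\ref{lemma:satur}, we get $\frac{\partial f}{\partial M_s}(F(T),M_s(T))<0$, hence $R'(T)>0$. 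Combined with $R(T)=0$, this shows $R(t)<0$ for $t$ slightly less than $T$, so $1+\lambda R(t)<1$; but more importantly, since $R$ is continuous with $R(T)=0$, by continuity $1+\lambda R(t)>0$ on $(T-\eta,T)$ for some $\eta>0$ (because $1+\lambda R(T)=1>0$). By Lemma~\ref{lemma:cond_opt 2eq}, this forces $u^*=0$ a.e.\ on $(T-\eta,T)$.

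To conclude, set $t_1=\inf\{t\in(0,T): 1+\lambda R(s)>0 \text{ for all } s\in(t,T)\}$; by the previous paragraph $t_1\leqslant T-\eta<T$, and $t_1\geqslant 0$ trivially, so $t_1\in[0,T)$. If $t_1>0$ we are done, taking any point of $(0,T)$ strictly between; if $t_1=0$ then $1+\lambda R>0$ on all of $(0,T)$, so $u^*\equiv 0$, which contradicts $F(T)=\varepsilon<\bar F=F(0)$ as in the proof of Lemma~\ref{lemma:cond_opt 2eq}. In either case we may pick $t_1\in(0,T)$ with $1+\lambda R>0$ on $(t_1,T)$, whence $u^*=0$ on $(t_1,T)$, which is the claim. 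The main obstacle here is purely the sign analysis of $\frac{\partial f}{\partial M_s}$ and the resulting monotonicity of the switching function near $T$; everything else is a routine transcription of the argument already used for the full system in the proof of Theorem~\ref{theo:opt 1}.
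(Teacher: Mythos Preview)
Your proof is correct and follows essentially the same route as the paper: the continuity of $R$ together with $R(T)=0>-1/\lambda$ yields a left-neighborhood of $T$ on which $1+\lambda R>0$, hence $u^*=0$ there by Lemma~\ref{lemma:cond_opt 2eq}. The sign analysis of $R'(T)$ and the separate treatment of the case $t_1=0$ are superfluous embellishments---the continuity argument alone already gives some $t_1\in(0,T)$---but they do no harm.
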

\begin{proof}
According to Lemma~\ref{lemma:cond_opt 2eq}, on the set $\{R>-1/\lambda\}$, one has necessarily $u=0$. We conclude using the fact that $R$ is continuous and $R(T)=0$.
\end{proof}

\begin{lemma}\label{lemma:Q,R}
Let $\bar U> U^*$ and $T\geqslant \bar T(\bar U)$.
There exist positive constants $C_0$, $C_1$, $C_2>0$ that do not depend on $\bar U$ and $T$, such that 
$$
|M_s|<C_0,\quad  0 < C_1 \leqslant Q \leqslant C_2.
$$
\end{lemma}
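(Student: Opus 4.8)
Lemma~\ref{lemma:Q,R} asserts three uniform bounds: $|M_s| < C_0$, and $0 < C_1 \leqslant Q \leqslant C_2$. The plan is to treat each in turn, the bound on $M_s$ being elementary and the bounds on $Q$ following from the explicit scalar structure of the adjoint system~\eqref{eq:dual3}.

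First I would bound $M_s$. For any admissible control $u^* \in \mathcal{U}_{T,\overline{U},\varepsilon}^{(\mathcal{S}_1)}$, the second equation of System~\eqref{eq:primal2} gives, together with the initial condition $M_s(0) = 0$ from \eqref{init}, the representation
$$
M_s(t) = \int_0^t e^{-\delta_s(t-s)} u^*(s)\,ds .
$$
A direct estimate is not enough here, since the pointwise bound $u^* \leqslant \overline{U}$ would make $C_0$ depend on $\overline{U}$; instead one should use the structural information from Lemmas~\ref{lemma:satur} and \ref{lem:f1neg}. Indeed, by Lemma~\ref{lem:f1neg}, $F$ is non-increasing on $(0,T)$, and since $F$ solves $F' = f(F,M_s)$ with $f$ given by \eqref{eq:f1}, the inequality $f(F,M_s) \leqslant 0$ is equivalent (as already computed in the proof of Lemma~\ref{lem:f1neg}) to $\delta_M \gamma_s M_s \geqslant \phi(F)$ where $\phi$ is the function displayed there. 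But $F \in (\varepsilon, \bar F]$, so in fact we only get a lower bound on $M_s$ that way; for the upper bound one argues that once $M_s$ exceeds the maximal value $\max_{[0,\bar F]} \phi / (\delta_M \gamma_s)$, the function $F$ is strictly decreasing, and if $M_s$ grew without a uniform bound, $F$ would be driven below $\varepsilon$ before time $T$ — but $T$ is bounded by continuity considerations. More carefully: it suffices to observe that on the support of $u^*$, i.e. on $(0,t_1)$ by Lemma~\ref{lemma:0 3 eq}, integrating the $F$-equation and using $F(0) - F(t_1) \leqslant \bar F - \varepsilon$ together with the fact that $-f(F,M_s)$ is bounded below by a positive multiple of $M_s - \phi(F)_+/(\delta_M\gamma_s)$ when this quantity is positive, yields a uniform bound on $\int_0^{t_1}(M_s(s) - \text{const})_+ \,ds$, hence on $M_s$ itself after using the ODE $M_s' \geqslant -\delta_s M_s$ and a Gronwall-type argument. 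This is the step I expect to require the most care, because making the constant genuinely independent of $\overline{U}$ and $T$ forces one to exploit the saturation $F(T) = \varepsilon$ and monotonicity rather than crude pointwise estimates.

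Next, the bounds on $Q$. The adjoint system~\eqref{eq:dual3} is triangular: the equation for $Q$ decouples,
$$
-Q'(t) = \frac{\partial f}{\partial F}(F(t),M_s(t))\, Q(t), \qquad Q(T) = 1,
$$
so that
$$
Q(t) = \exp\left( \int_t^T \frac{\partial f}{\partial F}(F(s),M_s(s))\,ds \right) > 0 .
$$
Thus positivity of $Q$ is immediate, and it remains to show $\frac{\partial f}{\partial F}$ stays bounded along the optimal trajectory. Since $F \in (\varepsilon, \bar F]$ is confined to a compact set bounded away from $0$, and $M_s \in [0, C_0)$ by the first part, $\frac{\partial f}{\partial F}$ — a smooth (rational, with non-vanishing denominator on this region) function of $(F,M_s)$ — is bounded on the compact set $[\varepsilon, \bar F] \times [0, C_0]$ by some constant $L$ depending only on the parameters (not on $\overline{U}$ or $T$). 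Hence $e^{-L(T-t)} \leqslant Q(t) \leqslant e^{L(T-t)}$, which unfortunately still depends on $T$. To remove this dependence one uses Lemma~\ref{lemma:0 3 eq}: on $(t_1, T)$ the control vanishes, so $M_s(t) = M_s(t_1) e^{-\delta_s(t-t_1)}$ decays and $F$ is governed by its free dynamics; moreover the proof should show (this is the point where one invokes the structure of the problem, e.g.\ that $T - t_1$ is itself uniformly bounded, or alternatively that $\frac{\partial f}{\partial F}$ is negative near the persistence equilibrium so the integrand has a favourable sign) that $\int_{t_1}^T |\frac{\partial f}{\partial F}| \,ds$ and $\int_0^{t_1} |\frac{\partial f}{\partial F}|\,ds$ are each uniformly bounded. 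Setting $C_1 = e^{-L_0}$ and $C_2 = e^{L_0}$ for the resulting uniform bound $L_0$ on $\int_t^T |\partial_F f|$ then gives $0 < C_1 \leqslant Q \leqslant C_2$.

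The main obstacle, as indicated, is ensuring all constants are independent of both $\overline{U}$ and $T$. For $T$ this rests on showing the ``active'' part of the trajectory — roughly the interval $[t_0, t_1]$ where releases occur plus a bounded tail — has length controlled independently of $T$ (which is consistent with the numerical observation in Section~\ref{sec:num} that the waiting time $T - \tau_2(\tau_1)$ is insensitive to $T$, and with the statement in Theorem~\ref{theo:opt 2} that $J$ is constant for $T > T^*$). For $\overline{U}$ it rests on the saturation constraint $F(T) = \varepsilon$ and the monotonicity Lemmas~\ref{monotonu2}, \ref{lem:f1neg}: no admissible control can make $M_s$ arbitrarily large while still ending exactly at $F(T) = \varepsilon$ rather than overshooting below it. I would organize the proof so that the $M_s$ bound is established first, then fed into the compactness argument for $\partial_F f$, then into the explicit exponential formula for $Q$.
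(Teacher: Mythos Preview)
Your approach is considerably more elaborate than the paper's. The paper argues in two lines: first, since $e^{\delta_s(s-t)}\leqslant 1$ for $s\leqslant t$,
\[
M_s(t)=\int_0^t u^*(s)e^{\delta_s(s-t)}\,ds\leqslant \int_0^T u^*(s)\,ds = J(u^*)\leqslant \bar U\,\bar T,
\]
using the bound $J_{T,\bar U,\varepsilon}^{(\mathcal S_1)}\leqslant \bar U\,\bar T$ from Lemma~\ref{lemma:2 eq exist}. Second, since $F\in[\varepsilon,\bar F]$ by Lemma~\ref{lem:f1neg} and $M_s$ is bounded, $\partial_F f(F,M_s)$ is bounded in absolute value by some $\mu$, and Gronwall on the scalar equation $-Q'=\partial_F f\cdot Q$, $Q(T)=1$, yields two-sided bounds on $Q$. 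That is the entire proof.

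You are right, however, to be uneasy about the independence from $T$ and $\bar U$: the paper's own argument produces $C_0=\bar U\,\bar T(\bar U)$ and $C_1,C_2=e^{\mp\mu T}$, which visibly depend on those parameters. The statement of the lemma therefore appears to overclaim relative to what its proof delivers. Your strategy --- exploiting the saturation $F(T)=\varepsilon$, the monotonicity of $F$, and the bounded length of the active interval $(t_0,t_1)$ --- is the natural route to genuine uniformity, and indeed those ingredients are what the paper later uses (see the proof of Theorem~\ref{theo:opt 2}) to show $t_1$ is bounded independently of $T$. But in your sketch several steps remain heuristic: the mechanism by which a large $M_s$ ``overshoots'' $\varepsilon$ is not made quantitative, and the argument that $\int_t^T|\partial_F f|$ is uniformly bounded is only asserted. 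If your aim is to match the paper, the short argument above suffices; if your aim is to actually secure the claimed independence, your outline is the right direction but needs the gaps filled.
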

\begin{proof}
First, one has for all $t\in [0,T]$,
$$
M_s(t)=\int_0^tu^*(s)e^{\delta_s(s-t)}ds\leqslant \int_0^Tu^*(t) \,dt
 \leqslant \bar T\bar U.
$$
From \eqref{eq:dual3}, $Q$ solves
$$
-\frac{dQ }{d t} = \frac{\pa f}{\pa F}(F,M_s) Q  ,\qquad
Q(T) = 1.
$$
Note that, on $[0,T]$, all quantities are bounded: one has $\eps\leqslant F(\cdot)\leqslant F(0)=\bar F$ according to Lemma~\ref{lem:f1neg}. Hence, we infer the existence of $\mu>0$ such that 
$-\mu \leqslant \frac{\partial f}{\partial F}(F,M_s) \leqslant \mu$. Thus, integrating the inequality $-\mu Q\leqslant -\frac{dQ }{d t}  \leqslant \mu Q$ with $Q(T)=1$, the conclusion follows easily by using a Gronwall type argument {(see e.g. \cite{gronwall1919note})}.
\end{proof}

\begin{lemma}\label{lemma:0 3 eq begin}
Let $\bar U> U^*$ and $T\geqslant \bar T(\bar U)$. If $0<u^*<\bar U$ on a nonempty interval $(s_0,s_1)$ then
$u^*$ satisfies \eqref{3eq:edo u} on $(s_0,s_1)$.
\end{lemma}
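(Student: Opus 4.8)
The plan is to exploit the necessary optimality conditions from Lemma~\ref{lemma:cond_opt 2eq} on the interval where the optimal control is interior. On $(s_0,s_1)$ we have $0<u^*<\bar U$, so the switching function vanishes identically there: $1+\lambda R(t)=0$ for a.e.\ $t\in(s_0,s_1)$, with $\lambda>0$ fixed. Since $R$ is $C^1$ (it solves the linear ODE system~\eqref{eq:dual3} with continuous coefficients, $F$ and $M_s$ being continuous on $[0,T]$), the identity $R\equiv -1/\lambda$ on $(s_0,s_1)$ can be differentiated: $R'(t)=0$ on $(s_0,s_1)$, and then $R''(t)=0$ as well once we check $R'$ is itself $C^1$ there. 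First I would write out $R'=0$ using the second line of~\eqref{eq:dual3}, namely $-R' = \frac{\partial f}{\partial M_s}(F,M_s)Q - \delta_s R$; plugging $R=-1/\lambda$ gives a relation $\frac{\partial f}{\partial M_s}(F,M_s)Q = -\delta_s/\lambda$, i.e. $Q=\frac{\delta_s/\lambda}{-\partial f/\partial M_s}$ (note $\partial f/\partial M_s<0$ on $(0,\bar F]$, and by Lemma~\ref{lemma:Q,R}, $Q$ is bounded away from $0$, so this is consistent and $\lambda$ cancels cleanly).

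Next I would differentiate this relation once more in $t$. Differentiating $\frac{\partial f}{\partial M_s}(F(t),M_s(t))\,Q(t)=-\delta_s/\lambda$ yields
\[
\left(\frac{\partial^2 f}{\partial M_s\partial F}F' + \frac{\partial^2 f}{\partial M_s^2}M_s'\right)Q + \frac{\partial f}{\partial M_s}Q' = 0,
\]
where all partials are evaluated at $(F(t),M_s(t))$. Now I substitute the state and adjoint dynamics: $F'=f(F,M_s)$ and $M_s' = u^* - \delta_s M_s$ from~\eqref{eq:primal2}, and $Q' = -\frac{\partial f}{\partial F}(F,M_s)\,Q$ from~\eqref{eq:dual3}. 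This gives
\[
\left(\frac{\partial^2 f}{\partial M_s\partial F}f + \frac{\partial^2 f}{\partial M_s^2}(u^*-\delta_s M_s)\right)Q - \frac{\partial f}{\partial M_s}\frac{\partial f}{\partial F}Q = 0.
\]
Since $Q>0$ by Lemma~\ref{lemma:Q,R}, I can divide through by $Q$, obtaining an algebraic relation involving only $f$ and its partials at $(F(t),M_s(t))$ and $u^*(t)$:
\[
\frac{\partial^2 f}{\partial M_s\partial F}f + \frac{\partial^2 f}{\partial M_s^2}(u^*-\delta_s M_s) - \frac{\partial f}{\partial M_s}\frac{\partial f}{\partial F} = 0.
\]
Solving for $u^*$ (legitimate since $\partial^2 f/\partial M_s^2\neq 0$, which I should verify from the explicit form~\eqref{eq:f1} — $f$ is of the form $a/(b+cM_s)-\delta_F F$ in $M_s$, so $\partial^2 f/\partial M_s^2 = 2ac^2/(b+cM_s)^3 \neq 0$) gives exactly formula~\eqref{3eq:edo u}.

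The main obstacle — or rather, the point requiring the most care — is the regularity bookkeeping needed to justify differentiating the switching-function identity twice. I need $R\in C^1$ to get $R'=0$, and then I need enough regularity to differentiate the relation $\frac{\partial f}{\partial M_s}(F,M_s)Q=-\delta_s/\lambda$ once more in $t$: this requires $F,M_s,Q$ to be $C^1$ on $(s_0,s_1)$ (true: $F,M_s$ solve~\eqref{eq:primal2} with $u^*$ locally bounded, and $Q$ solves a linear ODE), and it requires $f$ to be $C^2$ in a neighborhood of the trajectory (true on $\{F\in(0,\bar F]\}$ by inspection of~\eqref{eq:f1}, the denominators not vanishing there since all parameters are positive and $F\geq\varepsilon>0$ by Lemma~\ref{lemma:satur}). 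A subtle point is that a priori $u^*$ is only in $L^\infty$, so $M_s'$ is only $L^\infty$; however, the identity I derive expresses $u^*(t)$ as a continuous function of $(F(t),M_s(t))$, which a posteriori shows $u^*$ has a continuous representative on $(s_0,s_1)$, and then $M_s\in C^2$, $F\in C^2$ there — so the formal computation is self-consistent. I would present the argument by first establishing the pointwise-a.e.\ identity via the distributional derivative of $R$ (constant on the interval), then noting the continuity upgrade.
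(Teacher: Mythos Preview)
Your proposal is correct and follows essentially the same approach as the paper's proof: use Lemma~\ref{lemma:cond_opt 2eq} to get $R$ constant on $(s_0,s_1)$, differentiate the adjoint relation for $R$ once (equivalently, differentiate the identity $\frac{\partial f}{\partial M_s}Q = -\delta_s/\lambda$), substitute the state and adjoint dynamics, divide by $Q>0$ via Lemma~\ref{lemma:Q,R}, and solve for $u^*$. Your version is in fact more careful than the paper's on two points the paper leaves implicit: the regularity bookkeeping justifying the differentiations, and the verification that $\partial^2 f/\partial M_s^2\neq 0$ so that the division in~\eqref{3eq:edo u} is legitimate.
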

\begin{proof}
According to Lemma~\ref{lemma:cond_opt 2eq}, one has $R'=0$ on $(s_0,s_1)$.
Differentiating the equation satisfied by $R$ in \eqref{eq:dual3} yields
\begin{equation*}
\left(\frac{\partial^2 f}{\partial M_s\partial F} F'+\frac{\partial^2 f}{\partial M_s^2}M_s'\right)Q+\frac{\partial f}{\partial M_s}Q'=0.
\end{equation*}
We deduce that
\begin{equation*}
\left(\frac{\partial^2 f}{\partial M_s\partial F}f+\frac{\partial^2 f}{\partial M_s^2}(u-\delta_sM_s)\right)Q=\frac{\partial f}{\partial M_s}\frac{\partial f}{\partial F} Q.
\end{equation*}
According to Lemma~\ref{lemma:Q,R}, one has $Q>0$ on $[0,T]$. Hence $u$ is given by 
\begin{equation*}
u = \left(\frac{\partial^2 f}{\partial M_s^2}\right)^{-1} \left(\frac{\partial f}{\partial M_s}\frac{\partial f}{\partial F}+ \frac{\partial^2 f}{\partial M_s^2} \delta_s M_s -\frac{\partial^2 f}{\partial M_s\partial F}f  \right),
\end{equation*}
where $f$ is given by \eqref{eq:f1}. We thus recover \eqref{3eq:edo u}.
\end{proof}

\begin{lemma}\label{lemma:bar U 2 eq}
Let $\bar U> U^*$ and $T\geqslant \bar T(\bar U)$.
\begin{enumerate}
\item If $2\delta_s>\delta_F$, then, on each open interval of $\{R>-1/\lambda\}$, any local extremum for $R$ is a minimum.
\item If $\bar U$ is large enough, then, on each open interval of  $\{R<-1/\lambda\}$, any local extremum for $R$ is a maximum.
\end{enumerate}
\end{lemma}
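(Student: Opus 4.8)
The statement concerns the sign of the second derivative of the switching function $R$ at its interior critical points, on the two regions $\{R>-1/\lambda\}$ (where $u^*=0$) and $\{R<-1/\lambda\}$ (where $u^*=\bar U$). The starting point is the adjoint system \eqref{eq:dual3}: on any interval where $u^*$ is constant (equal to $0$ or to $\bar U$), $R$ is $C^2$ and we may differentiate $R'=-\frac{\partial f}{\partial M_s}(F,M_s)Q$ once more to obtain an expression for $R''$ in terms of $Q$, $R$, $F$, $M_s$ and the derivatives of $f$. The first step is therefore to compute $R''$ at a point where $R'=0$: differentiating and using $R'=0$ kills the term $\big(\tfrac{\partial^2 f}{\partial M_s\partial F}F'+\tfrac{\partial^2 f}{\partial M_s^2}M_s'\big)Q$-dependence in the way already done in Lemma~\ref{lemma:0 3 eq begin}, so that at a critical point
\[
R''=-Q\left(\frac{\partial^2 f}{\partial M_s\partial F}\,f+\frac{\partial^2 f}{\partial M_s^2}\,(u^*-\delta_s M_s)\right)+\frac{\partial f}{\partial M_s}\,\frac{\partial f}{\partial F}\,Q\cdot\frac{1}{\,\frac{\partial f}{\partial M_s}\,}\cdot(\dots)
\]
(the precise bookkeeping being routine); since $Q>0$ on $[0,T]$ by Lemma~\ref{lemma:Q,R}, the sign of $R''$ at a critical point is the sign of an explicit algebraic expression $E(F,M_s,u^*)$ built from $f$ and its first and second $M_s$-derivatives.

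\textbf{Main computation.} The key is to exploit the explicit form \eqref{eq:f1} of $f$. Writing $A(F)=\tfrac{\beta_E F}{K}+\nu_E+\delta_E$, one sees that $f(F,M_s)$ is of the form $\dfrac{a(F)}{b(F)+c(F)M_s}-\delta_F F$ with $a,b,c>0$ on $(0,\bar F]$ and $c(F)=\delta_M\gamma_s A(F)$. Hence $\tfrac{\partial f}{\partial M_s}=-\dfrac{a c}{(b+cM_s)^2}<0$ and $\tfrac{\partial^2 f}{\partial M_s^2}=\dfrac{2ac^2}{(b+cM_s)^3}>0$, while $\tfrac{\partial^2 f}{\partial M_s\partial F}$ is a rational function whose sign can be controlled. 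Substituting these into $E(F,M_s,u^*)$ and clearing the positive denominator $(b+cM_s)^k$, the condition ``$R''>0$'' (resp. ``$R''<0$'') at a critical point reduces to a polynomial inequality in the data. For point (i), with $u^*=0$, the term $-\delta_s M_s\tfrac{\partial^2 f}{\partial M_s^2}$ together with the contribution of $\tfrac{\partial f}{\partial M_s}\tfrac{\partial f}{\partial F}$ and of $f\tfrac{\partial^2 f}{\partial M_s\partial F}$ must be shown to have a fixed sign; I expect that after using $f(F,M_s)=R'\cdot(\text{something})$-type relations and the bound $F\in[\varepsilon,\bar F]$, the hypothesis $2\delta_s>\delta_F$ is exactly what is needed to force $E<0$, i.e. $R''>0$, so that every interior critical point of $R$ on $\{R>-1/\lambda\}$ is a local minimum. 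For point (ii), with $u^*=\bar U$, the term $-\bar U\,\tfrac{\partial^2 f}{\partial M_s^2}$ is negative and grows linearly in $\bar U$, whereas the remaining terms are bounded uniformly in $\bar U$ and $T$ thanks to Lemmas~\ref{lemma:Q,R} and~\ref{lem:f1neg} (which pin $F$ to $[\varepsilon,\bar F]$ and $M_s$ to a bounded set on the singular/bang structure, or more carefully on the relevant interval); hence for $\bar U$ large enough $E>0$, i.e. $R''<0$, and every interior critical point on $\{R<-1/\lambda\}$ is a local maximum.

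\textbf{Carrying it out.} Concretely I would: (1) differentiate the $R$-equation of \eqref{eq:dual3} and, imposing $R'=0$, derive $R''=-Q\,\Phi(F,M_s,u^*)$ with $\Phi=\tfrac{\partial^2 f}{\partial M_s\partial F}f+\tfrac{\partial^2 f}{\partial M_s^2}(u^*-\delta_s M_s)-\tfrac{\partial f}{\partial M_s}\tfrac{\partial f}{\partial F}$, exactly the quantity already appearing in the proof of Lemma~\ref{lemma:0 3 eq begin}; (2) plug in the explicit partial derivatives of \eqref{eq:f1}, reduce $\Phi$ to a single rational fraction with positive denominator, and read off that $\operatorname{sign}R''=-\operatorname{sign}(\text{numerator of }\Phi)$; (3) for (i), set $u^*=0$, expand the numerator, and show it is positive on $\{F\in[\varepsilon,\bar F],\ M_s\geqslant 0\}$ using $2\delta_s>\delta_F$ and Assumption~\eqref{eq:cond prop} (in particular $\mathcal R_0>1$); (4) for (ii), set $u^*=\bar U$, isolate the $\bar U$-linear term $-\tfrac{\partial^2 f}{\partial M_s^2}\bar U<0$, bound all other terms by a constant independent of $\bar U,T$ via the a priori bounds of Lemma~\ref{lemma:Q,R}, and conclude that the numerator is negative for $\bar U$ above an explicit threshold, hence $R''<0$. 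The main obstacle I anticipate is step (3): showing the fixed sign of a multi-variable polynomial in $(F,M_s)$ is not automatic, and it may require rewriting $\Phi$ using the relation $\delta_M\gamma_s M_s=\phi(F)+ (\text{nonneg. term})$ coming from $f\leqslant 0$ (Lemma~\ref{lem:f1neg}) — i.e. using $F'\leqslant 0$ along the optimal trajectory — to eliminate $M_s$ in favor of a single-variable estimate in $F$, where the condition $2\delta_s>\delta_F$ becomes transparent. Everything else (the $\bar U$-large asymptotics, the positivity of $Q$, the $C^2$ regularity of $R$ where $u^*$ is constant) is routine given the lemmas already established.
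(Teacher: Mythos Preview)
Your proposal is correct and follows essentially the same approach as the paper: differentiate the $R$-equation to get $-R''=\Phi\,Q-\delta_s R'$ with $\Phi=\tfrac{\partial^2 f}{\partial M_s\partial F}f+\tfrac{\partial^2 f}{\partial M_s^2}(u^*-\delta_s M_s)-\tfrac{\partial f}{\partial M_s}\tfrac{\partial f}{\partial F}$, then for (i) write $f=\mu F^2\Lambda-\delta_F F$ and use $f\leqslant 0$ together with $2\delta_s>\delta_F$ (plus an explicit parameter check of the type $\beta>a\alpha$) to force $\Phi<0$, and for (ii) use the uniform positive lower bound on $\tfrac{\partial^2 f}{\partial M_s^2}$ from Lemma~\ref{lemma:Q,R} so that the $\bar U$-term dominates. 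Just watch your signs in step~(3): since $R''=-Q\Phi$, you need the numerator of $\Phi$ to be \emph{negative} (not positive) to obtain $R''>0$ and hence a minimum.
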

\begin{proof}
\begin{itemize}
\item[(i)]
By differentiating the equation on $R$ in \eqref{eq:dual3}, we get
\begin{equation}\label{eq:R''}
-R''=\left(\frac{\partial^2 f}{\partial M_s\partial F}f + \frac{\partial^2 f}{\partial M_s^2}(u^*-\delta_s M_s) - \frac{\partial f}{\partial M_s}\frac{\partial f}{\partial F}\right) Q-\delta_s R'\quad \text{on $(0,T)$.}
\end{equation}
Let $I$ be an open interval of $\{R>-1/\lambda\}$ (whenever it exists). 
Let $\tau\in I$ be a local extremum for $R$, we have $R'(\tau)=0$.
Then, thanks to Lemma \ref{lemma:cond_opt 2eq}, we have $u^*=0$ on $I$ and hence
$$
-R''(\tau)=\left(\frac{\partial^2 f}{\partial M_s\partial F}f - \frac{\partial^2 f}{\partial M_s^2}\delta_s M_s - \frac{\partial f}{\partial M_s}\frac{\partial f}{\partial F}\right)(\tau) Q(\tau).
$$
We have seen in Lemma \ref{lemma:Q,R} that $Q>0$.
Then, the sign of $-R''(\tau)$ is the sign of $\mathfrak{U}$ given by
$$
\mathfrak{U}=\frac{\partial^2 f}{\partial M_s\partial F}f - \frac{\partial^2 f}{\partial M_s^2}\delta_s M_s - \frac{\partial f}{\partial M_s}\frac{\partial f}{\partial F}.
$$
Let us compute $\mathfrak{U}$. Notice that $f$ is of the form
\begin{equation}
  \label{eq:Lambda}
  f(F,M_s) = \mu F^2 \Lambda - \delta_F F\qquad \text{where}\qquad 
  \Lambda := \frac{1}{F^2 + a F + M_s(\alpha F^2 + \beta F + \gamma)}
\end{equation}
with some positive constants $\mu, a, \alpha, \beta, \gamma$.
Observe that
$$
\frac{\pa \Lambda}{\pa M_s} = -\Lambda^2(\alpha F^2 + \beta F + \gamma) , \qquad 
\frac{\pa \Lambda}{\pa F} = -  \Lambda^2(2F+a+M_s(2\alpha F + \beta)),
$$
and therefore one computes
\begin{eqnarray*}
\frac{\pa f}{\pa M_s} &=& -\mu F^2 \Lambda^2 (\alpha F^2 + \beta F + \gamma),\\
\frac{\pa f}{\pa F} &=& 2 \mu F \Lambda -\mu F^2 \Lambda^2 (2F+ a + M_s (2\alpha F + \beta))  - \delta_F,\\
\frac{\pa^2 f}{\pa M_s^2} &=& 2 \mu F^2\Lambda^3 (\alpha F^2 + \beta F + \gamma)^2\\
  \frac{\pa^2 f}{\pa M_s\pa F} &=& - \mu F \Lambda^2(2(\alpha F^2 + \beta F + \gamma) + F(2\alpha F + \beta)), \\
 & & + 2 \mu F^2 \Lambda^3 (2F+a+M_s(2\alpha F+\beta))(\alpha F^2 + \beta F + \gamma).
\end{eqnarray*}
After straightforward but tedious computations, we find
\begin{align*}
 & \frac{\partial^2 f}{\partial M_s\partial F}f\ -\  \frac{\partial f}{\partial M_s}\frac{\partial f}{\partial F}\\
 &  =  -\mu F^2\Lambda^3   (2\mu F(\alpha F^2 + \beta F + \gamma) + \mu F^2 (2\alpha F + \beta)) \\
 & ~~~ +2\mu^2 F^4 \Lambda^4 (2F+a+M_s(2\alpha F + \beta)) (\alpha F^2 + \beta F + \gamma)  \\
&~~~   + \delta_F F \Lambda^2 \big(2\mu F(\alpha F^2 + \beta F +\gamma) + \mu F^2 (2\alpha F + \beta)\big) \\
&~~~  - 2\mu \delta_F F^3 \Lambda^3 (2F+a+M_s(2\alpha F + \beta))(\alpha F^2 + \beta F + \gamma)  \\
 &~~~  + \mu F^2 \Lambda^2 (\alpha F^2 + \beta F + \gamma) (2\mu F \Lambda - \delta_F - \mu(2F+a+M_s(2\alpha F+ \beta)) \Lambda^2 F^2) \\
&= \ \mu^2 F^4 \Lambda^4 \Big( (2F+a + M_s(2\alpha F+\beta))(\alpha F^2 + \beta F + \gamma) - \frac{2\alpha F + \beta}{\Lambda} \Big) \\
&~~~ + \mu F^2 \Lambda^3 \delta_F \times  \Big(\frac{3\alpha F^2 + 2 \beta F +\gamma}{\Lambda} 
\\&~~~- 2F(2F+a+M_s(2\alpha F + \beta))(\alpha F^2 +\beta F + \gamma)\Big).
\end{align*}
Using the expression of $1/\Lambda$ from \eqref{eq:Lambda}, we get
\begin{align*}
  \frac{\partial^2 f}{\partial M_s\partial F}f - \frac{\partial f}{\partial M_s}\frac{\partial f}{\partial F}
= & \mu^2 F^4 \Lambda^4 ( (\beta - \alpha a) F^2 + 2\gamma  F + a \gamma)  \\
  & + \mu F^2 \Lambda^3 \delta_F (-\alpha F^4+(a\alpha-2\beta)F^3-3\gamma F^2-a\gamma F)) \\
  & + \mu F^2 \Lambda^3 \delta_F M_s(\alpha F^2+\beta F + \gamma) (\gamma-\alpha F^2).
\end{align*}
Finally, we obtain
\begin{eqnarray*}
\mathfrak{U}&=&  \mu^2 F^4 \Lambda^4 ( (\beta - \alpha a) F^2 + 2\gamma F  + a \gamma)\\&&+ \mu F^3 \Lambda^3 \delta_F (-\alpha F^3+(a\alpha-2\beta)F^2-3\gamma F-a\gamma )) \\
 & & + \mu F^2 \Lambda^3 \delta_F M_s(\alpha F^2+\beta F + \gamma) (\gamma-\alpha F^2) \\&&-\delta_sM_s 2 \mu F^2  \Lambda^3(\alpha F^2 + \beta F + \gamma)^2.
\end{eqnarray*}
Let us use that $2\delta_s>\delta_F$. Noting that $f\leqslant 0$ as a consequence of Lemma~\ref{lem:f1neg}, which rewrites $\mu F \Lambda \leqslant \delta_F$, we obtain 
\begin{eqnarray}\label{eq:est R''}
\mathfrak{U} &\leqslant &- \mu^2 F^4 \Lambda^4   \alpha a F^2+ \mu F^3 \Lambda^3 \delta_F (-\alpha F^3+(a\alpha-\beta)F^2-\gamma F )) \\
 &&  - \mu F^2 \Lambda^3 \delta_F M_s(\alpha F^2+\beta F + \gamma) (2\alpha F^2+\beta F).\nonumber
\end{eqnarray}
We observe that this quantity is negative whenever $\beta>a\alpha$, which is the case since
\begin{equation*}
a=\frac{K(\nu_E+\delta_E)}{\beta_E},\quad 
\alpha=\frac{\delta_M\gamma_s}{K(1-\nu)\nu_E}\quad \mbox{ and }\quad
\beta=\frac{2\delta_M\gamma_s(\nu_E+\tau_E)}{(1-\nu)\beta_E\nu_E}.
\end{equation*}
\item[(ii)] Let $I$ {be} an open interval of $\{R<-1/\lambda\}$. 
Let $\tau\in I$ be a local extremum of $R$. Therefore, we have $R'(\tau)=0$. According to Lemma \ref{lemma:cond_opt 2eq}, one has $u^*=\bar U$ on $I$. 
Hence, from \eqref{eq:R''}, one gets
\begin{equation}\label{eq:R''2}
-R''=\left(\frac{\partial^2 f}{\partial M_s\partial F}f + \frac{\partial^2 f}{\partial M_s^2}(\bar{U}-\delta_s M_s) - \frac{\partial f}{\partial M_s}\frac{\partial f}{\partial F}\right) Q-\delta_s R'.
\end{equation}
Recall that $\eps\leqslant F(\cdot)\leqslant F(0)=\bar F$ according to Lemma~\ref{lem:f1neg}. By using also Lemma~\ref{lemma:Q,R}, we get the existence of $C>0$ independent of $T$ and $\bar U$ such that
\begin{equation}\label{eq:est fMM}\begin{array}{rcl}
\frac{\partial^2 f}{\partial M_s^2}(F,M_s)& =&  \frac{2\nu(1-\nu)\beta_E^2 \nu_E^2 F^2\delta_M^2 \gamma_s^2(\frac{\beta_E F}{K} + \nu_E+\delta_E)}{\big((1-\nu)\nu_E \beta_E F+\delta_M \gamma_s M_s(\frac{\beta_E F}{K} + \nu_E+\delta_E)\big)^3}\\
&\geqslant & \frac{2\nu(1-\nu)\beta_E^2 \nu_E^2 F^2\delta_M^2 \gamma_s^2(\nu_E+\delta_E)}{\big((1-\nu)\nu_E \beta_E \bar F+\delta_M \gamma_s C_0(\frac{\beta_E \bar F}{K} + \nu_E+\delta_E)\big)^3}>C.\end{array}
\end{equation} 
A similar reasoning shows that all the other terms in \eqref{eq:R''2} are uniformly bounded with respect to $T$ and $\bar U$. Thus, we can find $\bar{U}$ (independent on $T$) large enough such that the right-hand side is positive.
Then, $R''(\tau)< 0$, which implies $R$ admits a local maximum at $\tau$.
\end{itemize}
\end{proof}
\begin{lemma}\label{lem:u=0}
Let us make the same assumption as in Proposition~\ref{prop:2 eq steady}. Consider the dynamical system \eqref{eq:primal2} with $u=0$, i.e.
  $$
  \frac{dF}{dt} = f(F,M_s), \qquad  \frac{dM_s}{dt} = - \delta_s M_s,
  $$
  where $f$ is given in \eqref{eq:f1}.
  If $F'(\tau)\geqslant 0$ then, for all $t> \tau$, we have $F'(t)\geqslant 0$. 
\end{lemma}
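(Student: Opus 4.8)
The plan is to read Lemma~\ref{lem:u=0} as a sign-invariance property of $F'$ along the trajectory and to prove it through an integrating-factor argument applied to the second-order equation governing $F'$. First I would record the elementary a priori estimates: along any solution of the uncontrolled system, $M_s(t)=M_s(0)e^{-\delta_s t}\geqslant 0$, and since $F'=f(F,M_s)\geqslant -\delta_F F$ (the term $\mu F^2\Lambda$ in the notation of \eqref{eq:Lambda} being non-negative), one gets $F(t)\geqslant F(0)e^{-\delta_F t}>0$, exactly as in the Remark on exact null-controllability. Thus the trajectory stays in the region $\{F>0,\ M_s\geqslant 0\}$ on which $f$ is smooth, so $F$ is of class $C^2$ on $[0,+\infty)$ and all the partial derivatives of $f$ appearing below are evaluated where they are defined.

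Next I would differentiate the relation $F'=f(F,M_s)$ in time, using $M_s'=-\delta_s M_s$, which yields the linear second-order identity
\begin{equation*}
F''-\frac{\partial f}{\partial F}(F,M_s)\,F' \;=\; -\,\delta_s M_s\,\frac{\partial f}{\partial M_s}(F,M_s) \qquad\text{on }(0,+\infty).
\end{equation*}
The key structural observation is that the right-hand side is non-negative: $\delta_s>0$, $M_s\geqslant 0$, and $\dfrac{\partial f}{\partial M_s}\leqslant 0$ on $\{F>0\}$ --- the very monotonicity already invoked in Lemma~\ref{monotonu2}, readable off the explicit expression $\dfrac{\partial f}{\partial M_s}=-\mu F^2\Lambda^2(\alpha F^2+\beta F+\gamma)$ from the proof of Lemma~\ref{lemma:bar U 2 eq}, where $\mu,\alpha,\beta,\gamma>0$.

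It then remains to integrate. Writing $p(t):=\dfrac{\partial f}{\partial F}(F(t),M_s(t))$, which is continuous on $[0,+\infty)$, and $P(t):=\int_\tau^t p(s)\,ds$, the identity above says that $\dfrac{d}{dt}\bigl(F'(t)e^{-P(t)}\bigr)\geqslant 0$, so that $t\mapsto F'(t)e^{-P(t)}$ is non-decreasing on $[\tau,+\infty)$. Hence, for every $t>\tau$,
\begin{equation*}
F'(t)\,e^{-P(t)} \;\geqslant\; F'(\tau)\,e^{-P(\tau)} \;=\; F'(\tau) \;\geqslant\; 0,
\end{equation*}
and since $e^{-P(t)}>0$ this forces $F'(t)\geqslant 0$, which is the claim. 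I do not expect any genuine obstacle here: the only points needing care are the a priori positivity of $F$ along the trajectory (so that $f$ and its derivatives live in the domain of smoothness and $\partial_{M_s}f$ keeps a constant sign) and the fact that the forcing term $-\delta_s M_s\,\partial_{M_s}f$ is sign-definite, which is precisely the monotonicity of $f$ in $M_s$ that the paper has already established.
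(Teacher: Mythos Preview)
Your proof is correct and self-contained, but it takes a genuinely different route from the paper's argument. The paper proves the lemma geometrically: it observes that $f(F,M_s)\geqslant 0$ is equivalent (for $F>0$) to $0\leqslant M_s\leqslant \phi(F)$ for an explicit function $\phi$ obtained from $f(F,\phi(F))=0$, and then checks that the set $\mathcal{E}=\{(F,M_s):f(F,M_s)\geqslant 0\}$ is forward-invariant for the uncontrolled flow, because on the boundary $\{M_s=\phi(F)\}$ the vector field is $(0,-\delta_s M_s)^\top$, which points into $\mathcal{E}$.

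Your argument instead differentiates $F'=f(F,M_s)$ along the trajectory, obtains the scalar linear ODE $F''-\partial_F f(F,M_s)\,F'=-\delta_s M_s\,\partial_{M_s}f(F,M_s)\geqslant 0$, and concludes by an integrating factor. The two approaches hinge on exactly the same structural fact---the monotonicity $\partial_{M_s}f\leqslant 0$ together with $M_s'\leqslant 0$---but package it differently: the paper reads it as the boundary of $\mathcal{E}$ being crossed inward, you read it as a non-negative forcing term in a Gronwall-type inequality for $F'$. Your version has the minor advantage of not requiring one to solve $f=0$ for $M_s$ and argue about the shape of $\mathcal{E}$; the paper's version is perhaps more transparent from a phase-plane viewpoint and connects directly to the function $\phi$ already used in Lemma~\ref{lem:f1neg}.
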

\begin{proof}
  This is a consequence of the fact that the set 
  $$
  \mathcal{E}:=\{(F,M_s)\in \RR_+\times (0,+\infty), \text{ such that } f(F,M_s) \geqslant 0\}
  $$ 
  is stable by the aforementioned dynamical system. Indeed, on {$\RR_+^2$}, $f(F,M_s) \geqslant 0$ if, and only if, $0 \leqslant M_s \leqslant \phi(F)$, where the function $\phi$ is obtained by solving the implicit equation $f(F,\phi(F))=0$. 
 If at some time $\tau>0$, a trajectory crosses the part of the boundary of $\mathcal E$ defined by the implicit equation, the vector field defining the right-hand side of the differential system reads
 $$
 [f(F(\tau),M_s(\tau)),-\delta_sM_s(\tau)]^\top=[0,-\delta_sM_s(\tau)]^\top.
 $$  
 This field is vertically directed, and it points inward for $\mathcal E$. The conclusion is similar on the other parts of the boundary of $\mathcal{E}$, whence the stability of this zone.  
\end{proof}

\begin{proof}[Proof of Theorem \ref{theo:opt 2}]
We separately deal with the characterization and uniqueness properties minimizers.

\medskip

\noindent \textbf{Step 1: characterization of optimal controls.} 
Notice first that the sets $\{R>-1/\lambda\}$ and $\{R<-1/\lambda\}$ are open, as inverse images of open sets by continuous functions (and thus contain an interval).
By combining Lemma~\ref{lemma:bar U 2 eq}, Lemma~\ref{lemma:0 3 eq}, and the fact that $R$ is continuous with $R(T)=0$, we get  
the existence of $(t_0,t_1)\in[0,T]^2$ such that $0\leq t_0\leq t_1 < T$ and 
\begin{itemize}
\item $R>-1/\lambda$ on $(0,t_0)$ or $R<-1/\lambda$ on $(0,t_0)$;
\item $R=-1/\lambda$ on $(t_0,t_1)$;
\item $R>-1/\lambda$ on $(t_1,T)$. 
\end{itemize}
Combined with Lemmas~\ref{lemma:cond_opt 2eq} and \ref{lemma:0 3 eq begin}, we deduce the form of the optimal control in 
Theorem \ref{theo:opt 2}.


Let us now prove that for $T$ large enough, we have $t_0>0$ and $u^*=0$ on $(0,t_0)$.
Assume by contradiction that $u^*=\bar U$ on $(0,t_0)$ or $t_0=0$.
Recall that, according to Lemma~\ref{lemma:2 eq exist}, we have
  $$
  J_{T,\bar{U},\varepsilon}^{(\mathcal{S}_1)} = \int_0^T u^*(t)\,dt \leqslant \bar{U}\,\bar{T}.
  $$
 However, combining the expression of $u^*$ given in \eqref{3eq:edo u} with the estimates \eqref{eq:est R''} and  \eqref{eq:est fMM} show that
  $$
  u^*(t) \geqslant \tilde C>0,\quad \text{ on $(t_0,t_1)$},
  $$
  where $\tilde C$ does not depend on {neither $T$ nor} $\bar U$. Without loss of generality, we can assume that $\tilde C<U^*$. Hence, one has 
  $$
  \int_0^{t_1}u^*(t)\,dt \geqslant t_1\tilde C
  $$
(since $u^*=\bar U>U^*>\tilde C$ on $(0,t_0)$).
It follows that $t_1\leqslant \bar{U}\bar{T}/\tilde{C}$ is uniformly bounded with respect to $T$. On $(t_1,T)$, one has $u^*=0$. 
 Let $F_{\bar U}$ denote the trajectory associated to the control choice $u_{\bar U}=\bar U\mathds{1}_{[0,\bar U\bar T/\tilde C]}$. According to the monotonicity property stated in Lemma~\ref{monotonu2}, one has $F\geqslant F_{\bar U}$ in $\RR_+$ since $u\leqslant u_{\bar U}$ . Furthermore, according to Proposition~\ref{prop:2 eq steady}, $F_{\bar U}(T)$ converges to the steady state $\bar{F}$ as $T\to +\infty$. Let $T_{\bar U}>0$ be given such that $F_{\bar U}(t)\geqslant (\eps+\bar F)/2>\eps$ in $(T_{\bar U},+\infty)$. If $T>T_{\bar U}$, one thus has $F(T)>\eps$ which contradicts the fact that $F(T)=\eps$ (see Lemma \ref{lemma:satur}). 
 
 Hence, there exists $T^*>0$ large enough such that, for $T>T^*$, we have $t_0>0$, and $u^*=0$ on $(0,t_0)$. Necessarily, $t_1>t_0$, otherwise $u^*=0$ a.e. on $(0,T)$ which is not possible, since in this case $F(t)=\bar{F}>\varepsilon$ on $(0,T)$.

  Let us notice that if $T>T^*$, we have
   \begin{equation}\label{eq:F(T)}
   F'(T)\geqslant 0.
  \end{equation} 
  Indeed, since $T>T^*$, we have seen that the optimal control has the form $u^*=u^* \mathds{1}_{(t_0,t_1)}$ for $0<t_0<t_1<T$. If $F'(T)<0$, then there exists $T_1>T$ such that $F(T_1)<\varepsilon$ and $T_1-T<t_0$. Then, by taking $u = u^* \mathds{1}_{(t_0-T_1+T,t_1-T_1+T)}$, we obtain a control on $(0,T)$ such that $J(u) = J(u^*)$ and $F(T)<\varepsilon$. However,this is not possible (see Lemma \ref{lemma:satur}). Thus, $F'(T)\geqslant 0$.

  Finally, let us show the claimed stationarity property of optimal values. To this aim, let us consider $T_1>T_2>T^*$. Since $T\mapsto J_{T,\bar{U},\varepsilon}$ is non-increasing, then $J_{T_1,\bar{U},\varepsilon}^{(\mathcal{S}_1)} \leqslant J_{T_2,\bar{U},\varepsilon}^{(\mathcal{S}_1)}$.
  Let us show that $J_{T_1,\bar{U},\varepsilon}^{(\mathcal{S}_1)} = J_{T_2,\bar{U},\varepsilon}^{(\mathcal{S}_1)}$. By contradiction, assume that $J_{T_1,\bar{U},\varepsilon}^{(\mathcal{S}_1)} < J_{T_2,\bar{U},\varepsilon}^{(\mathcal{S}_1)}$.
  Let us denote by $u_1^*$ (resp. $u_2^*$), the optimal solution on $(0,T_1)$ (resp. $(0,T_2)$). Then, from the results above, there exist $t_0^{(1)}<t_1^{(1)}$ and $t_0^{(2)}<t_1^{(2)}$ such that $u_1^* = u^*(\cdot-t_0^{(1)}) \mathds{1}_{(t_0^{(1)},t_1^{(1)})}$ and $u_2^* = u^*(\cdot-t_0^{(2)}) \mathds{1}_{(t_0^{(2)},t_1^{(2)})}$ with the same expression of $u^*$ given in \eqref{3eq:edo u}. From
  $$
   \int_{t_0^{(1)}}^{t_1^{(1)}} u^*(t-t_0^{(1)})\,dt =J_{T_1,\bar{U},\varepsilon}^{(\mathcal{S}_1)}
   < J_{T_2,\bar{U},\varepsilon}^{(\mathcal{S}_1)}=\int_{t_0^{(2)}}^{t_1^{(2)}} u^*(t-t_0^{(2)})\,dt,
  $$
  we deduce that $t_1^{(1)}-t_0^{(1)} < t_1^{(2)}-t_0^{(2)}$.
  Notice that, denoting $F_1$, resp. $F_2$, the solution to \eqref{eq:primal2} with $u=u_1^*$, resp. $u=u_2^*$, we have $F_1(t+t_0^{(1)}) = F_2(t+t_0^{(2)})$ for each $t\in(0,t_1^{(1)}-t_0^{(1)})$. Moreover, from the above remark, we have that $F_1'(T_1)\geqslant 0$, $F_2'(T_2)\geqslant 0$, and (using Lemma \ref{lem:u=0}) that $F_2'(t) > 0$ for $t> T_2$.
  Then, $F_1(t_1^{(1)}) = F_2(t_1^{(1)}+t_0^{(2)}-t_0^{(1)}) > F_2(t_1^{(2)})$.
  Since $t\mapsto F_1(t+t_1^{(1)})$ and $t\mapsto F_2(t+t_1^{(2)})$ verify the same dynamical system on $(0,T_1-t_1^{(1)})$, we deduce that $F_1(t+t_1^{(1)}) > F_2(t+t_1^{(2)})$ which implies in particular that $F_1(T_1) > F_2(T_1+t_1^{(2)}-t_1^{(1)}) \geqslant \varepsilon$. This is in contradiction with the fact that $F_1(T_1)=\varepsilon$ (see Lemma \ref{lemma:satur}).

  It remains to show that $F$ has its minimal value at $T$ and that $F'(T)=0$. Let us extend the definition of $F$ to $\RR_+$ by setting $u^*=0$ in $(T,+\infty)$. We already know that $F$ is non-increasing on $[0,T]$ according to Lemma~\ref{lem:f1neg} and therefore, $F'(T)\leq 0$, which leads to the conclusion since $F'(T)\geqslant 0$
  (see \eqref{eq:F(T)}).

\medskip


\noindent \textbf{Step 2: uniqueness property.} 
To conclude the proof, let us prove the uniqueness of the optimal control if $T>T^*$. We will use a constructive argument which is also used to derive an algorithm for solving numerically this problem in Section~\ref{sec:algo}.

For $(t_0,t_1)\in [0,T]^2$ such that $t_0\leqslant t_1$, introduce $(F_{(t_0,t_1)},{M_s}_{(t_0,t_1)})$ as the solution of the Cauchy problem
$$
\left\{\begin{array}{l}
\displaystyle\frac{d}{dt}\begin{pmatrix}F\\ M_s\end{pmatrix}  = \begin{pmatrix} f(F,M_s)\\ u - \delta_s M_s \end{pmatrix} \text{\quad in }[0,+\infty)
\\
\text{with }
u_{(t_0,t_1)} =\frac{\frac{\partial f}{\partial M_s}(F,M_s)\frac{\partial f}{\partial F}(F,M_s)+ \frac{\partial^2 f}{\partial M_s^2}(F,M_s) \delta_s M_s -\frac{\partial^2 f}{\partial M_s\partial F}(F,M_s)f(F,M_s) }{\frac{\partial^2 f}{\partial M_s^2}(F,M_s)}\mathds{1}_{[t_0,t_1]}  ,
\end{array}\right.
$$
complemented with the initial conditions $F(0) = \bar{F}$ and $M_s(0)=0$.

Let us first highlight that, with the notations of Theorem~\ref{theo:opt 2}, it is enough to consider the case where $t_0$ is equal to 0.
\begin{lemma}\label{lemma:ms0914}
Under the assumptions of Theorem~\ref{theo:opt 2} and if $T>T^*$, $u_{(t_0,t_1)}$ solves Problem~~\eqref{eq:opt2} if and only if the function $\tilde u$ given by $\tilde u({t})=\left. u_{(t_0,t_1)}\right|_{(t_0,T)}({t} -t_0)$ solves Problem~$(\mathcal{P}_{T-t_0,\overline{U},\varepsilon}^{(\mathcal{S}_1)})$.
\end{lemma}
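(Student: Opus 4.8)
The proof rests on one elementary observation. Since, by Proposition~\ref{prop:2 eq steady}, $(\bar F,0)$ is an equilibrium of System~\eqref{eq:primal2} when $u=0$, and the initial data \eqref{init} are exactly $(F(0),M_s(0))=(\bar F,0)$, any admissible control vanishing a.e. on $(0,t_0)$ produces a trajectory frozen at $(\bar F,0)$ on $[0,t_0]$. Hence, writing $u_{(t_0,t_1)}$ for the control in the statement and $\tilde u = u_{(t_0,t_1)}|_{(t_0,T)}(\,\cdot\, - t_0)$, uniqueness for the Cauchy problem gives $\bigl(F_{(t_0,t_1)}(t),{M_s}_{(t_0,t_1)}(t)\bigr)=\bigl(\tilde F(t-t_0),\tilde M_s(t-t_0)\bigr)$ for $t\in[t_0,T]$, where $(\tilde F,\tilde M_s)$ solves \eqref{eq:primal2} with control $\tilde u$ on $[0,T-t_0]$. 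I would then record three immediate consequences: (i) $F_{(t_0,t_1)}(T)=\tilde F(T-t_0)$, so $u_{(t_0,t_1)}\in\mathcal{U}_{T,\overline{U},\varepsilon}^{(\mathcal{S}_1)}$ iff $\tilde u\in\mathcal{U}_{T-t_0,\overline{U},\varepsilon}^{(\mathcal{S}_1)}$; (ii) $J(u_{(t_0,t_1)})=\int_{t_0}^{t_1}u_{(t_0,t_1)}=\int_0^{t_1-t_0}\tilde u=J(\tilde u)$; (iii) more generally, $v\mapsto\hat v$, with $\hat v=0$ on $(0,t_0)$ and $\hat v=v(\,\cdot\,-t_0)$ on $(t_0,T)$, is a cost-preserving bijection from $\mathcal{U}_{T-t_0,\overline{U},\varepsilon}^{(\mathcal{S}_1)}$ onto $\mathcal{U}^{t_0}:=\{v\in\mathcal{U}_{T,\overline{U},\varepsilon}^{(\mathcal{S}_1)}:v=0\text{ a.e. on }(0,t_0)\}$, with $\hat{\tilde u}=u_{(t_0,t_1)}$. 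In particular $\tilde u$ minimizes $J$ over $\mathcal{U}_{T-t_0,\overline{U},\varepsilon}^{(\mathcal{S}_1)}$ iff $u_{(t_0,t_1)}$ minimizes $J$ over $\mathcal{U}^{t_0}$.

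For the direct implication, assume $u_{(t_0,t_1)}$ solves \eqref{eq:opt2}. Since $\mathcal{U}^{t_0}\subset\mathcal{U}_{T,\overline{U},\varepsilon}^{(\mathcal{S}_1)}$ and $u_{(t_0,t_1)}\in\mathcal{U}^{t_0}$, $u_{(t_0,t_1)}$ a fortiori minimizes $J$ over $\mathcal{U}^{t_0}$, so by (iii), $\tilde u$ solves Problem~$(\mathcal{P}_{T-t_0,\overline{U},\varepsilon}^{(\mathcal{S}_1)})$. (If $u_{(t_0,t_1)}$ is not admissible, by (i) neither is $\tilde u$, and the biconditional holds vacuously.)

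For the converse, assume $\tilde u$ solves Problem~$(\mathcal{P}_{T-t_0,\overline{U},\varepsilon}^{(\mathcal{S}_1)})$; by (iii), $u_{(t_0,t_1)}$ minimizes $J$ over $\mathcal{U}^{t_0}$, and it remains to prove $\min_{\mathcal{U}^{t_0}}J=J_{T,\overline{U},\varepsilon}^{(\mathcal{S}_1)}$, i.e. that some global minimizer of \eqref{eq:opt2} lies in $\mathcal{U}^{t_0}$. Here I would use the structural facts established in Step~1: since $T>T^*$, every optimal control $u^*$ of \eqref{eq:opt2} is of the form $u_{(t_0^*,t_1^*)}$ with $0<t_0^*<t_1^*<T$, it vanishes on $(0,t_0^*)$ and equals the positive singular feedback \eqref{3eq:edo u} on $(t_0^*,t_1^*)$; moreover $T'\mapsto J_{T',\overline{U},\varepsilon}^{(\mathcal{S}_1)}$ is non-increasing. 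If $t_0^*\geqslant t_0$, then $u^*\in\mathcal{U}^{t_0}$ and we are done. The remaining case $t_0^*<t_0$ is the main obstacle: applying the direct implication to $u^*$ shows that $u^*|_{(t_0^*,T)}(\,\cdot\,-t_0^*)$ solves $(\mathcal{P}_{T-t_0^*,\overline{U},\varepsilon}^{(\mathcal{S}_1)})$ and, like $\tilde u$, starts directly with the singular arc \eqref{3eq:edo u} (with no initial plateau at $0$ or at $\overline{U}$, the latter being excluded because \eqref{3eq:edo u} is bounded below by a constant independent of $\overline{U}$, hence strictly less than $\overline{U}$ for $\overline{U}$ large). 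The plan is to deduce from the bang--singular--bang classification of Lemmas~\ref{lemma:bar U 2 eq} and \ref{lemma:0 3 eq begin}, together with the monotonicity of $T'\mapsto J_{T',\overline{U},\varepsilon}^{(\mathcal{S}_1)}$, that a problem $(\mathcal{P}_{T',\overline{U},\varepsilon}^{(\mathcal{S}_1)})$ can admit an optimizer with no initial plateau only for a single horizon $T'$ (for larger horizons the waiting time is positive by Step~1, for smaller ones the budget constraint forces the initial plateau at $\overline{U}$); this single value is precisely $T^*$, and it forces $T-t_0=T^*=T-t_0^*$, hence $t_0=t_0^*$, contradicting $t_0^*<t_0$. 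Once $t_0=t_0^*$ is known, $u^*\in\mathcal{U}^{t_0}$, so $\min_{\mathcal{U}^{t_0}}J=J_{T,\overline{U},\varepsilon}^{(\mathcal{S}_1)}$ and $u_{(t_0,t_1)}$ solves \eqref{eq:opt2}. I expect this last step --- pinning down that the waiting time of an optimal control is uniquely determined by the horizon --- to be where the real work lies; everything else is bookkeeping built on the trajectory correspondence of the first paragraph.
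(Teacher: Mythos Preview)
Your direct implication is correct and matches the paper's argument exactly: $u_{(t_0,t_1)}$ being optimal over all of $\mathcal{U}_{T,\overline{U},\varepsilon}^{(\mathcal{S}_1)}$ is in particular optimal over the subclass of controls vanishing on $(0,t_0)$, and the cost-preserving bijection with $\mathcal{U}_{T-t_0,\overline{U},\varepsilon}^{(\mathcal{S}_1)}$ finishes it.

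For the converse, however, you work much harder than necessary and miss the decisive ingredient. This lemma is stated and proved \emph{after} Step~1, where the constancy of $T'\mapsto J_{T',\overline{U},\varepsilon}^{(\mathcal{S}_1)}$ on $(T^*,+\infty)$ has already been established. With that in hand the converse is one line, exactly symmetric to the direct implication: if $\tilde u$ solves $(\mathcal{P}_{T-t_0,\overline{U},\varepsilon}^{(\mathcal{S}_1)})$ then $u_{(t_0,t_1)}$ is admissible for $(\mathcal{P}_{T,\overline{U},\varepsilon}^{(\mathcal{S}_1)})$ with cost $J(u_{(t_0,t_1)})=J(\tilde u)=J_{T-t_0,\overline{U},\varepsilon}^{(\mathcal{S}_1)}$, and since the value function is constant on $(T^*,+\infty)$ (and non-increasing overall), this equals $J_{T,\overline{U},\varepsilon}^{(\mathcal{S}_1)}$, so $u_{(t_0,t_1)}$ is optimal. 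This is precisely what the paper means by ``exactly similar''.

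Your alternative route --- arguing that a problem $(\mathcal{P}_{T',\overline{U},\varepsilon}^{(\mathcal{S}_1)})$ can admit an optimizer with no initial plateau only for a single horizon $T'$ --- is not only unnecessary but also contains a genuine gap: the assertion that ``for smaller horizons the budget constraint forces the initial plateau at $\overline{U}$'' is nowhere established in Step~1. Step~1 only shows that on $(0,t_0)$ the optimal control is either $0$ or $\overline{U}$, and that for $T>T^*$ it is $0$; it says nothing about horizons $T'\leqslant T^*$ being forced to start at $\overline{U}$. So the ``real work'' you anticipate is in fact avoidable once you invoke the constancy already proved.
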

Let us prove this lemma. The characterization of optimal controls in the previous step shows that any solution of Problem~~\eqref{eq:opt2} is of the form $u_{(t_0,t_1)}$  for some $0\leq t_0<t_1\leq T$. Observe that 
$$
\int_0^T u_{(t_0,t_1)}(t)\, dt=\int_0^{T-t_0}\tilde u(t)\, dt
$$
and furthermore, denoting respectively by $F_{u_{(t_0,t_1)}}$ and $F_{\tilde u}$ the trajectories associated to $u_{(t_0,t_1)}$ and $\tilde u$, one has by construction $F_{u_{(t_0,t_1)}}(T)=F_{\tilde u}(T-t_0)=\varepsilon$.
Since $T\in (T^*,+\infty)\mapsto J_{T,\bar{U},\varepsilon}^{(\mathcal{S}_1)}$ is nondecreasing, it is constant on $(T-t_0,T)$ and it follows that $\tilde u$ necessarily solves Problem~$(\mathcal{P}_{T-t_0,\overline{U},\varepsilon}^{(\mathcal{S}_1)})$. The proof of converse sense is exactly similar (and left to the reader).
This ends the proof of Lemma~\ref{lemma:ms0914}.

Let us argue by contradiction to prove the uniqueness of optimizers, assuming that Problem~\eqref{eq:opt2} has two solutions $u_{(t_0,t_1)}$ and $u_{(t_0',t_1')}$.
According to Lemma~\ref{lemma:ms0914} above, $u_{(0,t_1-t_0)}$ and $u_{(0,t_1'-t_0')}$
solve Problems~$(\mathcal{P}_{T-t_0,\overline{U},\varepsilon}^{(\mathcal{S}_1)})$ and $(\mathcal{P}_{T-t_0',\overline{U},\varepsilon}^{(\mathcal{S}_1)})$
respectively.
Without loss of generality, assume that $t_1'-t_0'\leqslant t_1-t_0$.
Since these controls are non-negative and $\int_{(0,T)}u_{(0,t_1-t_0)}=\int_{(0,T)}u_{(0,t_1'-t_0')}$, we infer that the function
$$
\frac{\frac{\partial f}{\partial M_s}(F,M_s)\frac{\partial f}{\partial F}(F,M_s)+ \frac{\partial^2 f}{\partial M_s^2}(F,M_s) \delta_s M_s -\frac{\partial^2 f}{\partial M_s\partial F}(F,M_s)f(F,M_s) }{\frac{\partial^2 f}{\partial M_s^2}(F,M_s)}
$$
vanishes on $(t_1-t_0,t_1'-t_0')$. 
If $t_1'-t_0'< t_1-t_0$, a standard regularity argument for Cauchy problems yields that {$u_{(0,t_1-t_0)}$} is real analytic on $(0,t_1-t_0)$, and vanishes hence identically on $(0,T)$. Therefore, $F_{u_{(0,t_1-t_0)}}(\cdot)=\bar F$ on $\RR_+$, which is impossible since $F_{u_{(0,t_1-t_0)}}(T-t_0)=\varepsilon$. 
Thus $t_1'-t_0'= t_1-t_0$ and $F_{u_{(0,t_1-t_0)}}=F_{u_{(0,t_1'-t_0')}}$ on $\RR_+$.
The function $F_{u_{(0,t_1-t_0)}}$ is $C^1$ on $\RR_+$ and one has $\lim_{t\to +\infty}F_{u_{(0,t_1-t_0)}}(t)=\bar F$, according to Proposition~\ref{prop:2 eq steady} and since the persistence equilibrium is the only one not being unstable. 
Let us denote by $t_2$ the time at which $F_{u_{(0,t_1-t_0)}}$ admits its first local minimum. Thanks to Lemma~\ref{lem:u=0}, $F_{u_{(0,t_1-t_0)}}$ decreases on $(0,t_2)$ and increases on $(t_2,+\infty)$. 
Since {$u_{(0,t_1-t_0)}$} solves Problems~$(\mathcal{P}_{T-t_0,\overline{U},\varepsilon}^{(\mathcal{S}_1)})$ and $(\mathcal{P}_{T-t_0',\overline{U},\varepsilon}^{(\mathcal{S}_1)})$, one has
$F_{u_{(0,t_1-t_0)}}'(T-t_0)=F_{u_{(0,t_1-t_0)}}'(T-t_0')=F'(t_2)=0$.
We deduce that $T-t_0=T-t_0'=t_2$, which shows the uniqueness of $t_0$ and $t_1$.
\end{proof}

\subsection{Proof of Property~\ref{prop:num}}
{We recall that, for each $\tau_1[0,T)$, 
$u_{\tau_1}$ and $F_{\tau_1}$, denote the solution to System \eqref{eq:primal3}, in particular $u_{\infty}$ and $F_{\infty}$ are the solutions to \eqref{eq:primal3} for $\tau_1=\infty$. 
}
Notice first that 
{$F^{\tau_1}$ is $C^1$ on $(0,+\infty)$ for each $\tau_1\in[0,T)$.}  
Denote by $t_{\mbox{\tiny max}}\in (0,+\infty]$ the minimal time $t$ at which it holds either  $(F^{\infty})'(t)> 0$ or $u_{\infty}(t)< 0$ {(we do not know a priori if $u_{\infty}$ can be negative or not since $u_{\infty}$ is the solution to System~\eqref{eq:primal3}, not to the optimal control problem)}. Its existence results from the form of optimal controls (see Theorem~\ref{theo:opt 2}).
Let us first prove that $t_{\mbox{\tiny max}}=+\infty$. 
Assume by contradiction that $t_{\mbox{\tiny max}}<+\infty$.
According to Proposition~\ref{prop:2 eq steady} and since the persistence equilibrium is the only one being not unstable, one has $\lim_{t\to +\infty}F^{t_{\mbox{\tiny max}}}(t)=\bar F$. 
Let $\tau_2$ be the time at which $F^{t_{\mbox{\tiny max}}}$ admits its first local minimum.
According to Lemma~\ref{lem:u=0}, $F^{t_{\mbox{\tiny max}}}$ is decreasing on $(0,\tau_2)$ and increasing on $(\tau_2,\infty)$. Hence, there exists $\delta>0$ such that $F^{t_{\mbox{\tiny max}}}\geqslant \delta$.
To reach a contradiction, let us consider a particular choice of $\varepsilon$, such that $\varepsilon\in (0,\delta)$. Let $T>0$ be large enough so that Problem~$(\mathcal{P}_{T,\overline{U},\varepsilon}^{(\mathcal{S}_1)})$ is well-posed (see Theorem~\ref{theo:opt 2}) and let $u_{(t_0,t_1)}$ be its unique solution.
According to Lemma~\ref{lemma:ms0914}, $u_{t_1-t_0}$
solves Problem~$(\mathcal{P}_{T-t_0,\overline{U},\varepsilon}^{(\mathcal{S}_1)})$.
Since $u_{t_1-t_0}$ is positive and $(F^{t_1-t_0})'> 0$ on $(0,t_1-t_0)$,
one has $t_1-t_0\leqslant t_{\mbox{\tiny max}}$.
By Lemma~\ref{monotonu2}, one has $F^{t_1-t_0}\geqslant F^{t_{\mbox{\tiny max}}}>\delta$ on $\RR_+$,
 which is in contradiction with the fact $F^{t_1-t_0}(T)=\varepsilon$. Thus $t_{\mbox{\tiny max}}=\infty$.

Item (i) can be obtained with the same reasoning as above and the fact that $t_{\mbox{\tiny max}}=+\infty$.
 Property (ii) is a consequence of Lemma~\ref{monotonu2} and again of the equality $t_{\mbox{\tiny max}}=+\infty$. 
Finally, the proof of the last claim (iii), establishing connections between Problem~\eqref{eq:opt2} under its general form and the control $u_{\tau_1}$, follows from Lemma~\ref{lemma:ms0914}, used to get the uniqueness of optimal controls in the proof of Theorem~\ref{theo:opt 2}.

\section{Comments on the optimal control problem and modeling issues}
\label{sec:otherpb}

In this section, we introduce two other possible choices of functionals to minimize and compare it to the one analyzed in the previous sections.
\subsection{\texorpdfstring{$L^2$} ~ functional}

Consider the optimal control problem
\begin{equation}\label{eq:opt3}
\underset{u\in \mathcal{U}_{T,\overline{U},\varepsilon}^{(\mathcal{S}_1)}}{\mbox{inf}}\tilde J(u),
\tag{\mbox{$\tilde{\mathcal{P}}_{T,\overline{U},\varepsilon}^{(\mathcal{S}_1)}$}}
\end{equation}
where the functional $\tilde J$ stands for the square of the $L^2$-norm of released mosquitoes over the horizon of time $T$, namely
\begin{equation*}
\tilde J(u):=\int_0^Tu(t)^2dt
\end{equation*} 
and 
$\mathcal{U}_{T,\overline{U},\varepsilon}^{(\mathcal{S}_1)}$ is defined by \eqref{calU2}.

\begin{theorem}\label{theo: L2}
Let $\eps\in(0,\bar{F})$.
For any $\overline{U} > U^*$ (defined by \eqref{eq:U*}), there exists a minimal time $\overline{T}(\bar U)>0$ such that for all $T\geqslant \overline{T}(\bar U)$,
the set ${\mathcal U}_{T,\overline{U},\varepsilon}^{(\mathcal{S}_1)}$ is nonempty and the optimal control
problem \eqref{eq:opt3} has a solution $u^*$. 
Moreover, for $\bar U>U^*$ and $T>\bar T(\bar U)$ and  large enough, there exists $\mu>0$ and $t_0\in [0,T)$ such that 
$$
u^*=\left\{\begin{array}{ll}
\bar U & \text{on }[0,t_0)\\
-\mu R, & \text{ on } [t_0,T]
\end{array}\right.
$$ 
where $R$ solves the dual system \eqref{eq:dual3}. Moreover $u^*(T)=0$ 
and the mappings $T\in [\bar T,+\infty)\mapsto \tilde  J_{T,\overline{U},C}^{(\mathcal{S}_1)}$ and $ \bar U\in (U^*,+\infty)\mapsto \tilde J_{T,\overline{U},C}^{(\mathcal{S}_1)}$ are non-increasing.
\end{theorem}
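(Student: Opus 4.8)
\textbf{Plan of proof of Theorem~\ref{theo: L2}.}
The strategy closely parallels the proofs of Theorems~\ref{theo:opt 1} and~\ref{theo:opt 2}, so I would organize it into three blocks: existence, necessary optimality conditions, and the precise structure of $u^*$. First, for existence: the nonemptiness of $\mathcal{U}_{T,\overline{U},\varepsilon}^{(\mathcal{S}_1)}$ for $\overline{U}>U^*$ and $T$ large is \emph{exactly} Lemma~\ref{lemma:2 eq exist} (it only concerns the constraint set, not the functional), so $\overline{T}(\bar U)$ is the same. The existence of a minimizer follows the compactness argument in the proof of Lemma~\ref{lemma:4 eq exist}: take a minimizing sequence $(u_n)$, extract a weak-$*$ limit $u$ in $L^\infty(0,T)$, use Ascoli to pass to the limit in the state equation and recover $F^u(T)\leqslant\varepsilon$; the only new point is lower semicontinuity of $\tilde J(u)=\int_0^T u^2$, which holds because $\tilde J$ is convex and strongly continuous, hence weakly-$*$ lower semicontinuous on the bounded set $\{0\leqslant u\leqslant\bar U\}$. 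The monotonicity of $\bar U\mapsto\tilde J_{T,\overline{U},\varepsilon}^{(\mathcal{S}_1)}$ and $T\mapsto\tilde J_{T,\overline{U},\varepsilon}^{(\mathcal{S}_1)}$ is obtained verbatim as in Lemma~\ref{lemma:2 eq exist}: the admissible sets are nested (Lemma~\ref{monotonu2}), and in the time-monotonicity argument one extends an optimal control by $0$ on $[0,T_2-T_1]$, which leaves $\tilde J$ unchanged since the extension is by zero.

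Second, the necessary optimality conditions. One saturates the constraint exactly as in Lemma~\ref{lemma:satur}: if $F(T)<\varepsilon$ one replaces $u^*$ by $(1-\eta)u^*$, which strictly decreases $\tilde J$ and keeps feasibility for small $\eta$ by continuity of $u\mapsto F(T)$; hence $F(T)=\varepsilon$. Then one writes the Lagrangian $\mathcal{L}(u,\lambda)=\tilde J(u)-\lambda(F(T)-\varepsilon)$ on $\mathcal{U}_{T,\overline{U}}\times\mathbb{R}$ and, using the Fr\'echet-differentiability of $G(u)=F(T)$ with $DG(u)\cdot h=\int_0^T h R$ (this is Lemma~\ref{lemma:gat 4eq} adapted to System~\eqref{eq:primal2}, i.e.\ the forward adjoint system \eqref{eq:dual3}), one gets $D_u\mathcal{L}(u^*,\lambda)\cdot h=\int_0^T h(t)(2u^*(t)+\lambda R(t))\,dt\geqslant 0$ for all admissible $h$. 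The usual Lebesgue-density-point argument (as in Lemma~\ref{lemma:cond_opt 4eq}) then yields the pointwise conditions: a.e.\ on $\{u^*=0\}$, $2u^*+\lambda R\geqslant 0$; a.e.\ on $\{0<u^*<\bar U\}$, $2u^*+\lambda R=0$; a.e.\ on $\{u^*=\bar U\}$, $2u^*+\lambda R\leqslant 0$. One checks $\lambda>0$ by contradiction: if $\lambda=0$ then $u^*\geqslant 0$ and $u^*\leqslant 0$ a.e., so $u^*\equiv 0$ and $F(T)=\bar F>\varepsilon$, impossible. Setting $\mu=\lambda/2>0$, the interior relation gives $u^*=-\mu R$ whenever $0<u^*<\bar U$, and the boundary relations show $u^*=\bar U$ where $-\mu R\geqslant\bar U$ and $u^*=0$ where $-\mu R\leqslant 0$; in other words $u^*=\min(\bar U,\max(0,-\mu R))$ a.e. Since $R$ is continuous with $R(T)=0$, there is a right-neighborhood of $T$ on which $-\mu R<\bar U$ (actually $\to 0$), so on that neighborhood $u^*=\max(0,-\mu R)$ and in particular $u^*(T)=0$; this already gives $\tilde J_{T,\overline{U},\varepsilon}^{(\mathcal{S}_1)}\leqslant \tilde J(\bar U\mathds{1}_{[T-\bar T,T]})=\bar U^2\bar T<\infty$.

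Third, the claimed bang--then--feedback structure ``$u^*=\bar U$ on $[0,t_0)$ and $u^*=-\mu R$ on $[t_0,T]$'' for $\bar U$ and $T$ large. The key is to show that the saturation set $\{u^*=\bar U\}$, when nonempty, is an initial interval $[0,t_0)$, i.e.\ $-\mu R\geqslant\bar U$ is equivalent to $t<t_0$ for some $t_0$. I would mimic Lemma~\ref{lemma:bar U 2 eq}(ii): on an open interval where $u^*=\bar U$, differentiate \eqref{eq:dual3} twice to express $-R''$ in terms of $\partial^2_{M_s M_s}f\cdot(\bar U-\delta_s M_s)$ plus terms that are uniformly bounded in $T,\bar U$ (using $\varepsilon\leqslant F\leqslant\bar F$ from the analogue of Lemma~\ref{lem:f1neg}, the bound $M_s\leqslant\bar T\bar U$... but here one needs $M_s$ bounded independently of $\bar U$, so instead one uses that on the bang interval $M_s\leqslant$ const via a direct estimate, together with the lower bound \eqref{eq:est fMM} on $\partial^2_{M_s M_s}f$); then for $\bar U$ large, $-R''<0$ at any critical point of $R$ inside $\{-R\geqslant\bar U/\mu\}$, so $R$ has no interior local minimum there, forcing the set $\{-\mu R\geqslant\bar U\}$ to be a single interval containing $0$. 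I expect \emph{this last step to be the main obstacle}: unlike in Theorem~\ref{theo:opt 2}, the threshold $-\mu R=\bar U$ itself depends on $\bar U$ and on the unknown multiplier $\mu=\lambda/2$, so one must argue that $\lambda$ stays bounded (from the saturation relation and $F(T)=\varepsilon$ one gets control on $\lambda$) in order to make the ``$\bar U$ large'' estimate uniform; also one must separately rule out $t_0=0$ being forced, i.e.\ establish that for $T$ large the bang arc is genuinely present or absent in a controlled way — this mirrors the $T>T^*$ discussion in Theorem~\ref{theo:opt 2} and uses the monotonicity comparison of Lemma~\ref{monotonu2} together with Proposition~\ref{prop:2 eq steady}. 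Once the structure is established, $\tilde J(u^*)=\bar U^2 t_0+\mu^2\int_{t_0}^T R^2$, and the non-increasing monotonicities in $T$ and $\bar U$ follow from the nesting of admissible sets exactly as before.
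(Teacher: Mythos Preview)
Your overall architecture (existence via weak-$*$ compactness and lower semicontinuity of $u\mapsto\int u^2$, saturation $F(T)=\varepsilon$, Lagrangian optimality conditions with switching function $2u^*+\lambda R$, then a structure argument on the bang set via the sign of $R''$) is the same as the paper's. There is, however, a genuine gap in your third block, and it is exactly the step the paper uses to short-circuit the difficulties you flag.

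You write $u^*=\min(\bar U,\max(0,-\mu R))$ and then try to control only the set $\{u^*=\bar U\}$, worrying that the threshold depends on the unknown multiplier. What you are missing is the elementary observation that $R<0$ on $[0,T)$. From \eqref{eq:dual3} one has
\[
R' \;=\; -\frac{\partial f}{\partial M_s}(F,M_s)\,Q \;+\;\delta_s R \;>\; \delta_s R,
\]
because $Q>0$ (Lemma~\ref{lemma:Q,R}) and $\partial f/\partial M_s<0$ for $F>0$. Since $R(T)=0$, the function $t\mapsto R(t)e^{-\delta_s t}$ is strictly increasing and vanishes at $T$, hence $R<0$ on $[0,T)$. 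Plugging this into your optimality conditions immediately gives $\{u^*=0\}=\emptyset$: on that set one would need $\lambda R\geqslant 0$, impossible with $\lambda>0$ and $R<0$. Consequently $u^*=-\mu R$ holds automatically wherever $u^*<\bar U$, and the structure reduces to locating the single set $\{-\mu R\geqslant \bar U\}=\{\bar U+\lambda R\leqslant 0\}$.

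Once you have $R<0$, your worries about $\mu$ also dissolve: one argues directly on $R$, not on $\mu R$. By Lemma~\ref{lemma:Q,R} and $|\partial f/\partial M_s|=\operatorname{O}(F)$, $R$ is bounded on $[0,T]$ by a constant independent of $T,\bar U$; then the computation of $-R''$ on $\{u^*=\bar U\}$ (formula \eqref{eq:R''2}) shows that for $\bar U$ large the term $\frac{\partial^2 f}{\partial M_s^2}\bar U\,Q$ dominates, so any interior extremum of $R$ there is a maximum, forcing $\{\bar U+\lambda R<0\}$ to be a single interval containing $0$ (since $R(T)=0$ excludes a component near $T$). This is precisely Lemma~\ref{lemma:bar U 2 eq}(ii), and no control on $\lambda$ is needed.
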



\begin{remark}
Similarly to the $L^1$ case, we have reduced the infinite dimensional  control problem to finite dimensional one, and  we therefore only need to determine $\mu$ and the value of $Q$ and $R$ at time $0$ to see $(F,M_s,Q,R)$ as the solution of a well-posed Cauchy problem.
\end{remark}
To prove Theorem \ref{theo: L2}, we first need the equivalent of Lemma \ref{lemma:cond_opt 2eq}.
\begin{lemma}\label{lemma:L2}
Consider  $u^*$ a solution to Problem~\eqref{eq:opt3}.
Then there exists $\lambda> 0$ such that
\begin{equation*}
\left\{\begin{array}{l}
\text{a.e. on }\{u^*=0\}\text{, one has } u^*(t)+\lambda R(t)\geqslant 0,\\
\text{a.e. on }\{0<u^*<\overline{U}\}\text{, one has } u^*(t)+\lambda R(t)=0,\\
\text{a.e. on }\{u^*=\overline{U}\}\text{, one has } u^*(t)+\lambda R(t)\leqslant 0.
\end{array}\right.
\end{equation*}
\end{lemma}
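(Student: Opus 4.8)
The plan is to mimic exactly the proof of Lemma~\ref{lemma:cond_opt 4eq} (equivalently Lemma~\ref{lemma:cond_opt 2eq}), replacing the linear cost $J(u)=\int_0^T u$ by the quadratic cost $\tilde J(u)=\int_0^T u^2$. The only structural difference is that the Gâteaux derivative of $\tilde J$ in a direction $h$ is now $D\tilde J(u)\cdot h = 2\int_0^T u(t)h(t)\,dt$ instead of $\int_0^T h$, so the switching function $1+\lambda R$ is replaced by $2u^*+\lambda R$, i.e., after rescaling $\lambda$, by $u^*+\lambda R$.

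\textbf{Key steps.} First I would introduce the Lagrangian
\[
\tilde{\mathcal{L}}:(u,\lambda)\in\mathcal{U}_{T,\overline{U}}\times\RR\mapsto \tilde J(u)-\lambda(F(T)-\varepsilon),
\]
where $\mathcal{U}_{T,\overline{U}}=\{u\in L^\infty(0,T):0\leqslant u\leqslant\overline{U}\}$, and invoke the standard Lagrange multiplier argument (exactly as in the proof of Lemma~\ref{lemma:cond_opt 4eq}) to produce $\lambda\geqslant 0$ with $D_u\tilde{\mathcal{L}}(u^*,\lambda)\cdot h\geqslant 0$ for every $h$ in the tangent cone to $\mathcal{U}_{T,\overline{U}}$ at $u^*$; note that the analogue of Lemma~\ref{lemma:satur} (saturation of the constraint, $F(T)=\varepsilon$), which holds for \eqref{eq:opt3} by the same monotonicity argument as for \eqref{eq:opt2}, is what guarantees the constraint is active. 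Next, using Lemma~\ref{lemma:gat 4eq} (whose conclusion $DG(u)\cdot h=\int_0^T h S$ for System~\eqref{eq:S1}, or its two-dimensional analogue giving $\int_0^T h R$ via System~\eqref{eq:dual3}, is exactly what is needed here since $G(u)=F(T)$), one gets $D_u\tilde{\mathcal{L}}(u^*,\lambda)\cdot h = \int_0^T (2u^*(t)+\lambda R(t))h(t)\,dt$. Then I would run the three Lebesgue-density-point arguments verbatim: take $t^*$ a density-one point of $\{u^*=0\}$, shrink measurable sets $H_n\subset\{u^*=0\}$ to $\{t^*\}$, use the admissible one-sided perturbation $h=\mathds{1}_{H_n}$ (for which $u^*+\eta h\in\mathcal{U}_{T,\overline{U}}$ for small $\eta>0$), divide by $\eta$ and then by $|H_n|$, and let $n\to\infty$ to obtain $2u^*(t)+\lambda R(t)\geqslant 0$ a.e.\ on $\{u^*=0\}$; the point $\{u^*=\overline{U}\}$ is handled with perturbations $u^*-\eta h$, $h\geqslant 0$ supported in $\{u^*=\overline{U}\}$; the point $\{0<u^*<\overline{U}\}$ with bilateral perturbations $u^*\pm\eta h$ supported there, yielding equality. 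Finally, absorb the factor $2$ into $\lambda$. The strict positivity $\lambda>0$ follows as in Lemma~\ref{lemma:cond_opt 4eq}: if $\lambda=0$ then $u^*(t)\geqslant 0$ and $u^*(t)\leqslant 0$ a.e., so $u^*\equiv 0$, forcing $F\equiv\bar F>\varepsilon$ on $(0,T)$, contradicting the saturation $F(T)=\varepsilon$.

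\textbf{Main obstacle.} There is essentially no real obstacle: every ingredient (Fréchet differentiability of $u\mapsto F(T)$, the adjoint computation, the Lebesgue density / needle-variation machinery, the saturation lemma) is already established in the excerpt and transfers verbatim. The only point requiring a word of care is checking that the perturbed controls remain admissible — i.e., that $h=\mathds{1}_{H_n}$ (resp. its negatives and bilateral versions) belongs to the tangent cone $\mathcal{T}_{u^*,\mathcal{U}_{T,\overline{U}}}$, which is immediate from the box structure $0\leqslant u\leqslant\overline{U}$ — and making sure the constraint $F(T)\leqslant\varepsilon$ is genuinely active so that the multiplier rule applies with an equality constraint; this is precisely why one first proves the saturation analogue of Lemma~\ref{lemma:satur} for Problem~\eqref{eq:opt3}. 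Hence the proof is a routine adaptation, and I would simply write ``the proof follows the lines of the proof of Lemma~\ref{lemma:cond_opt 4eq}, replacing $J$ by $\tilde J$ and hence the switching function $1+\lambda R$ by $2u^*+\lambda R$; rescaling $\lambda$ gives the announced form.''
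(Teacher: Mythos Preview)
Your proposal is correct and matches the paper's approach exactly: the paper simply states ``The proof is similar to Lemma~\ref{lemma:cond_opt 2eq} and will be omitted.'' Your write-up in fact supplies more detail than the paper does, but the route (Lagrangian, Lebesgue density-point perturbations, and the $\lambda>0$ argument) is the intended one.
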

The proof is similar to Lemma \ref{lemma:cond_opt 2eq} and will be omitted.

\begin{proof}[Proof of Theorem \ref{theo: L2}]
  From \eqref{eq:dual3}, we deduce that
  $$
  R' = -\frac{\partial f}{\partial M_s} Q + \delta_s R > \delta_s R,
  $$
  where we use the fact that $Q>0$ and $\partial f/\partial M_s<0$. Since one has $R(T)=0$, we infer that $R<0$ in $[0,T)$ by using a standard Gronwall argument {(see e.g. \cite{gronwall1919note})}.
According to Lemma~\ref{lemma:L2}, one has $\{u^*=0\}=\emptyset$.

According to Lemma~\ref{lemma:Q,R}, since $0\leqslant \partial f/\partial M_s(M_s,F)\leqslant \operatorname{O}(|F|)$, one infers that $R$ is uniformly bounded on $[0,T]$, by a constant that does not depend on $\bar U$ and $T$. Then, by adapting the proof of Lemma~\ref{lemma:bar U 2 eq}, one shows that for $\overline{U}$ large enough, on any open interval of the open set $\{\bar U+\lambda R<0\}$, any local extremum of $R$ is a maximum.
It follows that $\{\bar U+\lambda R<0\}$ has at most one connected component of the form $(0,t_0)$, which leads to the conclusion.
\end{proof}

We provide on Figure~\ref{fig:steril L2}  the solutions of Problem~\eqref{eq:opt3} with  $T=200$, $\overline{U}=4000$, $\nu_E=0.05$ and $\varepsilon=\bar F/4$.
 We recover the theoretical result above, namely that the optimal control $u^*$ is positive, continuous and $u^*(T)=0$.

\begin{figure}[H]
\begin{center}~\hfill
\begin{tikzpicture}[thick,scale=0.75, every node/.style={scale=1.0}] \begin{axis}[xlabel=t,
,legend pos=north east, legend columns=1]
 \addplot[color=red]coordinates { 
(0.0,11037.970588)
(1.0,10846.685132)
(2.0,10624.359076)
(3.0,10422.523841)
(4.0,10240.213967)
(5.0,10076.399584)
(6.0,9930.007328)
(7.0,9799.9257675)
(8.0,9685.015907)
(9.0,9584.1265144)
(10.0,9496.1084184)
(11.0,9419.824728)
(12.0,9354.1636291)
(13.0,9298.0490432)
(14.0,9250.4536441)
(15.0,9210.396167)
(16.0,9176.9617599)
(17.0,9149.291716)
(18.0,9126.5904219)
(19.0,9108.1274663)
(20.0,9093.2364274)
(21.0,9081.3139342)
(22.0,9071.8172587)
(23.0,9064.2583509)
(24.0,9058.2044062)
(25.0,9053.2726697)
(26.0,9049.1261121)
(27.0,9045.4694459)
(28.0,9042.0454575)
(29.0,9038.6321094)
(30.0,9035.0412122)
(31.0,9031.1084234)
(32.0,9026.6910272)
(33.0,9021.6678926)
(34.0,9015.9378234)
(35.0,9009.4167541)
(36.0,9002.0349623)
(37.0,8993.7344613)
(38.0,8984.4679145)
(39.0,8974.2012903)
(40.0,8962.9079274)
(41.0,8950.5681269)
(42.0,8937.1683458)
(43.0,8922.7003745)
(44.0,8907.1595044)
(45.0,8890.5467452)
(46.0,8872.8666465)
(47.0,8854.1273293)
(48.0,8834.3386261)
(49.0,8813.5103771)
(50.0,8791.6584198)
(51.0,8768.7980161)
(52.0,8744.9443594)
(53.0,8720.11502)
(54.0,8694.3278467)
(55.0,8667.6027102)
(56.0,8639.9601164)
(57.0,8611.4183958)
(58.0,8581.9938963)
(59.0,8551.7059624)
(60.0,8520.5778256)
(61.0,8488.6325872)
(62.0,8455.8921929)
(63.0,8422.3767699)
(64.0,8388.1052922)
(65.0,8353.0952304)
(66.0,8317.3563577)
(67.0,8280.9091841)
(68.0,8243.7781277)
(69.0,8205.9937862)
(70.0,8167.5783252)
(71.0,8128.5516144)
(72.0,8088.9331338)
(73.0,8048.7434651)
(74.0,8008.0019989)
(75.0,7966.7294806)
(76.0,7924.9447503)
(77.0,7882.6654442)
(78.0,7839.907586)
(79.0,7796.6925086)
(80.0,7753.0369656)
(81.0,7708.9585864)
(82.0,7664.4750376)
(83.0,7619.6032644)
(84.0,7574.3600503)
(85.0,7528.7601083)
(86.0,7482.8203718)
(87.0,7436.5587883)
(88.0,7389.9922107)
(89.0,7343.136518)
(90.0,7296.0080032)
(91.0,7248.6216012)
(92.0,7200.9886967)
(93.0,7153.123025)
(94.0,7105.0426887)
(95.0,7056.764907)
(96.0,7008.3032424)
(97.0,6959.6694545)
(98.0,6910.8782627)
(99.0,6861.9424468)
(100.0,6812.8764442)
(101.0,6763.6936152)
(102.0,6714.4062182)
(103.0,6665.0255542)
(104.0,6615.5639133)
(105.0,6566.0339339)
(106.0,6516.4476736)
(107.0,6466.8171408)
(108.0,6417.1586374)
(109.0,6367.48644)
(110.0,6317.8050645)
(111.0,6268.1224221)
(112.0,6218.4493366)
(113.0,6168.7965921)
(114.0,6119.1746119)
(115.0,6069.5933358)
(116.0,6020.0625247)
(117.0,5970.5927789)
(118.0,5921.2009827)
(119.0,5871.8891421)
(120.0,5822.6636812)
(121.0,5773.5327495)
(122.0,5724.5047123)
(123.0,5675.5883925)
(124.0,5626.7918842)
(125.0,5578.1219731)
(126.0,5529.5856958)
(127.0,5481.1891115)
(128.0,5432.9389189)
(129.0,5384.8425447)
(130.0,5336.9073711)
(131.0,5289.1408371)
(132.0,5241.5502299)
(133.0,5194.1449445)
(134.0,5146.9318921)
(135.0,5099.9144181)
(136.0,5053.0969054)
(137.0,5006.4832663)
(138.0,4960.074299)
(139.0,4913.8749336)
(140.0,4867.8917619)
(141.0,4822.1323852)
(142.0,4776.6020035)
(143.0,4731.3041537)
(144.0,4686.2401799)
(145.0,4641.4137449)
(146.0,4596.8288646)
(147.0,4552.4893459)
(148.0,4508.3991434)
(149.0,4464.5625331)
(150.0,4420.9829494)
(151.0,4377.6634212)
(152.0,4334.6069768)
(153.0,4291.816543)
(154.0,4249.2952829)
(155.0,4207.0448825)
(156.0,4165.0678061)
(157.0,4123.366435)
(158.0,4081.9431306)
(159.0,4040.8005572)
(160.0,3999.9412896)
(161.0,3959.3675188)
(162.0,3919.0822612)
(163.0,3879.0863143)
(164.0,3839.3821319)
(165.0,3799.9712651)
(166.0,3760.8541235)
(167.0,3722.033356)
(168.0,3683.5125012)
(169.0,3645.2947445)
(170.0,3607.382912)
(171.0,3569.7814019)
(172.0,3532.4945953)
(173.0,3495.5276735)
(174.0,3458.886538)
(175.0,3422.5776583)
(176.0,3386.6087674)
(177.0,3350.9891037)
(178.0,3315.7300057)
(179.0,3280.8445223)
(180.0,3246.3476896)
(181.0,3212.2582336)
(182.0,3178.5983447)
(183.0,3145.3950942)
(184.0,3112.6807367)
(185.0,3080.4947114)
(186.0,3048.8847096)
(187.0,3017.9090936)
(188.0,2987.6396465)
(189.0,2958.1656723)
(190.0,2929.5975402)
(191.0,2902.0734112)
(192.0,2875.7680678)
(193.0,2850.908435)
(194.0,2827.7904761)
(195.0,2806.8099243)
(196.0,2788.5094893)
(197.0,2773.656268)
(198.0,2763.3890805)
(199.0,2759.4930142)

 };
 
   \addplot[dashed,color=black]coordinates { 
  (0,2759.492647058823)
  (200,2759.492647058823)
 }; 
 
 \legend{Optimizer $F$ to \eqref{eq:opt3},$\varepsilon$}
\end{axis} 
\end{tikzpicture}
\hfill
\begin{tikzpicture}[thick,scale=0.75, every node/.style={scale=1.0}] \begin{axis}[xlabel=t,
legend style={at={(0.82,0.8)}}
]
 \addplot[color=red]coordinates { 
(0.0,0.0)
(1.0,0.0)
(2.0,47.207123427)
(3.0,38.384883769)
(4.0,44.629604568)
(5.0,47.011776543)
(6.0,50.520323795)
(7.0,53.941107198)
(8.0,57.439811417)
(9.0,60.868451356)
(10.0,64.380356636)
(11.0,67.868684078)
(12.0,71.437211621)
(13.0,74.770124808)
(14.0,78.470932919)
(15.0,81.757383562)
(16.0,85.315938553)
(17.0,88.840975845)
(18.0,92.363876487)
(19.0,95.899964753)
(20.0,99.43724456)
(21.0,102.94582493)
(22.0,106.55328648)
(23.0,110.10914912)
(24.0,113.68807914)
(25.0,117.2842308)
(26.0,120.90692204)
(27.0,124.55051213)
(28.0,128.22923389)
(29.0,131.826312)
(30.0,135.55474845)
(31.0,139.37228138)
(32.0,143.23358501)
(33.0,147.12639084)
(34.0,151.06423225)
(35.0,155.06253354)
(36.0,159.13627322)
(37.0,163.31731742)
(38.0,167.44234149)
(39.0,171.68466476)
(40.0,175.98501514)
(41.0,180.36156196)
(42.0,184.7938308)
(43.0,189.35106163)
(44.0,193.91807382)
(45.0,198.57398505)
(46.0,203.27868261)
(47.0,208.05413944)
(48.0,213.01463183)
(49.0,217.87977611)
(50.0,222.90725022)
(51.0,228.04790381)
(52.0,233.20918521)
(53.0,238.49375086)
(54.0,243.80802482)
(55.0,249.18967845)
(56.0,254.71459492)
(57.0,260.41275574)
(58.0,266.12803036)
(59.0,271.81317726)
(60.0,277.59769582)
(61.0,283.47995304)
(62.0,289.48944574)
(63.0,295.635546)
(64.0,301.80530109)
(65.0,308.51249136)
(66.0,314.83920151)
(67.0,321.30853319)
(68.0,327.4792921)
(69.0,334.0975239)
(70.0,340.77943979)
(71.0,347.58476188)
(72.0,354.39479728)
(73.0,361.3762491)
(74.0,368.33291604)
(75.0,375.47206913)
(76.0,382.67874758)
(77.0,390.14170081)
(78.0,397.34369682)
(79.0,404.92641559)
(80.0,412.48536065)
(81.0,420.14163007)
(82.0,427.92070243)
(83.0,435.74648193)
(84.0,443.80244397)
(85.0,451.82212793)
(86.0,459.88769115)
(87.0,468.07644548)
(88.0,476.39385648)
(89.0,484.75519894)
(90.0,493.21993877)
(91.0,501.99156006)
(92.0,510.75728057)
(93.0,519.3779811)
(94.0,528.11498137)
(95.0,537.07818373)
(96.0,546.27138511)
(97.0,555.29704341)
(98.0,564.62144183)
(99.0,573.85441053)
(100.0,583.26059351)
(101.0,592.77429863)
(102.0,602.45264994)
(103.0,612.14344309)
(104.0,621.90421767)
(105.0,631.76170599)
(106.0,641.78207842)
(107.0,651.55057926)
(108.0,661.35432579)
(109.0,672.03185455)
(110.0,682.45417951)
(111.0,692.83931817)
(112.0,703.32774753)
(113.0,713.89730932)
(114.0,724.59010467)
(115.0,735.30860024)
(116.0,746.3071181)
(117.0,756.33762884)
(118.0,768.0689493)
(119.0,779.18348432)
(120.0,790.41935699)
(121.0,801.71526795)
(122.0,813.03181184)
(123.0,824.4535745)
(124.0,836.08362201)
(125.0,847.70403939)
(126.0,859.53803489)
(127.0,871.37831939)
(128.0,883.22583331)
(129.0,895.161538)
(130.0,907.11202165)
(131.0,919.22247788)
(132.0,931.05614372)
(133.0,943.14670507)
(134.0,955.69201884)
(135.0,968.13301577)
(136.0,980.60667004)
(137.0,993.71545061)
(138.0,1006.3225664)
(139.0,1018.9717868)
(140.0,1031.4233894)
(141.0,1044.2314067)
(142.0,1057.121099)
(143.0,1070.4775515)
(144.0,1083.5159822)
(145.0,1096.6451403)
(146.0,1109.8404678)
(147.0,1123.0751261)
(148.0,1136.2634186)
(149.0,1149.6126788)
(150.0,1163.0381862)
(151.0,1176.4813774)
(152.0,1190.0171759)
(153.0,1203.4470565)
(154.0,1217.2356101)
(155.0,1230.8526486)
(156.0,1244.5617476)
(157.0,1258.2983255)
(158.0,1272.0052412)
(159.0,1285.7038925)
(160.0,1299.5758282)
(161.0,1313.0938581)
(162.0,1327.2396978)
(163.0,1340.8924056)
(164.0,1354.7439272)
(165.0,1368.9159684)
(166.0,1382.6014992)
(167.0,1396.1543987)
(168.0,1409.6787613)
(169.0,1423.3096052)
(170.0,1436.4352466)
(171.0,1449.5753674)
(172.0,1462.3806429)
(173.0,1474.8865128)
(174.0,1487.1211161)
(175.0,1498.8544866)
(176.0,1510.0537145)
(177.0,1520.4499142)
(178.0,1530.0750333)
(179.0,1538.8767326)
(180.0,1546.2100976)
(181.0,1552.2119333)
(182.0,1556.2128792)
(183.0,1558.174475)
(184.0,1557.2669416)
(185.0,1553.1712179)
(186.0,1544.9770811)
(187.0,1531.8926701)
(188.0,1512.4913659)
(189.0,1485.8769007)
(190.0,1450.0655233)
(191.0,1403.3329504)
(192.0,1341.8666702)
(193.0,1263.1959776)
(194.0,1162.2420458)
(195.0,1032.8605949)
(196.0,867.92738209)
(197.0,654.41725342)
(198.0,377.1819081)
(199.0,11.254411888)
 };
 
  \addplot[dashed,color=black]coordinates { 
  (0,4000)
  (200,4000)
 }; 
 \legend{Optimizer $u$ to \eqref{eq:opt3}, $\bar U$}
\end{axis} 
\end{tikzpicture}\hfill~
\end{center}
\caption{Solution of the optimal control problems \eqref{eq:opt3} 
with  $T=200$, $\overline{U}=4000$, $\nu_E=0.05$ and $\varepsilon=\bar F/4$.}
\label{fig:steril L2}
\end{figure}

\begin{remark}
In optimal control theory, the $L^2$-norm  is often preferred to the $L^1$-norm for differentiability issues. However, from a biological point of view, the $L^1$-norm is more relevant since it stands for the amount of individuals.
Moreover, as it can be seen on Figure~\ref{fig:steril1}, the optimal control for the $L^1$-norm is sparse unlike the one for the $L^2$-norm, which is interesting from a practical point of view. 
\end{remark}

\subsection{Dual optimal  control problem}

Consider the optimal control problem
\begin{equation}\label{eq:opt4}
\underset{u\in \mathcal{U}_{T,\overline{U},C}^{(\mathcal{S}_1)}}{\mbox{inf}}\hat J(u),
\tag{\mbox{$\hat{\mathcal{P}}_{T,\overline{U},C}^{(\mathcal{S}_1)}$}}
\end{equation}
where the functional $\hat J$ stands for the total number of eggs and females (with some weights) at time $T$, namely
\begin{equation*}
\hat J(u):=F(T),
\end{equation*} 
where $F$ solves System~\eqref{eq:primal2} associated to the control $u$ and 
$\mathcal{U}_{T,\overline{U},C}^{(\mathcal{S}_1)}$ is the set of admissible controls, chosen so that:
\begin{itemize}
\item the rate of sterile male mosquito release is non-negative, uniformly bounded by a positive constant $\overline{U}$;
\item the total number of released sterilized males over the time interval $(0,T)$ is assumed to be lower than $C$.
\end{itemize}
Hence, $\mathcal{U}_{T,\overline{U},C}^{(\mathcal{S}_1)}$ is defined by
\begin{equation*}
{\mathcal U}_{T,\bar{U},C}^{(\mathcal{S}_1)} := \Big\{ u\in L^\infty(0,T):0\leqslant u \leqslant \overline{U} \text{ a.e. in }(0,T),\int_0^T u(t)\,dt\leqslant C\Big\}.
\end{equation*}

In \cite{MBE}, a similar optimal control problem has been considered.
Problem~\eqref{eq:opt4} can be seen as a dual version of 
\eqref{eq:opt2} in the following sense: let  $u^*\in \mathcal{U}_{T,\overline{U},\varepsilon}^{(\mathcal{S}_1)}$ be  a solution to Problem~\eqref{eq:opt2} for some given $T$, $\overline{U}$ and $\varepsilon$. 
Then, the control $u^*$ is a solution to Problem~\eqref{eq:opt4} for the parameter choice
$C:=\int_0^Tu^*(t)\,dt$. 
Indeed, assume by contradiction that 
there exists $u \in  \mathcal{U}_{T,\overline{U},C}^{(\mathcal{S}_1)}$ such that 
$$\hat J(u)<\hat J(u^*),$$
that is 
$F(T)<F^{*}(T)=\varepsilon$,
where $(F,M_s)$ and $(F^*,M_s^*)$ are the solutions to system \eqref{eq:primal2} associated to $u$ and $u^*$ respectively.
By mimicking the argument provided in the proof of Lemma~\ref{lemma:satur 4 eq}, 
we reach a contradiction, which shows that  $u^*$ is a solution to Problem~\eqref{eq:opt4}.

Respectively, let  $\hat u^*\in \mathcal{U}_{T,\overline{U},C}^{(\mathcal{S}_1)}$ be  an optimizer to Problem~\eqref{eq:opt4} for some given $T$, $\overline{U}$ and $C$. 
By using the same argument, one shows that $\hat u^*$ is an optimizer to Problem~\eqref{eq:opt2} for the parameter choice $\varepsilon:=F^*(T)$.

\section{Conclusion}

  In this paper, we have determined the optimal release function which minimizes the number of sterilized males needed when performing the sterile insect technique (SIT) to reduce the size of a population of mosquitoes to a given value.
  Starting from a differential system modeling the dynamics of the mosquito population, we simplify it to obtain a reduced system, which is a good approximation, and for which we are able to compute precisely the optimal solution. These theoretical results are illustrated thanks to some numerical simulations. Notice that once the form of the theoretical solution is known, efficient algorithms may be designed to compute quickly the numerical solution.

  Obviously, when the final number of mosquitoes is fixed, there is a minimal time to perform the releases in order to reach this value. Interestingly, the number of sterilized males needed is non-increasing with respect to the time of the experiment, meaning that the longer the duration of the experiment, the lower the number of sterilized males. However, there is a maximal time above which the minimal number of sterilized males needed is stationary. In this case, the optimal release function is given by a singular arc sandwiched between two regions where it is zero. The knowledge of the existence of this time may be interesting for practical applications since for larger time the number of sterilized males stays constant.

  Thanks to our results, we are able to give a precise description of the temporal distribution of the releases to optimize given scenarios. A natural extension of this work is to add the spatial distribution of mosquitoes since it may have a big impact on the success of the SIT ({we refer the interested reader to \cite{Mosquitocemracs2018,CoCV}} for some simple spatial models). In particular, most experiments have been performed in isolated regions to avoid re-invasion from the outside. But even in isolated regions the question of knowing where to perform the releases to have the best efficiency of the SIT is still open.

\appendix
\begin{center}
\fbox{\Large{\bf Appendix}}
\end{center}
\section{Mathematical properties of the dynamical systems}\label{sec:mathpropdynsys}

\subsection*{Proof of Proposition \ref{prop:4 eq steady}}
Let us assume that $u(\cdot)=0$. Then, the equilibria $(\bar E,\bar M,\bar F,\bar M_s)$ of System~\eqref{eq:S1} solve
\begin{multline*}
0 = \beta_E \bar F \left(1-\frac{\bar E}{K}\right) - \big( \nu_E + \delta_E \big)\bar  E= (1-\nu)\nu_E\bar  E - \delta_M\bar  M \\
= \nu\nu_E\bar  E \frac{\bar M}{\bar M+\gamma_s\bar  M_s} - \delta_F\bar  F= - \delta_s\bar  M_s.
\end{multline*}
We thus infer that $\bar{M}_s=0$ and 
\begin{equation*}
0 = \beta_E \bar F \left(1-\frac{\bar E}{K}\right) - \big( \nu_E + \delta_E \big)\bar  E= (1-\nu)\nu_E\bar  E - \delta_M\bar  M= 
\nu\nu_E\bar  E  - \delta_F\bar  F.
\end{equation*}
Then, $(0,0,0,0)$ is an equilibrium, and the only non-zero equilibrium is 
\begin{equation*}
\bar E=K \left(1-\frac{\delta_F\big( \nu_E + \delta_E \big)}{\beta_E \nu\nu_E}\right),\quad 
\bar  M=\frac{(1-\nu)\nu_E}{\delta_M}\bar  E ,  \quad
\bar  F=\frac{\nu\nu_E}{\delta_F}\bar  E  , \quad \bar M_s=0
\end{equation*}
whence \eqref{eq:equi}.

 Let us show that $(0,0,0,0)$ is unstable. Using that  
 $M_s(t)=e^{-\delta_st}M_{s}(0)$ for $t\geqslant0$, we deduce that $M(t)\geqslant e^{-\delta_M t}M(0)$ for $t\geqslant0$ according to \eqref{eq:cond prop}, and it follows that for any $\epsilon>0$, there exists $t^*>0$ such that 
 $ \gamma_s M_s(t)<\epsilon M(t)$  for all $t\geqslant t^*$. By using a standard comparison principle, we get that for all $t\geqslant t^*$ 
 \begin{equation*}
(E(t), F(t))\geqslant (E_1(t),F_1(t)),
 \end{equation*}
 the inequality being understood component by component, where, $(E_1,F_1)$ solves
 \begin{equation}
  \left\{
    \begin{aligned}
&\frac{dE_1}{dt}  =  \beta_E F_1 \left(1-\frac{E_1}{K}\right) - \big( \nu_E + \delta_E \big) E_1, \quad t\geqslant t^* \\
&\frac{dF_1 }{dt} =  \frac{\nu\nu_E }{1+\epsilon}E_1 - \delta_F F_1
\end{aligned}
  \right.
  \label{eq:F1}
\end{equation}
complemented with the initial data $(E_1(t^*),F_1(t^*))=(E(t^*),F(t^*))$.
An easy computation yields that the Jacobian matrix of System~\eqref{eq:F1} at $(0,0)$ is
$$
\begin{pmatrix}
- \nu_E - \delta_E &\beta_E  \\
\frac{\nu\nu_E }{1+\epsilon}& - \delta_F 
\end{pmatrix},
$$
whose determinant expands as $\delta_F(\nu_E+\delta_E)-\nu\beta_E \nu_E +\operatorname{O}(\epsilon)$.
According to \eqref{eq:cond prop}, we infer that the Jacobian matrix has a positive root whenever $\epsilon$ is chosen small enough, which leads to the conclusion.

Let us now investigate the stability of $(\bar E,\bar M,\bar F,0)$ for System~\eqref{eq:S1}. Easy computations yield that the Jacobian matrix of System~\eqref{eq:S1} at $(\bar E,\bar M,\bar F,0)$ reads
\begin{multline*}
 \begin{pmatrix}
-\frac{\beta_E}{K}\bar F-(\nu_E+\delta_E) &0& \beta_E\left(1-\frac{\bar E}{K}\right)&0  \\
(1-\nu)\nu_E&-\delta_M&0&0\\
\nu\nu_E &0 & -\delta_F &-\frac{\gamma_s\nu\nu_E\bar E}{\bar M}\\
0&0&0&-\delta_s
\end{pmatrix}\\=  \begin{pmatrix}
-\frac{\nu\nu_E\beta_E}{\delta_F} &0&\frac{\delta_F(\nu_E+\delta_E)}{\nu\nu_E}&0  \\
(1-\nu)\nu_E&-\delta_M&0&0\\
\nu\nu_E &0 & -\delta_F &-\frac{\gamma_s\nu\delta_M}{1-\nu}\\
0&0&0&-\delta_s
\end{pmatrix}.
\end{multline*}
so that the four eigenvalues are $-\delta_s$, $-\delta_M$, and the two (complex conjugate) roots of the polynomial
$$
P=X^2+\left(\frac{\nu\nu_E\beta_E}{\delta_F}+\delta_F\right)X+(\nu\nu_E\beta_E-\delta_F(\nu_E+\delta_E)), 
$$
which have a negative real part under \eqref{eq:cond prop}.
It follows that $(\overline{E},\overline{M}, \overline{F},0)$ is {locally} asymptotically stable.

Let us now prove the second part of the proposition. We first notice that the set $[0,K]\times\RR_+^3$ is stable whenever $u$ is non-negative. We claim that System~\eqref{eq:S1} is monotone on this set (see Lemma~\ref{monotonu}). 
 If $u$ belongs to $L^\infty(0,T,\RR_+)$, then $(0,0,0,0)$ is obviously a subsolution of System~\eqref{eq:S1} whereas $(\bar{E},\bar{M},\bar{F},\|u\|_\infty/\delta_s)$ is a supersolution. A comparison argument allows us to conclude that $[0,\bar{E}]\times [0,\bar{M}]\times [0,\bar{F}]\times \RR_+$ is stable. Furthermore, if all initial data are positive, then so are the functions $E$, $M$, $F$ and $M_s$, and we get that 
 $$
 E(t)\geqslant e^{-(\nu_E+\delta_E)t} E(0), \quad M(t) \geqslant e^{-\delta_M t} M(0), \quad F(t)\geqslant e^{-\delta_F t} F(0)
 $$ 
 for all $t\geq 0$, implying that these quantities cannot vanish.

  Finally, let us consider the case where $u(\cdot)=\bar{U}$, where $\bar{U}>U^*$. In that case, the non-zero equilibria of System~\eqref{eq:S1} solve
  $$
  \bar{M_s} = \frac{\bar{U}}{\delta_S}, \quad
  \bar{M} = \frac{(1-\nu)\nu_E}{\delta_M} \bar{E}, \quad
  \bar{F} = \frac{\nu \nu_E}{\delta_F} \frac{\bar{E}\,\bar{M}}{\bar{M}+\gamma_s \bar{M}_s},
 $$and$$
  0 = \beta_E \bar{F}\left(1-\frac{\bar{E}}{K}\right) - (\nu_E+\delta_E)\bar{E}
  $$
plugging the three first above equalities into this latter equation, we get that $\bar{E}$ satisfies
$$
  \bar{E}= 0$$or$$
  \frac{\beta_E\nu(1-\nu)\nu_E^2}{\delta_F \delta_M K} \bar{E}^2 -
  \frac{\beta_E\nu(1-\nu)\nu_E^2}{\delta_F \delta_M}\left(1-\frac{\delta_F(\nu_E+\delta_E)}{\beta_E \nu \nu_E}\right) \bar{E} + \frac{\gamma_s (\nu_E+\delta_E)}{\delta_s}\bar{U}=0.
$$
One easily checks that this second order polynomial with unknown $\bar{E}$ does not have any real solutions if $\bar{U}>U^*$. In this case, the only equilibrium is the extinction equilibrium. We infer that any non-negative solution to the Cauchy problem converges to this unique steady state.
\qed

\subsection*{Proof of Proposition \ref{prop:2 eq steady}}
Let us assume that $u(\cdot)=\bar{U}$. The equilibria are obtained by solving the system
 \begin{equation*}
 0 = f(\bar{F},\bar{M}_s) = \bar{U} - \delta_s \bar{M}_s,
\end{equation*}
which is equivalent to $\bar{M}_s=\bar{U}/\delta_s$ and 
  \begin{multline*}
    \bar{F}\left(
    \nu(1-\nu)\beta_E^2\nu_E^2 \bar{F} - \delta_F \big(\frac{\beta_E \bar{F}}{K} + \nu_E+\delta_E \big)
    \Big((1-\nu)\nu_E\beta_E \bar{F} \right.\\\left.+ \delta_M \gamma_s \bar{M_s} \big(\frac{\beta_E \bar{F}}{K} + \nu_E+\delta_E \big) \Big)
  \right) = 0.
  \end{multline*}
It follows that if $\bar{U} = 0$, then there are exactly two different solutions for this equation, namely $\bar{F}=0$ or $\bar{F}$ given by \eqref{eq:equi}.
 A straightforward computation yields that if $\bar{U}>U^*$, then the equation above has no positive solution and furthermore, $f(F,\bar{M_s})<0$ for every $F>0$. Therefore, we infer that any non-negative solution converges to the steady state $(0,\bar{M_s})$ if $\bar{U}>U^*$.

Let us assume that $u(\cdot)=0$. Using that 
$$
M_s(t)=e^{-\delta_st}M_{s}(0), \quad F(t)\geqslant e^{-\delta_F t}F({0})
$$ 
for $t\geqslant0$ and the fact that $\delta_s>\delta_F$, we get that, for all $\eta>0$, there exists $t^*>0$ such that 
 $$
\delta_M \gamma_s (\frac{\beta_E F(t)}{K} + \nu_E+\delta_E) M_s(t)<\eta (1-\nu)\nu_E \beta_E F(t) \quad \mbox{ for all }t\geqslant t^*.
 $$
 It follows that
$$
  f(F(t),M_s(t)) >  \tilde f(F(t)):=\frac{\nu\beta_E \nu_E F(t)}{\big(\frac{\beta_E F(t)}{K} + \nu_E+\delta_E \big)  (1+\eta)} - \delta_F F(t),
$$
whenever 
 $t\geqslant t^*$. 
 We conclude by observing that 
$\tilde f'(0)>0$ for $\eta$ small enough, 
 so that we get the instability of the equilibrium $(0,0)$. 

Finally, let us investigate the stability of the persistence steady state $(\bar{F},0)$ of System~\eqref{eq:primal2}. One computes
\begin{equation*}
\begin{array}{l}
\frac{\partial f}{\partial F}(\bar F,0)\\ = \nu(1-\nu)\beta_E^2 \nu_E^2 \frac{2 \bar{F}\big(\frac{\beta_E \bar{F}}{K} + \nu_E+\delta_E \big)(1-\nu)\nu_E \beta_E \bar{F}
 - \bar{F}^2(1-\nu)\nu_E \beta_E\big(2\frac{\beta_E \bar{F}}{K} + \nu_E+\delta_E \big)}
 {\big(\frac{\beta_E \bar{F}}{K} + \nu_E+\delta_E \big)^2 \big((1-\nu)\nu_E \beta_E \bar{F}\big)^2} - \delta_F \\[4mm]
   =
   \delta_F\Big({\frac{1}{\mathcal{R}_0}}
  - 1\Big),
\end{array}\end{equation*}
which is negative under condition \eqref{eq:cond prop}. Because of the form of System~\eqref{eq:primal2}, it follows that $(\overline{F},0)$ is a {locally} asymptotically stable steady state.
 Finally, a standard comparison argument for Cauchy problems yields that if $0<F(0)< \bar{F}$ and $u(\cdot)\geqslant 0$ then, we have $0<F(t)<\bar{F}$ for all $t\geqslant 0$.
\qed


\section*{Acknowledgements}

The authors acknowledge
the support of  the program STIC AmSud (project 20-STIC-05)
and of the Project ``Analysis and simulation of optimal shapes - application to life sciences'' of the Paris City Hall.

%
%



\bibliographystyle{abbrv}
\bibliography{biblio}

\end{document}